\newcolumntype{P}[1]{>{\centering\arraybackslash}p{#1}}
\newcolumntype{M}[1]{>{\centering\arraybackslash}m{#1}}
\DeclareMathOperator*{\argmax}{arg\,max}
\newtheorem{asm}{Assumption}
\newtheorem{rmk}{Remark}
\newtheorem{thm}{Theorem}
\newtheorem{lemma}{Lemma}
\newtheorem{clm}{Claim}
\newtheorem{propos}{Proposition}
\newtheorem{clr}{Corollary}
\numberwithin{equation}{section}
\numberwithin{propos}{section}
\numberwithin{rmk}{section}
\numberwithin{lemma}{section}
\numberwithin{clr}{section}
\numberwithin{Def}{section}
\newcommand{\PP}{\mathbb{P}}
\newcommand{\EE}{\mathbb{E}}
\newcommand{\NN}{\mathbb{N}}
\newcommand{\eps}{\varepsilon}
\newcommand{\la}{\lambda}
\let\@fnsymbol\@arabic
\begin{document}

\title{Optimal control, viscosity approximation and Arrhenius Law for the shallow lake problem}


\author{Angeliki Koutsimpela\footnote{Department of Mathematics, University of Augsburg, 86159 Augsburg, Germany} \, and Michail Loulakis\footnote{School of Applied Mathematical and Physical Sciences, National Technical University of Athens, 15780 Athens, Greece}$^{\; \; , }$\footnote{Institute of Applied and Computational Mathematics, Foundation for Research and Technology Hellas,
			 70013 Heraklion Crete, Greece}}



\maketitle

\begin{abstract}
We prove existence of optimal control for the deterministic and stochastic shallow lake problem without any restrictions on the parameter space and we establish a generalization of the Arrhenius Law in the case of noise-dependent potentials, which naturally arise in control theory problems. We also prove a result about convergence of the derivatives in the viscosity approximation of the value function and use this result to derive the Arrhenius Law for the shallow lake problem. 
\end{abstract}

\section{Introduction}

The shallow lake problem is a well-known problem of the environmental  economy with a great mathematical interest. Pollution of shallow lakes is caused by human activity, e.g. the use of fertilizers and the increased inflow of waste water from industries and human settlements, and is usually quantified by the concentration of phosphorus into the lake.   The amount of phosphorus in algae is usually modeled by the non-linear stochastic differential equation:
\begin{equation} \label{sldyn}
	\begin{cases} 
		dx(t)= \left( u(t)-b x(t)+r(x(t))\right) dt+\sigma x(t)dW_t, \\
		x(0)= x\ge 0.  
	\end{cases}
\end{equation}
The first term, $u:\; [0,\infty)\rightarrow (0,\infty)$, in the drift part of the dynamics, represents the exterior load of phosphorus as a result of human activities. The second term is the rate of loss $bx(t)$, which is due to sedimentation, outflow and sequestration in other biomass. The third term, $r(x(t))$, is the rate of recycling of phosphorus on the bed of the lake. The function $r$ according to limnologists is best described by a sigmoid function (see \cite{carp99}) and the typical choice in the literature is the function $x\mapsto x^2/(x^2+1)$. An uncertainty in the rate of loss is inserted in the model through a linear multiplicative Gaussian white noise with intensity $\sigma \neq 0$.

The economics of the lake arise from the conflicting services it offers to the community. On the one hand, a clear lake is a resource for ecological services. On the other hand, the lake serves as a waste sink for the agricultural and industrial activities. When the users of the lake cooperate, the loading strategy can be used as a control to maximize the benefit from the lake. Assuming an infinite horizon, this benefit is typically defined as
\begin{equation}\label{Jxu}
	J(x;u)=\EE_x \left[\int_0^\infty e^{-\rho t}\big(\ln u(t)-cx^2(t)\big)\, dt \right],
\end{equation}
where $\rho >0$ is the discount rate and $x(\cdot)$ is the solution to (\ref{sldyn}), for a given exterior load (control)  $u$, and  a given initial state $x\geq 0$. The total benefit of the lake increases with the increase of loading of phosphorus as $\ln u(t)$, but at the same time decreases with the amount of phosphorus inside the lake as $-cx^2(t)$, due to the implied deterioration of its ecological services. The positive parameter $c$ reflects the relative weight of this component.

For the optimal management of the lake, when the initial concentration of phosphorus is $x$, we need to maximize the total benefit over all admissible controls $u\in\mathfrak{U}_x$. In this way, the value function of the problem is defined as 
\begin{equation}\label{ihvf}
	V(x) = \sup_{u\in \mathcal{U}_x}  J(x;u),
\end{equation}
\vskip.05in
Therefore, the shallow lake problem becomes a problem of control theory or a differential game in the case where we have competitive users of the lake \cite{Brock, carp99,  Xep}.

The shallow lake problem has been extensively studied in the literature, especially its deterministic version, where $\sigma=0$. The case where  the optimally controlled lake has two equilibria and a Skiba point \cite{skiba, W03} is of particular interest. 
The leftmost (oligotrophic) equilibrium corresponds to a lake with low concentration of phosphorus, while the rightmost one (eutrophic) corresponds to a lake with high concentration of phosphorus. At the Skiba point, there are two different optimal strategies, each one driving the system to a different equilibrium and the value function is not differentiable thereat. Extensive exploration of the parameter space and the qualitative differences of the Pontryagin system of the shallow lake (bifurcation analysis) has been conducted \cite{WBif, W03}. Properties of the value function have been proved in  \cite{KosZoh}.

A basic question that, to the best of our knowledge, has not been completely answered so far is that  of the existence of optimal control. The existence of optimal control is usually taken as a hypothesis and the optimal dynamics of the lake is studied mostly through the necessary conditions, which are determined by the Pontryagin Maximum Principle, and the equilibrium points of the corresponding dynamical system  \cite{W03, Xep}.  A rigorous answer to this question was given by Bartaloni in \cite{Bar20, Bar21}, albeit under restrictions that do not fully cover the range of the parameters for which  Skiba points are present. 

In addition, there has been increasing interest recently in the stochastic  ($\sigma\neq 0$) version of the problem.  Specifically, deterministic systems with two equilibrium points and one Skiba point have a fundamentally different behaviour  from their stochastic counterparts. In particular, in the presence of noise, random fluctuations lead the system from the one equilibrium point to the other one (metastability). In the case of a shallow lake, variations in the rate of loss drive the lake from the oligotrophic to the eutrophic state and vice versa, a phenomenon which is naturally observed.  The interest in the study of metastable systems firstly arose from phenomena in the field of chemistry. Arrhenius \cite{Arrh} in 1889 physically justified an expression for the mean transition time of the system to go from the one local minimum to the other. Later, H. Eyring and H. A. Kramers \cite{Eyr}, \cite{Kram} with the well-known Eyring-Kramers law specified the prefactor term in Arrhenius' expression. In the sequel, M.I.Freindlin and A.D.Wentzell \cite{FRWE} introduced the theory of large deviations to explain and understand the metastable behaviour of various dynamical systems. Even though,  metastable systems have been extensively studied ever since (see e.g. \cite{AH}, \cite{Be}) , the majority of results concern dynamical systems for which the drift function is not a function of the noise intensity.  However, in the context of (stochastic) control theory, one naturally expects that in metastable systems, like the shallow lake system in the case of Skiba points, the drift term of the optimally controlled system will on the value function, which in turn depends on the noise via the HJB correction. The phenomenon of metastability in the shallow lake problem is studied numerically in \cite{GKW}, where the value function of the shallow lake problem is approximated for small $\sigma$, based on heuristic methods of perturbation analysis. 

Furthermore, thorough examination of the stochastic version of the shallow lake problem is conducted by Kossioris, Loulakis and Souganidis \cite{KLS} who  analytically derive properties of the value function and characterise it as the unique (in a suitable class) state-constraint viscosity solution of the Hamilton-Jacobi-Bellman (HJB) equation
\begin{equation}\label{OHJBsup}
\rho V- H(x,V_x)-\frac{1}{2}\sigma^2 x^2 V_{xx}=0
\end{equation}
where $H(x,p)=\sup \limits_{u\in (0,\infty)}\tilde{H}(x,u,p):=\sup \limits_{u\in (0,\infty)} \{ \left( u-bx+r(x) \right)p+\ln u-cx^2 \}$.\\
\noindent 
Their results concern the usual choice of the recycling function $r(x)=\frac{x^2}{x^2+1}$ and the choice $c=1$ for the weight component appearing inside the total benefit. In the following, their results are extended in order to involve more general sigmoid functions $r$ and values of the parameter $c$ in \cite{KL}. Proving that the derivatives of the value function $V$ are actually negative and bounded away  from zero
 reduces eq. \eqref{OHJBsup} to the following form:
\begin{equation}\label{OHJB}
\rho V-\left(r(x)-bx\right)V_x+ \ln(-V_x)+cx^{2}+1 -\frac{1}
{2}\sigma^{2}x^{2}V_{xx},
\end{equation}

The connection of control theory problems with  HJB equations has been extensively studied, see e.g \cite{Bardi,FS,Flem,Lions2,Louis}. In particular, when the value function does not a priori possess  the regularity of a classical solution, the theory of viscosity solutions, as it was developed by Crandall and Lions \cite{Crandall}, offers a flexible framework which can also cover problems with state-constraints.

However, the shallow lake problem has some nonstandard features and, hence, it requires some special
analysis. First of all, the control space is open and unbounded, so  the usual assumptions  made to prove existence in control problems with infinite horizon  (see e.g.  \cite{Besov, Dmit, Balder}) are not satisfied here. In addition, a priori knowledge of the properties of the solution is necessary to guaranty that the Hamiltonian, $H$, is well-defined, due to the logarithmic term in \eqref{OHJB}.  Finally, in the case of the stochastic shallow lake problem, ellipticity of \eqref{OHJB} degenerates at the boundary, $x=0$.

In this paper, we prove existence of optimal control for both the stochastic and the deterministic shallow lake problems. In the presence of noise, the proof follows the general lines of a verification principle (see e.g. \cite{FS}) with appropriate modifications to address the loss of ellipticity at the boundary and a possible blow-up of the benefit for small controls. This approach is not always feasible in the deterministic problem, since the value function may fail to be everywhere differentiable. In \cite{Bar20} and \cite{Bar21} the existence of optimal control is established by proving uniform localization lemmas followed by diagonal arguments. This approach is successfully carried out under the assumption that either the parameter $b$ or the discount parameter $\rho$ are greater than $3\sqrt{3}/8$. Our approach here is entirely different and does not require any restrictions on the parameter space. More specifically, we prove that both the value function and the total benefit achieved when the system is driven by the candidate optimal control suggested by Pontryagin Maximum Principle are viscosity solutions to the same well-posed problem.  In this way, the above-mentioned candidate optimal control is proved to be indeed optimal.
%
Finally, in the last part of the article, we prove a generalization of the Arrhenius' law in the framework of control theory problems with Skiba points, where we have a noise-dependent potential, and consider the shallow lake problem as an application of this result.


\section{The general setting and the main results}

\subsection{Existence of optimal control}

We assume that there exists a filtered probability space $( \Omega, \mathcal{F},\{ \mathcal{F}_t\}_{t\ge 0},\PP )$
satisfying the usual conditions, and a Brownian motion $W(\cdot)$ defined on that space. 
An admissible control   $u(\cdot)\in\mathfrak{U}_x$ is an $\mathcal{F}_t$-adapted, $\PP$-a.s. locally integrable process with values in $U=(0,\infty)$, satisfying 
\begin{equation}\label{ac_constraint}
	\mathbb{E} \left[ \int_{0}^{\infty}e^{-\rho t}\ln u(t)dt \right] < \infty,
\end{equation}
such that  the problem (\ref{sldyn}) has a unique strong solution $x(\cdot)$. Furthermore, throughout the paper, the recycling rate function $r$ is a sigmoid function satisfying Assumption \ref{r}.
\vskip.05in

\begin{asm}\label{r}
The rate of recycling $r$ satisfies the following:
\begin{enumerate}
	\item $r\in C^1([0,\infty))$ and non-decreasing
	\item  $r(0)=0$ and $r(x)\leq (b+\rho)x$ close to $0.$
	\item $a:=\lim \limits_{x\rightarrow \infty}r(x)<\infty$
	\item The limit $\lim \limits_{x\rightarrow \infty}(a-r(x))x \in \mathbb{R}=:C$ exists and is a finite, necessarily nonnegative, real number
    \item $\lim \limits_{x\rightarrow \infty}r'(x)=0$
\end{enumerate}
\end{asm}

\noindent 
Next, we present the main results regarding the existence of optimal control. 

\noindent

\begin{thm}\label{constrained_vs_det}
If $\sigma=0$, the value function $V$  is a continuous, constrained viscosity solution of equation \eqref{OHJB} on  $[0,\infty)$. Moreover,  $V$ satisfies \eqref{OHJB}  at $x=0$ in the classical sense. 
\end{thm}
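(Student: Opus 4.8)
The plan is to establish the three assertions separately: that $V$ is a viscosity subsolution on $[0,\infty)$, that it is a viscosity supersolution on $(0,\infty)$ together with the appropriate state-constraint boundary condition, and finally that at $x=0$ equation \eqref{OHJB} holds classically. Throughout I would work with the deterministic dynamics $\dot x(t)=u(t)-bx(t)+r(x(t))$ and the functional $J(x;u)=\int_0^\infty e^{-\rho t}(\ln u(t)-cx(t)^2)\,dt$, and I would assume available from the earlier analysis (the extension of \cite{KLS} in \cite{KL}) the a priori facts that $V$ is continuous on $[0,\infty)$, that $V_x$ exists wherever $V$ is differentiable and is strictly negative and bounded away from $0$, so that the logarithmic term $\ln(-V_x)$ in the Hamiltonian is well-defined, and that admissible controls can be taken to keep the state in $[0,\infty)$ (the state constraint). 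The continuity claim is then the only regularity input needed to talk about test functions.

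First I would prove the dynamic programming principle (DPP): for every $x\ge 0$ and every stopping-time-free $t>0$,
\[
V(x)=\sup_{u\in\mathfrak U_x}\Bigl\{\int_0^{t} e^{-\rho s}\bigl(\ln u(s)-cx(s)^2\bigr)\,ds + e^{-\rho t}V(x(t))\Bigr\}.
\]
Because the control set $U=(0,\infty)$ is open and unbounded and the running payoff contains $\ln u$, the DPP is not quite standard; I would establish it by the usual two-inequality measurable-selection argument, but being careful that near-optimal controls can be spliced while keeping \eqref{ac_constraint} under control (a truncation of $u$ away from $0$ and $\infty$ changes $J$ by an arbitrarily small amount because of the integrability assumption). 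Granting the DPP, the subsolution property at an interior point $x_0>0$ follows the textbook route: if $\varphi\in C^1$ touches $V$ from above at $x_0$, feed a constant control $u\equiv u_0$ into the DPP over $[0,t]$, divide by $t$, let $t\downarrow0$, and optimize over $u_0\in(0,\infty)$ to get $\rho\varphi(x_0)-(r(x_0)-bx_0)\varphi'(x_0)+\ln(-\varphi'(x_0))+cx_0^2+1\le 0$ — here one uses that $\sup_{u>0}\{u\,p+\ln u\}=-1-\ln(-p)$ for $p<0$, together with $\varphi'(x_0)=V_x(x_0)<0$ wherever $V$ is differentiable (and more generally the touching condition forces $\varphi'(x_0)$ into the relevant range). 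For the supersolution property, if $\varphi$ touches $V$ from below at $x_0>0$, the "$\ge$" half of the DPP with near-optimal controls and a contradiction argument gives the reversed inequality. The state-constraint (subsolution-up-to-the-boundary on all of $[0,\infty)$, supersolution only in the interior) formulation is exactly what accommodates the fact that at $x=0$ admissible trajectories cannot leave $[0,\infty)$; here I would check that the drift at $0$ is $u+r(0)=u>0$, so the state is instantaneously pushed inward and the boundary causes no additional constraint on the subsolution side.

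The last assertion — that \eqref{OHJB} holds at $x=0$ in the classical sense — is where the real work lies, and I expect it to be the main obstacle. The point is that $V$ need not be differentiable on all of $(0,\infty)$ (that is the whole Skiba-point issue), but at the boundary one can do better. I would argue as follows: since $r(0)=0$ and the drift at $0$ equals the control $u>0$, starting at $x=0$ one can, for small $t$, follow essentially any sufficiently regular trajectory with $x(0)=0$, $\dot x(0)=u$; combining the sub- and supersolution inequalities (which both hold in a one-sided sense at $0$ because test functions from above and below both exist by continuity) pins down the one-sided derivative $V_x(0^+)$ as the unique $p<0$ solving $\rho V(0)+\ln(-p)+1=0$, i.e. $V_x(0^+)=-e^{-\rho V(0)-1}$, and a similar squeezing argument using a second-order Taylor expansion of $C^2$ test functions forces $V_{xx}(0^+)$ to exist (the $\tfrac12\sigma^2x^2V_{xx}$ term is absent since $\sigma=0$, which simplifies this considerably) and to satisfy the equation. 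Concretely, I would construct, for the optimal-or-near-optimal control emanating from $0$, an explicit comparison between $V$ and the solution of the ODE obtained by freezing coefficients, and use the strict monotonicity $V_x<0$ to upgrade the viscosity inequalities at $0$ to equalities; the delicate part is justifying that the value function inherits enough one-sided $C^1$ (indeed $C^2$, since with $\sigma=0$ equation \eqref{OHJB} is first-order and the claim "$V$ satisfies it classically at $0$" really only needs $V_x(0^+)$) regularity near $0$, which I would get from the explicit form of the Hamiltonian plus the a priori bounds on $V_x$ imported from \cite{KL}.
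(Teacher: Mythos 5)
You correctly identify that the only new content is the behavior at $x=0$: the paper cites \cite{KLS} for the fact that $V$ is a constrained viscosity solution on $[0,\infty)$ in the deterministic case, so your DPP derivation and the interior sub/supersolution arguments, while sound, merely re-establish what is already imported. The claim that needs proof is exactly the one you flag as the delicate part, and there your proposal has a real gap.

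The ``squeezing argument'' you sketch does not close. In the constrained-solution framework the supersolution inequality is only imposed on $(0,\infty)$, so at $x=0$ the viscosity machinery gives you a one-sided inequality at best; and the subsolution inequality at $0$ is vacuous unless the superdifferential of $V$ there is nonempty, which is essentially equivalent to the one-sided differentiability you are trying to prove. In other words, one cannot deduce the differentiability of $V$ at $0$ from the viscosity property alone — an extra quantitative input is required. The paper supplies it by working directly with the difference quotient $\bigl(V(h)-V(0)\bigr)/h$: it first proves two probabilistic claims about $\eps$-optimal controls started at $0$ — that the first hitting time $\tau^h_{u^\eps}$ of level $h$ is finite (Claim 4.1), and that $\inf_{\eps<\eps_0}\tau^h_{u^\eps}>0$ (Claim 4.2) — and then, feeding these into the dynamic programming inequality $V(0)-\eps< \int_0^{\tau^h}e^{-\rho t}(\ln u^\eps -cx_\eps^2)\,dt + e^{-\rho\tau^h}V(h)$ together with Jensen's inequality and the bound $\ln u\le \ln A+u/A-1$ with the clever choice $A=h/(V(0)-V(h))$, it obtains $\liminf_{h\to0^+}\bigl(V(h)-V(0)\bigr)/h\ge -e^{-1-\rho V(0)}$. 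The matching $\limsup$ bound is imported from \cite{KL}, and \eqref{dif0} follows. Your phrase ``explicit comparison between $V$ and the solution of the ODE obtained by freezing coefficients'' gestures toward such an estimate, but without Claims 4.1--4.2 (which control the exit time uniformly in $\eps$ and are needed to make $\rho\eps/(1-e^{-\rho\tau^h})$ vanish as $\eps\to0$) the argument does not go through. So your proposal identifies the correct target $V'(0)=-e^{-1-\rho V(0)}$ but does not contain the mechanism the paper uses to reach it.
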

\noindent
It was proved in \cite{KLS} that, when $\sigma=0$, the value function is a continuous viscosity solution of \eqref{OHJB} in $(0,\infty)$. Here, we establish classical differentiability of $V$ at $x=0$, as well.
\begin{thm}\label{comparison} Let $0\leq \sigma^2< \rho+2b$. Assume that  
\begin{itemize}
	\item
	$u\in C([0,\infty))$ is a strictly decreasing subsolution of \eqref{OHJB} in $[0, \infty)$, with $Du\leq -\frac{1}{c_1}$,  in the viscosity sense, for some positive constant $c_1$.
	\item
	$v\in C([0,\infty))$ is a strictly decreasing supersolution of \eqref{OHJB} in $(0, \infty)$, such that $v \geq -c_2(1+x^q)$. Here, $c_2$ can be any positive constant and $q$ can be any real number strictly smaller than $|k(\sigma)|$, where $k(0)=+\infty$, and $k(\sigma)$ is the negative root of  \eqref{root2}, for $\sigma>0$.
\end{itemize}
Then, $u \leq v$ in $[0, \infty)$. 
\end{thm}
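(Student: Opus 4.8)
The plan is to prove $u\le v$ by the doubling of variables method, adapted to the two genuinely nonstandard features of \eqref{OHJB}: the logarithmic gradient term and the degeneracy of the second-order coefficient at $x=0$. Recall from the definition of $H$ that
\[
H(x,p)=-1-\ln(-p)+(r(x)-bx)p-cx^{2}\quad(p<0),\qquad H(x,p)=+\infty\quad(p\ge0),
\]
so \eqref{OHJB} only constrains gradients in $\{p<0\}$, where $p\mapsto\ln(-p)$ is locally Lipschitz and bounded above on $\{p\le-1/c_1\}$. The bound $Du\le-1/c_1$ keeps every gradient slot arising for $u$ inside $\{p\le-1/c_1\}$, and the mere fact that $v$ is a viscosity supersolution in $(0,\infty)$ already forces $D^-v(y)\subset(-\infty,0)$ for every $y>0$ (else $H=+\infty$ at a subjet and the supersolution inequality fails). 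These two facts are exactly what will make the $\ln(-\cdot)$ terms drop out --- in fact contribute favourably --- when I subtract the two viscosity inequalities. The remaining difficulty is the unbounded domain together with the constraint $V_x<0$, which rules out the usual device of adding an increasing penalty to the supersolution; I will instead penalise the subsolution.

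First I would reduce matters to a maximum attained at a finite point. Since $u$ is continuous and decreasing it is bounded above, and $v\ge-c_2(1+x^{q})$, so $u-v$ grows at most like $x^{q}$. Fix $q'$ with $\max\{q,2\}<q'<|k(\sigma)|$: this is possible because $|k(\sigma)|>q$ by hypothesis, while the assumption $\sigma^{2}<\rho+2b$ guarantees $|k(\sigma)|>2$, which is what is needed to allow $q'>2$. By \eqref{root2} the power $x\mapsto-(1+x)^{q'}$ is a strict subsolution of \eqref{OHJB} for all large $x$ --- here $q'>2$ is what makes this power beat the $cx^{2}$ term. Set $u_\eta:=u-\eta\big((1+x)^{q'}+C_0\big)$ for $\eta>0$. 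Because an increasing function has been subtracted, $Du_\eta\le Du\le-1/c_1$, so the gradient bound survives; and for $C_0$ large enough (depending only on $b,\rho,c_1,q'$ and on $\|r\|_\infty,\|r'\|_\infty$) a short computation, carried out also at the degenerate point $x=0$ where $r(0)=0$ and $x^{2}=0$ make the drift and diffusion terms vanish, shows that $u_\eta$ is a \emph{strict} subsolution of \eqref{OHJB} on all of $[0,\infty)$, i.e.\ its subsolution inequality holds with a fixed slack $-\kappa$, $\kappa=\kappa(\eta)>0$. Moreover $u_\eta-v\le u(0)+c_2(1+x^{q})-\eta(1+x)^{q'}\to-\infty$, so if $\theta:=\sup_{[0,\infty)}(u_\eta-v)$ were $>0$ it would be attained at some finite $\bar x\ge0$. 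It then suffices to rule this out, since then $u_\eta\le v$ for all $\eta>0$ and $\eta\downarrow0$ gives $u\le v$.

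For the contradiction I would double variables with a barrier forcing the supersolution's variable into the open set. For $\varepsilon,\gamma>0$ let $(x_\varepsilon,y_\varepsilon)$ maximise
\[
(x,y)\longmapsto u_\eta(x)-v(y)-\frac{|x-y|^{2}}{2\varepsilon}-\frac{\gamma}{y}
\]
over $x\ge0,\ y>0$; the maximum is attained (the $x\to\infty$ direction is killed by $u_\eta\to-\infty$, the $y\to\infty$ direction by $q'>q$, the $y\to0^{+}$ direction by the barrier $\gamma/y$), hence $y_\varepsilon>0$ --- the sole purpose of the barrier, since $v$ is a supersolution only in $(0,\infty)$; no analogous device is needed for $u_\eta$, which is a subsolution up to and including $x=0$, and this is precisely why a ``constrained'' subsolution compares \emph{from below} with an interior supersolution. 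In the standard way $\tfrac1\varepsilon|x_\varepsilon-y_\varepsilon|^{2}\to0$, and for fixed $\gamma$ one has $(x_\varepsilon,y_\varepsilon)\to(y_\gamma,y_\gamma)$ as $\varepsilon\to0$ with $y_\gamma>0$, so both points stay away from $0$ for small $\varepsilon$. The classical theorem on sums (Crandall--Ishii) then yields $X,Y\in\RR$ with gradient slots $p_x=\tfrac{x_\varepsilon-y_\varepsilon}{\varepsilon}$ for $u_\eta$ at $x_\varepsilon$, $p_y=p_x+\tfrac{\gamma}{y_\varepsilon^{2}}$ for $v$ at $y_\varepsilon$, and the bound $x_\varepsilon^{2}X-y_\varepsilon^{2}Y\le\tfrac1\varepsilon|x_\varepsilon-y_\varepsilon|^{2}+\tfrac{2\gamma}{y_\varepsilon}$ (using that $x\mapsto\sigma x$ is Lipschitz). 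Since $p_x\le-1/c_1<0$ and $p_y<0$, the two viscosity inequalities are legitimate; subtracting the strict subsolution inequality for $u_\eta$ from the supersolution inequality for $v$, the logarithmic terms combine to $\ln(-p_y)-\ln(-p_x)=\ln\!\big(1-\tfrac{\gamma/y_\varepsilon^{2}}{-p_x}\big)\le0$, the first-order terms to $\big[(r(x_\varepsilon)-bx_\varepsilon)-(r(y_\varepsilon)-by_\varepsilon)\big]p_x-(r(y_\varepsilon)-by_\varepsilon)\tfrac{\gamma}{y_\varepsilon^{2}}$, and the rest is $O\!\big(\tfrac1\varepsilon|x_\varepsilon-y_\varepsilon|^{2}\big)+O(|x_\varepsilon-y_\varepsilon|)+O\!\big(\tfrac{\gamma}{y_\varepsilon}\big)$, using that $r$ is Lipschitz by Assumption~\ref{r}. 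Letting $\varepsilon\to0$ gives $\rho\big(u_\eta(y_\gamma)-v(y_\gamma)\big)\le-\kappa+O(\gamma/y_\gamma)$.

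The last step is the limit $\gamma\to0$, and here the key soft observation is that, writing $\theta_\gamma$ for the value of $\max_{y>0}\{u_\eta(y)-v(y)-\gamma/y\}$, one has $\gamma/y_\gamma=\big(u_\eta(y_\gamma)-v(y_\gamma)\big)-\theta_\gamma\le\theta-\theta_\gamma\to0$ (because $\theta_\gamma\uparrow\theta$ by continuity of $u_\eta-v$), while simultaneously $u_\eta(y_\gamma)-v(y_\gamma)\to\theta$; hence $\rho\theta\le-\kappa<0$, contradicting $\theta>0$. When $\bar x>0$ one may simply take $\gamma=0$ throughout, all errors being pure $O(\tfrac1\varepsilon|x_\varepsilon-y_\varepsilon|^{2})$ quantities, and the contradiction is immediate. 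The part that requires real care --- and that I expect to be the main obstacle --- is not any single estimate but the interplay of the three peculiarities: the penalty at infinity must be put on $u$ (not $v$) and its exponent is pinned by \eqref{root2}; the boundary penalty $\gamma/y$ shifts the supersolution gradient up by $\gamma/y_\varepsilon^{2}$, so one must know it stays negative (this uses that $v$ is a supersolution) and that the attendant errors $O(\gamma/y_\gamma)$ vanish (the soft argument above); and throughout, the verification of the strict subsolution inequality for $u_\eta$ must be pushed all the way to the degenerate boundary $x=0$, where Assumption~\ref{r} --- in particular $r(0)=0$ together with $r(x)\le(b+\rho)x$ near $0$ --- is what saves the day.
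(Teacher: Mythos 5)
Your proof is correct in its essentials, but it takes a genuinely different route from the paper's. The paper does \emph{not} double variables: it invokes Lemma~2.1 of \cite{KL}, which says that $\psi=u-v$ is itself a viscosity subsolution of the linearized first-order problem $\rho\psi+bx\,D\psi-(a+c^*)|D\psi|=0$ on $[0,\infty)$; it then constructs an explicit smooth, increasing, positive supersolution $w$ of a small elliptic regularisation $\rho w+(bx-(a+c^*))w'-\tfrac{\eta}{2}x^2 w''=0$ out of Tricomi/confluent-hypergeometric functions, chosen with $w(x)\sim x^{-k}$ at infinity where $k$ is the negative root of \eqref{root2}, uses $\epsilon w$ as a test function at a maximiser of $\psi-\epsilon w$, and lets $\epsilon\to 0$. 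The entire boundary-at-$0$ and log-nonlinearity discussion is thus hidden inside the cited linearisation lemma, and the constraint $q<|k(\sigma)|$ enters only through the growth of the explicit barrier $w$. Your argument internalises both of these: the log terms are handled directly by noting $p_y=p_x+\gamma/y_\varepsilon^{2}>p_x$ forces $\ln(-p_y)-\ln(-p_x)\le 0$, the boundary is handled by the $\gamma/y$ barrier plus the observation that Assumption~\ref{r} makes $|r(y)-by|\le(b+\rho)y$ near $0$ so the error $(r(y_\gamma)-by_\gamma)\gamma/y_\gamma^{2}=O(\gamma/y_\gamma)\to 0$, and the same quadratic \eqref{root2} shows up as the condition that the coefficient $-\rho-bq'+\tfrac{\sigma^{2}}{2}q'(q'-1)$ in the strict-subsolution computation be negative, which is exactly $q'<|k(\sigma)|$. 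Your approach is more self-contained (you do not appeal to a separate linearisation lemma, and you cover $\sigma>0$ as well, whereas the paper proves the theorem here only for $\sigma=0$ and refers to \cite{KLS,KL} for $\sigma>0$), at the cost of longer and more delicate bookkeeping; the paper's approach is shorter and structurally clarifies why the subtracted inequality is linear. One small inaccuracy in your writeup: you justify $q'>2$ by saying it is ``what makes this power beat the $cx^2$ term,'' but the $cx^{2}$ term actually cancels when you subtract the subsolution inequality for $u$ at the translated jet from the desired strict inequality for $u_\eta$; the binding constraint on $q'$ is only $q<q'<|k(\sigma)|$ (and even $q'>1$ would suffice to keep the second-order penalty term nonnegative), so ``$q'>2$'' is a convenience rather than a necessity. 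The remaining steps you marked as ``a short computation'' (strict subsolution on all of $[0,\infty)$ with slack $\kappa(\eta)>0$, the Ishii-lemma bound with the $2\gamma/y_\varepsilon$ correction, and the soft $\gamma/y_\gamma\to 0$ argument) all check out.
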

\begin{rmk}
It follows from \eqref{root2} that when $\sigma^2\in[0,\rho+2b)$, we have $|k(\sigma)|>2$.
\end{rmk}
\noindent
Theorem \ref{comparison} was proved in \cite{KLS, KL} for the stochastic case ($\sigma>0$), but was only stated for $q=2$. Here, we extend this result for $\sigma=0$. Theorem \ref{comparison} is a comparison principle that essentially establishes uniqueness of viscosity solutions in a suitable class. It is used in a decisive manner to establish existence of an optimal control for the deterministic shallow lake problem without any restriction in the parameter space, which is one of the novelties of this article.
\begin{thm}\label{ver}
The deterministic and the stochastic shallow lake problem admit an optimal control, which satisfies:
\begin{equation}\label{opt_control}
    u^*(t)=-\frac{1}{V'(x(t))}, \; t\geq 0
\end{equation}
where $x(\cdot)$ is the solution to \eqref{sldyn} corresponding to this control.
\end{thm}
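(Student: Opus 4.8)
The plan is to treat the two cases by different methods: a verification argument when $\sigma>0$, and a comparison (uniqueness) argument when $\sigma=0$, where the value function may fail to be differentiable at the Skiba point. \emph{Stochastic case.} Equation \eqref{OHJB} is uniformly elliptic on every compact subset of $(0,\infty)$, and since there $V'$ is negative and bounded away from $0$ (so that $\ln(-V_x)$ is Lipschitz in $V_x$ along solutions; cf. the discussion preceding \eqref{OHJB} and \cite{KLS,KL}), standard interior elliptic regularity upgrades the viscosity solution $V$ to $V\in C^2((0,\infty))$. Hence the feedback $x\mapsto -1/V'(x)$ is positive and locally Lipschitz on $(0,\infty)$, and the closed-loop equation $dx(t)=\bigl(-1/V'(x(t))-bx(t)+r(x(t))\bigr)dt+\sigma x(t)\,dW_t$, $x(0)=x$, has a unique strong solution up to a possible explosion/exit time. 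The first step is to show this solution neither explodes nor reaches $0$: non-explosion follows from the at-most-linear growth of the coefficients (Assumption \ref{r} and the growth of $V'$), while $0$ is not attained because there the diffusion coefficient vanishes whereas the drift equals $-1/V'(0)>0$, which a one-dimensional scale-function computation turns into a rigorous statement. The same estimates show that $u^*(t)=-1/V'(x(t))$ is admissible, i.e. satisfies \eqref{ac_constraint}.

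Next I would apply It\^o's formula to $e^{-\rho t}V(x(t))$. Because $u^*(t)$ is precisely the maximiser of $u\mapsto\tilde H(x(t),u,V'(x(t)))$, the drift term collapses, via \eqref{OHJB}, to $-e^{-\rho t}\bigl(\ln u^*(t)-cx^2(t)\bigr)$; a localisation that removes the martingale part (its quadratic variation being controlled by the growth of $V'$ and moments of $x$) then yields $V(x)=\EE\bigl[\int_0^T e^{-\rho t}(\ln u^*(t)-cx^2(t))\,dt\bigr]+\EE\bigl[e^{-\rho T}V(x(T))\bigr]$. Letting $T\to\infty$ and using the polynomial growth bound on $V$ together with moment estimates of the form $\EE[x(t)^p]\le Ce^{Ct}$ forces $e^{-\rho T}\EE[V(x(T))]\to 0$ and absolute integrability of the integrand, so $V(x)=J(x;u^*)$. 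Since $V(x)=\sup_u J(x;u)\ge J(x;u^*)$ trivially, $u^*$ is optimal and satisfies \eqref{opt_control}.

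\emph{Deterministic case.} From the Pontryagin Maximum Principle and the phase portrait of the associated Hamiltonian system one reads off the candidate optimal trajectories — the relevant stable manifolds of the equilibria and, at the Skiba point $x_s$, the two competing ones — and hence a feedback $x\mapsto u^*(x)$ defined on $[0,\infty)\setminus\{x_s\}$; off $x_s$ these trajectories form a field of extremals, and the standard identification of the costate with the derivative of the achieved payoff gives $u^*(x)=-1/W'(x)$, where $W(x):=J(x;u^*)$. One first checks, by direct estimates on the candidate trajectories (no blow-up; integrability of $\ln u^*$ and of $cx^2$ along them), that $W$ is finite, continuous and strictly decreasing on $[0,\infty)$, with $W'$ bounded away from $0$ and a polynomial lower bound $W(x)\ge -c_2(1+x^q)$. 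Off $x_s$, $W$ is $C^1$ and solves \eqref{OHJB} classically, being the payoff of the feedback $u^*(x)=-1/W'(x)$, which is the maximiser in $H$; at $x_s$, $W$ has a corner, and the viscosity supersolution inequality there reduces, by the concavity of $p\mapsto -(r(x)-bx)p+\ln(-p)$, to the elementary fact that a concave function vanishing at the two one-sided derivatives $W'(x_s^{-})$, $W'(x_s^{+})$ is nonnegative between them. Thus $W$ is a strictly decreasing viscosity supersolution of \eqref{OHJB} on $(0,\infty)$ with $W\ge -c_2(1+x^q)$. By Theorem \ref{constrained_vs_det}, $V$ is a strictly decreasing viscosity subsolution of \eqref{OHJB} on $[0,\infty)$ with $DV\le -1/c_1$, so the comparison principle of Theorem \ref{comparison} (with $\sigma=0$, where $|k(0)|=+\infty$ and the restriction on $q$ is vacuous) yields $V\le W$ on $[0,\infty)$. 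Combined with $W=J(x;u^*)\le\sup_u J(x;u)=V$ this gives $W=V$, hence $J(x;u^*)=V(x)$, so $u^*$ is optimal; \eqref{opt_control} follows since $u^*(x)=-1/W'(x)=-1/V'(x)$ for $x\ne x_s$, the value at $x_s$ being immaterial because the optimal trajectory leaves $x_s$ instantaneously.

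The main obstacle in the stochastic case is the transversality/integrability package: non-explosion and non-attainment of $0$ for the closed-loop SDE, admissibility of $u^*$, and $e^{-\rho T}\EE[V(x(T))]\to 0$. In the deterministic case, it is proving that $J(\cdot;u^*)$ is a genuine supersolution of \eqref{OHJB} lying in the class covered by Theorem \ref{comparison} — the Skiba corner being the one place where classical solvability fails and where the concavity of the Hamiltonian in $p$ must be invoked.
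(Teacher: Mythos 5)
Your proposal is correct and follows the same two-pronged strategy as the paper: a verification argument via It\^o's formula for $\sigma>0$ (after upgrading $V$ to a classical solution of \eqref{OHJB}, as in Proposition \ref{constrained_vs}), and a comparison-principle argument for $\sigma=0$ that dominates $V$ by the Pontryagin payoff $J_P=W$ via Theorem \ref{comparison} together with Proposition \ref{propofJp}. The only minor technical differences are that the paper controls the transversality term for $\sigma>0$ by stopping at $\theta_n=\inf\{t:|x(t)-x|\geq n\}\wedge n$ and exploiting the monotonicity of $V$ (so that $\limsup_n\EE[e^{-\rho\theta_n}V(x(\theta_n))]\leq 0$ suffices, avoiding the moment estimates $\EE[x(t)^p]\le Ce^{Ct}$ you invoke), and, for $\sigma=0$, it cites \cite{KosZoh} for the viscosity (super)solution property of $J_P$ at the Skiba point rather than re-deriving it from the concavity of the Hamiltonian in $p$ as you do.
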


\subsection{Generalization of the Arrhenius' law}

\noindent
We present now the setting of a general control theory problem to highlight the natural appearance of a noise-dependent potential in control theory problems, and we suggest a methodology to prove that Arrhenius' law is stable under this new model.
\noindent

We assume that there exists a filtered probability space $( \Omega, \mathcal{F},\{ \mathcal{F}_t\}_{t\ge 0},\PP )$
satisfying the usual conditions, and a Brownian motion $W(\cdot)$ defined on that space. Let us also assume that the state dynamics is now described by the following autonomous stochastic differential equation:
\begin{equation}\label{gensde}
\begin{cases}
dx^{\eps}(t)=f(x^{\eps}(t),u(t))dt+\sqrt{2\varepsilon} dW_t & t\geq 0 \\
x^{\eps}(0)=x\geq 0 & 
\end{cases}
\end{equation}
where the function $f\in C^1\left(\mathbb{R}\times U\right)$ and it satisfies:
\begin{equation}
    \begin{cases}
        |f_x|\leq C \\
        |f(x,u)|\leq C(1+|x|+|u|)
    \end{cases}
\end{equation}
Let $G$ be continuous on $\mathbb{R}\times U$, $G(x,\cdot)\in C^1(U)$ and $\rho>0$. We consider the value function of the problem
\begin{equation}
    V_{\eps}(x)=\sup \limits_{u\in\mathfrak{U}}\mathbb{E}_x \left[ \int_0^{\infty} e^{-\rho s} G(x^{\eps}(s),u(s))ds \right]
\end{equation}
where $\mathfrak{U}$ is the set of $\mathcal{F}_t-$adapted, $\mathbb{P}-$a.s. locally integrable processes with values in $U$ satisfying
$$
\mathbb{E}_x \left[ \int_0^{\infty} e^{-\rho s} G(x^{\eps}(s),u(s))ds \right]<\infty
$$
such that the stochastic differential equation \eqref{gensde} has a unique strong solution $x(\cdot).$\\
\noindent
Henceforward, we make the following assumptions:
\begin{asm}\label{asm2}
    \begin{enumerate}[(i)]
        \item The value function $V_{\eps}$ is a (classical) solution to the associated HJB equation:
\begin{equation}\label{genhjb}
-\eps V''_{\eps}-H(x,V'_{\eps})+\rho V_{\eps}=0
\end{equation}
where $H(x,p)=\sup \limits_{u\in U}\{f(x,u)p+G(x,u)\}$.
\item There exists $\eps_0>0$ such that $V_{\eps}$ and $V'_{\eps}$ are uniformly bounded with respect to $\eps<\eps_0$ on every compact subset of $\mathbb{R}$
\item $H(x,p)$ is $C^2$ and $H_{pp}>0$
\item There exists an optimal stationary Markov control policy of the form $$u^*(s)=g\left(x^*(s),V'_{\eps}(x^*(s))\right)=:\bar{u}^*(x^*(s))$$ such that
$$
f_u(x,\bar{u}^*(x))V'_{\eps}(x)+G_u(x,\bar{u}^*(x))=0
$$
and $g$ is a continuous function.
    \end{enumerate}
\end{asm}
\noindent
Under Assumption \ref{asm2}, the optimally controlled system \eqref{gensde} takes the form: \begin{equation}\label{optgensde}
\begin{cases}
	dx^{\eps}(t)=-F'_{\eps}(x^{\eps}(t))dt+\sqrt{2\eps} dW_t & t\geq 0\\
	x^{\eps}(0)=x\geq 0 &
\end{cases}
\end{equation}
where $F'_{\eps}(x)=-f(x,g(x,V'_{\eps}(x)).$
\noindent
Based on Assumption \ref{asm2} (i)-(ii) and stability property of viscosity solutions,
\begin{equation}\label{vslimit}
V_{\eps} \overset{\eps\rightarrow 0}{\rightarrow }V_0 \text{ locally uniformly }
\end{equation}
where $V_0$ is a viscosity solution to the HJB equation \eqref{genhjb} for $\eps=0$. We make the following assumption on function $V_0$.
\begin{asm}\label{asmdif}
The function $V_0$ of \eqref{vslimit} is almost everywhere differentiable.
\end{asm}

\begin{lemma}\label{F0} Assuming \ref{asm2} and \ref{asmdif}, let $\Omega$ be a compact subset of $\mathbb{R}$. Then:
\begin{enumerate}[(i)]
    \item There exists $C=C(\Omega)$ such that $V''_{\eps}(x)\geq C$ for all $x\in \Omega$ and  $\eps<\eps_0$.
    \item The family of functions $\{F_{\eps}\}_{\eps>0}$ converges locally uniformly to a function $F_0$, which is almost everywhere differentiable with $$F'_0(x)=-f(x,g(x,V'_0(x))).$$ 
    \end{enumerate}
\end{lemma}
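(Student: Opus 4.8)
The plan is to prove part (i) by a localised maximum‑principle (Bernstein‑type) estimate on the equation satisfied by $V_\eps''$, and then to deduce part (ii) from (i) by a soft convex‑analysis argument together with dominated convergence; essentially all the work is in (i). For (i), write the HJB equation of Assumption \ref{asm2}(i) as $\eps V_\eps''=\rho V_\eps-H(x,V_\eps')$. Since $V_\eps\in C^2$ and $H\in C^2$, the composition $x\mapsto H(x,V_\eps'(x))$ is $C^1$, so the right‑hand side is $C^1$ and $V_\eps\in C^3$; running the argument once more (now $x\mapsto(x,V_\eps'(x))$ is $C^2$ and $H\in C^2$) gives $V_\eps\in C^4$, hence $\phi:=V_\eps''\in C^2$. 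Differentiating the HJB equation twice in $x$ yields, at every point,
\begin{equation}\label{eq:phi}
\eps\,\phi''+H_p\,\phi'+H_{pp}\,\phi^{2}+(2H_{xp}-\rho)\,\phi+H_{xx}=0,
\end{equation}
where $H_p,H_{pp},H_{xp},H_{xx}$ are evaluated at $(x,V_\eps'(x))$; by the envelope identity the coefficient $H_p(x,V_\eps'(x))$ of $\phi'$ equals $f(x,g(x,V_\eps'(x)))=-F_\eps'(x)$. Fix a compact $\Omega\subset\RR$ and a compact $\Omega'$ with $\Omega\subset\subset\Omega'$. By Assumption \ref{asm2}(ii) the set $K:=\overline{\{(x,V_\eps'(x)):x\in\Omega',\ \eps<\eps_0\}}$ is compact, so $|H_p|,|H_{xp}|,|H_{xx}|\le M_0$ on $K$, while Assumption \ref{asm2}(iii) gives $H_{pp}\ge\delta_0>0$ on $K$. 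Pick $\zeta=\tilde\zeta^{\,2}$ with $\tilde\zeta\in C_c^\infty(\mathrm{int}\,\Omega')$, $0\le\tilde\zeta\le1$, $\tilde\zeta\equiv1$ on $\Omega$; the squared form guarantees that $|\zeta'|$, $|\zeta''|$ and $(\zeta')^{2}/\zeta$ are bounded on $\{\zeta>0\}$ by a constant $C_\zeta=C_\zeta(\Omega,\Omega')$.

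Set $w:=\zeta\phi$, continuous on $\Omega'$ and vanishing off $\mathrm{supp}\,\tilde\zeta$. If $\min_{\Omega'}w\ge0$ then $\phi=w\ge0$ on $\Omega$ and we are done; otherwise the minimum is attained at an interior point $x_0$ with $\zeta(x_0)>0$, $w(x_0)<0$, $w'(x_0)=0$, $w''(x_0)\ge0$. From $w'(x_0)=0$ one gets $\phi'(x_0)=-(\zeta'/\zeta)(x_0)\,\phi(x_0)$, and then $w''(x_0)\ge0$ gives $\phi''(x_0)\ge\big(2(\zeta')^{2}/\zeta^{2}-\zeta''/\zeta\big)(x_0)\,\phi(x_0)$. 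Inserting these into \eqref{eq:phi} at $x_0$, multiplying by $\zeta(x_0)^{2}>0$ and using $\zeta\phi=w$, the bounds on $K$, $\eps<\eps_0$ and the cutoff bounds, every term except $H_{pp}\zeta^{2}\phi^{2}=H_{pp}(x_0)\,w(x_0)^{2}$ is controlled by $C_1|w(x_0)|+C_2$ with $C_1,C_2$ \emph{independent of $\eps$} — here it is crucial that the top‑order term $\eps\zeta^{2}\phi''(x_0)$ has the favourable sign, which is why we argue at the minimum of $w$. Thus $\delta_0\,w(x_0)^{2}\le C_1|w(x_0)|+C_2$, so $|w(x_0)|\le C_3(\Omega)$, whence $V_\eps''=\phi\ge-C_3(\Omega)$ on $\Omega$ uniformly in $\eps<\eps_0$: this is (i), with $C(\Omega)=-C_3(\Omega)$.

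For (ii), fix a compact $\Omega$ and put $W_\eps:=V_\eps-\tfrac12 C(\Omega)x^{2}$; by (i), $W_\eps''\ge0$, i.e.\ $W_\eps$ is convex on $\Omega$, and $W_\eps\to W_0:=V_0-\tfrac12 C(\Omega)x^{2}$ locally uniformly by \eqref{vslimit}. Since the $W_\eps$ are uniformly Lipschitz on $\Omega$ (Assumption \ref{asm2}(ii)), the elementary fact that pointwise‑convergent convex functions have their derivatives converging at every differentiability point of the limit yields $W_\eps'(x)\to W_0'(x)$, hence $V_\eps'(x)\to V_0'(x)$, at every $x$ where $V_0$ is differentiable, i.e.\ for a.e.\ $x$ by Assumption \ref{asmdif}. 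As $|V_\eps'|$ is uniformly bounded on $\Omega$, dominated convergence gives $V_\eps'\to V_0'$ in $L^1(\Omega)$; since $F_\eps'=-f(\cdot,g(\cdot,V_\eps'))=-H_p(\cdot,V_\eps')$, continuity of $H_p$ and the uniform bound $|F_\eps'|\le\sup_K|H_p|$ then give $F_\eps'\to G_0:=-f(\cdot,g(\cdot,V_0'))$ in $L^1_{loc}$. Normalising the potential by $F_\eps(x_0)=0$ at a fixed $x_0$, we obtain $F_\eps(x)=\int_{x_0}^{x}F_\eps'\to\int_{x_0}^{x}G_0=:F_0(x)$ uniformly on compacts; finally $F_0$ is locally Lipschitz (as $G_0\in L^\infty_{loc}$), hence differentiable a.e., and by the Lebesgue differentiation theorem $F_0'(x)=G_0(x)=-f(x,g(x,V_0'(x)))$ a.e., which is (ii).

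The main obstacle is the $\eps$‑uniform lower bound for $V_\eps''$ in (i); it is precisely here that Assumption \ref{asm2}(iii) ($H_{pp}>0$) is used in a structural way to close the quadratic inequality. Two points require care: the preliminary $C^4$‑regularity, which must be extracted from $H\in C^2$ alone by bootstrapping $\eps V_\eps''=\rho V_\eps-H(x,V_\eps')$ without ever differentiating $H$ more than twice; and the choice of the squared cutoff, without which the singular factor $(\zeta')^{2}/\zeta$ would destroy the estimate. Once (i) is in hand, (ii) follows by soft arguments.
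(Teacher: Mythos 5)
Your argument is correct and follows the same route as the paper: for (i) both differentiate the HJB twice, cut off with a function whose $(\zeta')^2/\zeta$ is bounded, and run a maximum-principle argument at an interior negative minimum of $\zeta V_\eps''$, closing the quadratic inequality via $H_{pp}\ge\delta_0>0$ and the uniform bound on $V_\eps'$; for (ii) both convert the one-sided Hessian bound into local semiconvexity and deduce a.e.\ convergence $V_\eps'\to V_0'$ (the paper cites Theorem 3.2(i) of Fleming--Souganidis, while you spell out the underlying convex-analysis fact). The only material you add over the paper is the $C^4$ bootstrap of $V_\eps$ from $\eps V_\eps''=\rho V_\eps-H(x,V_\eps')$ with $H\in C^2$, which the paper leaves implicit; that step is correct and worth keeping.
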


We are now ready to state our main result:
\begin{thm}\label{arrh}
We assume that the function $F_0$ of Lemma \ref{F0} forms a double well potential with local minima  $x_{\pm}$ and local maximum $x_*,$ with $x_-<x_*<x_+$ and that $F_0$ is $C^1$ close to $x_{\pm},x_*$. Furthermore, we assume that there exist $a,q, M>0$ and $b\in \mathbb{R}$ (independent of $\eps$) such that $F_{\eps}(x)\geq ax^q+b,$ for all $x>M.$  Let $\tau^{\eps}_{x_-}=\inf \{t\geq 0:\; x^{\eps}(t)\leq x_- \}$ be the first hitting time of $x_-$ of the stochastic process $x^{\eps}$ of \eqref{optgensde}. Then the expectation of $\tau^{\eps}_{x_-}$, when $x^{\eps}$  starts  at $x_+$, satisfies
\begin{equation}\label{Law}
	\lim \limits_{\eps \rightarrow 0}\varepsilon\log \EE_{x_+} \left[\tau^{\eps}_{x_-}  \right]=F_0(x_*)-F_0(x_+)
\end{equation}
\end{thm}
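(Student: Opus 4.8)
The plan is to reduce \eqref{Law} to a Kramers-type Laplace asymptotic analysis of the scalar diffusion \eqref{optgensde}, carried out with uniformity in $\eps$; a large-deviations (Freidlin--Wentzell) argument is avoided precisely because the drift $-F'_\eps$ depends on the small parameter. Write $L_\eps=\eps\,\partial_{xx}-F'_\eps(x)\,\partial_x$ for the generator of $x^\eps$. The $\eps$-independent coercivity $F_\eps(x)\ge ax^q+b$ for $x>M$ (with $q>0$) forces $\int^{\infty}e^{F_\eps(x)/\eps}\,dx=+\infty$, so $+\infty$ is a non-attracting natural boundary for $x^\eps$: the closed-loop diffusion does not explode, it is recurrent on a neighbourhood of $[x_-,x_+]$, $\tau^\eps_{x_-}<\infty$ $\PP_{x_+}$-a.s., and $w_\eps(y):=\EE_y[\tau^\eps_{x_-}]$ is the minimal nonnegative solution of $L_\eps w_\eps=-1$ on $(x_-,\infty)$ with $w_\eps(x_-)=0$. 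Solving this linear ODE with integrating factor $e^{-F_\eps/\eps}$, and using the behaviour at $+\infty$ to fix the free constant, yields
\[
\EE_{x_+}\big[\tau^\eps_{x_-}\big]=\frac{1}{\eps}\int_{x_-}^{x_+}e^{F_\eps(x)/\eps}\Big(\int_{x}^{\infty}e^{-F_\eps(\xi)/\eps}\,d\xi\Big)dx .
\]

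It then remains to compute $\lim_{\eps\to0}\eps\log$ of this double integral. For the \emph{lower bound} I would retain only $x$ in a small neighbourhood $[x_*-\eta,x_*+\eta]$ of the barrier and $\xi$ in a small neighbourhood $[x_+-\eta,x_++\eta]$ of the well, bound each factor from below by the relevant extremum of $F_\eps$ over that compact, replace $F_\eps$ by $F_0$ using the locally uniform convergence of Lemma~\ref{F0}, send $\eps\to0$ and then $\eta\to0$ (invoking continuity of $F_0$ near $x_*,x_+$), to obtain $\liminf_{\eps\to0}\eps\log\EE_{x_+}[\tau^\eps_{x_-}]\ge F_0(x_*)-F_0(x_+)$. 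For the \emph{upper bound}, fix $M'>M$ large enough that $a(M')^q+b>\max_{[x_-,x_+]}F_0+\big(F_0(x_*)-F_0(x_+)\big)$, and split $\int_x^\infty=\int_x^{M'}+\int_{M'}^\infty$. On the fixed compact window $[x_-,M']$ the uniform convergence $F_\eps\to F_0$ replaces $F_\eps$ by $F_0$ up to a factor $e^{o(1)/\eps}$; the resulting exponent $F_0(x)-F_0(\xi)$, for $x\in[x_-,x_+]$ and $\xi\in[x,M']$, has maximum exactly $F_0(x_*)-F_0(x_+)$, since $F_0$ increases on $[x_-,x_*]$, decreases on $[x_*,x_+]$, and, having no local minimum to the right of $x_+$ while $F_0\to\infty$, satisfies $\min_{[x_+,\infty)}F_0=F_0(x_+)$. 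The tail piece is dominated by $\int_{M'}^\infty e^{-(a\xi^q+b)/\eps}\,d\xi$, which by the choice of $M'$ is exponentially smaller than the main term. Bounding the double integral by (length)$\times e^{\max/\eps}$, dividing by $\eps$ and taking $\eps\log$ gives $\limsup_{\eps\to0}\eps\log\EE_{x_+}[\tau^\eps_{x_-}]\le F_0(x_*)-F_0(x_+)$, and the two bounds together yield \eqref{Law}.

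The crux --- and what genuinely distinguishes this from the classical Arrhenius/Eyring--Kramers statement --- is that the potential $F_\eps$ itself depends on $\eps$, so no off-the-shelf Kramers formula applies. The two uniform inputs do the work: the locally uniform convergence $F_\eps\to F_0$ from Lemma~\ref{F0} lets the Laplace analysis on the fixed window $[x_-,M']$ read off the limiting barrier height $F_0(x_*)-F_0(x_+)$, while the $\eps$-independent coercivity $F_\eps\ge ax^q+b$ does double duty: it makes $+\infty$ inaccessible for every small $\eps$ (so that $\tau^\eps_{x_-}$ is finite and the explicit representation of $w_\eps$ is legitimate) and it makes the far-field contribution to $\int_x^\infty e^{-F_\eps/\eps}\,d\xi$ uniformly negligible. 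The remaining technical point is the rigorous justification of the formula for $w_\eps$ --- non-explosion and recurrence of \eqref{optgensde} and the vanishing of the boundary term at $+\infty$ in solving $L_\eps w_\eps=-1$ --- all of which again follow from the coercivity bound.
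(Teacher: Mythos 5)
Your proposal follows essentially the same route as the paper: derive the explicit double-integral representation of $\EE_{x_+}[\tau^\eps_{x_-}]$ from the Poisson equation $L_\eps w_\eps=-1$, split the inner integral at a large cutoff, use the locally uniform convergence $F_\eps\to F_0$ from Lemma~\ref{F0} together with Laplace asymptotics on the compact piece, and use the $\eps$-independent coercivity bound $F_\eps\ge ax^q+b$ to show the tail piece is exponentially negligible. Your write-up is slightly more careful on two points that the paper glosses over --- justifying the explicit formula via non-explosion/recurrence at the natural boundary $+\infty$, and making explicit that $\min_{[x_+,\infty)}F_0=F_0(x_+)$ so the exponent in the compact window really peaks at $(x_*,x_+)$ --- but these are refinements of the same argument, not a different one.
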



We will now state a result which is of general interest and stands on its own. We begin by stating the assumptions of the Lemma.

\begin{asm}\label{asmlemma}
 We assume that the drift $f$ is linear with respect to u, i.e. it is of the form $f(x,u)=a(x)u+b(x)$ and the function $G(x,u)$ is a concave function of $u$ for all $x\in \mathbb{R}.$ We further assume that the noiseless value function $V_0$ is a classical solution to the HJB eq. \eqref{genhjb} for $\varepsilon=0$ everywhere except for a finite number of points $x_0$ whereat it is not differentiable, but there exist the side derivatives $V_0'(x_0(-)), \; V_0'(x_0(+))$.    
\end{asm}
\begin{lemma}\label{genres}
Under Assumption \ref{asmlemma}, if there exists the optimal control, then it is given in the feedback form $u^*(x)=h_x(-a(x)V_0'(x))$, where $h_x$ is the inverse of the function $G_u(x,\cdot),$ and for the drift of the optimally controlled system at $x_0$ we have:
$$
f(x_0, u^*(x_0(-)))\cdot f(x_0, u^*(x_0(+)))\neq 0
$$
\end{lemma}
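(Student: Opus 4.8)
Everything hinges on the structure of the Hamiltonian. Since $f(x,u)=a(x)u+b(x)$ is affine in $u$ and $G(x,\cdot)$ is concave, the map $u\mapsto \tilde H(x,u,p)=(a(x)u+b(x))p+G(x,u)$ is concave, so its maximiser $\bar u(x,p)$ is the unique solution of $a(x)p+G_u(x,\bar u)=0$, that is $\bar u(x,p)=h_x(-a(x)p)$ with $h_x=(G_u(x,\cdot))^{-1}$; this inverse is well defined on the range of arguments that actually occur because Assumption \ref{asm2}(iii) forces $G_{uu}<0$ and $a(x)\neq 0$ at the optimum. Two facts will be used repeatedly: by the envelope theorem $H_p(x,p)=\tilde H_p(x,\bar u(x,p),p)=a(x)\bar u(x,p)+b(x)=f(x,\bar u(x,p))$, and, again by Assumption \ref{asm2}(iii), $H(x,\cdot)$ is strictly convex.

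I would first establish the feedback representation by a verification argument. Let $u^*$ be an optimal control with deterministic ($\varepsilon=0$) trajectory $x^*$. Differentiating $t\mapsto e^{-\rho t}V_0(x^*(t))$ at times where $V_0$ is differentiable at $x^*(t)$ and using $\rho V_0(x^*(t))=H(x^*(t),V_0'(x^*(t)))\ge \tilde H(x^*(t),u^*(t),V_0'(x^*(t)))$ gives $\frac{d}{dt}[e^{-\rho t}V_0(x^*(t))]\le -e^{-\rho t}G(x^*(t),u^*(t))$; integrating from $0$ to $T$,
\[
V_0(x)\ \ge\ \int_0^T e^{-\rho t}G(x^*(t),u^*(t))\,dt + e^{-\rho T}V_0(x^*(T)).
\]
(On a time interval where $x^*\equiv x_0$ one has $\dot x^*=0$, so the term $V_0'\dot x^*$ is zero regardless of the kink, and the finitely many non-differentiability points contribute nothing otherwise.) Letting $T\to\infty$, the transversality term vanishes thanks to the growth control on $V_0$ from Assumption \ref{asm2}(ii) together with an a priori bound on $x^*$; optimality of $u^*$ then forces equality throughout, so $u^*(t)$ attains the supremum defining $H(x^*(t),V_0'(x^*(t)))$ for a.e.\ $t$, i.e.\ $u^*(t)=h_{x^*(t)}(-a(x^*(t))V_0'(x^*(t)))$ wherever $V_0$ is differentiable. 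This is the claimed feedback form.

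For the drift at a non-differentiability point $x_0$, set $p_\pm:=V_0'(x_0(\pm))$; since $V_0$ fails to be differentiable at $x_0$ we have $p_-\neq p_+$, say $p_-<p_+$. Passing to the one-sided limits in $\rho V_0(x)=H(x,V_0'(x))$ as $x\to x_0^\pm$ (legitimate since $V_0$ is continuous and $V_0'(x)\to p_\pm$, which is pinned down by local boundedness of $V_0'$ and strict convexity of $H(x_0,\cdot)$) yields $\rho V_0(x_0)=H(x_0,p_-)=H(x_0,p_+)$. Since $H_p(x_0,\cdot)$ is strictly increasing and $H(x_0,p_-)=H(x_0,p_+)$ with $p_-<p_+$, one must have $H_p(x_0,p_-)<0<H_p(x_0,p_+)$ (otherwise $H(x_0,\cdot)$ would be monotone on $[p_-,p_+]$, contradicting equal endpoint values). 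By the envelope identity, $H_p(x_0,p_\pm)=f(x_0,\bar u(x_0,p_\pm))=f(x_0,u^*(x_0(\pm)))$, hence
\[
f(x_0,u^*(x_0(-)))<0<f(x_0,u^*(x_0(+))),
\]
so the product $f(x_0,u^*(x_0(-)))\cdot f(x_0,u^*(x_0(+)))$ is strictly negative, in particular nonzero, which is the assertion (with the sign of the product as a bonus, reflecting that the two optimal trajectories emanating from the Skiba point head in opposite directions).

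The delicate point is the verification step of the second paragraph: making rigorous the passage $e^{-\rho T}V_0(x^*(T))\to 0$ in the present general setting (controlling the exceptional set of times and combining the growth estimate on $V_0$ with an a priori estimate on the optimal trajectory), and checking that $G_u(x,\cdot)$ is genuinely invertible along optimal trajectories so that the map $h_x$ appearing in the statement is meaningful. Once the feedback form is in place, the nonvanishing of the one-sided drifts is the short convexity argument above, which I expect to be routine.
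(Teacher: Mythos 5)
Your proof is correct and reaches the same conclusion, but via a genuinely cleaner mechanism than the paper. The paper argues by contradiction: assume $f(x_0,u^*(x_0(-)))=0$, insert this into the one-sided HJB equations to get a functional identity between $G(x_0,u_-)$ and $G(x_0,u_+)$, then apply the Mean Value Theorem to $G(x_0,\cdot)$ and the strict monotonicity of $h_{x_0}$ to force $u_-=u_+$, contradicting $p_-\neq p_+$. You instead observe directly that the one-sided HJB equations give $H(x_0,p_-)=H(x_0,p_+)=\rho V_0(x_0)$, and then the strict convexity of $H(x_0,\cdot)$ in $p$ (Assumption \ref{asm2}(iii)) together with the envelope identity $H_p=f$ forces $H_p(x_0,p_-)<0<H_p(x_0,p_+)$. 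This is shorter, avoids the case split (the paper must handle the $u^*(x_0(+))$ factor symmetrically), and in fact gives more: the one-sided drifts must have \emph{opposite} signs, so the Skiba point is indeed a repeller of the noiseless optimally controlled dynamics — information the paper's proof does not extract. Both arguments rely, in the end, on the same strict-concavity input (the paper through strict monotonicity of $h_{x_0}$, you through $H_{pp}>0$), which technically goes slightly beyond the bare concavity stated in Assumption \ref{asmlemma}, though it is already implicit in the lemma's use of $h_x=(G_u(x,\cdot))^{-1}$. One remark: your first paragraph re-derives the feedback form by a full verification argument, which is more than the paper does (it simply reads off the first-order condition); this extra rigour is fine but not needed for the present lemma, whose real content is the nonvanishing of the one-sided drifts.
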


An interesting application of this result is presented in Lemma \ref{final}(iv) in Section \ref{further} in the case of the shallow lake problem. The fact that the drift of the optimally controlled lake does not disappear at the Skiba point, from the left and from the right, guarantees the existence of the side limits of the second derivative of $V$ at the Skiba point.

The rest of the paper is organised as follows. In section 3, we present the Pontryagin Maximum Principle and describe how the candidate value function is constructed for the shallow lake problem. In section 4, we prove  Theorems \ref{constrained_vs_det}-\ref{ver} which refer to the existence of optimal control. In section 5, we prove Lemma \ref{genres} the generalization of the Arrhenius' law (Lemma \ref{F0} and Theorem \ref{arrh}) and prove that the shallow lake problem satisfies the hypotheses of Theorem \ref{arrh} under suitable assumptions.  Finally, in section 6, we present some further estimates on the value function of the deterministic and stochastic shallow lake problem.

\section{Framework}
\noindent

The Pontryagin Maximum Principle provides necessary conditions for optimality. In our case, the Pontryagin Maximum Principle takes the following form: if $u^*: [0,\infty) \rightarrow (0,\infty)$ is an optimal control  and $x^*(t)$ is the associated optimal trajectory, then there exists a function $p^* (t)$, called the co-state, such that if
\begin{equation}\label{ham}
\tilde{H}(x,u,p)=\ln u-cx^2+p\left( u-bx+r(x) \right)
\end{equation}
then
\begin{enumerate}[1.]
\item $x^*,p^*$ are solutions of the system:
\begin{equation}\label{costate}
	\begin{cases}	\dot{x}=\frac{\theta \tilde{H}}{\theta p}=u-bx+r(x) \\
		\dot{p}=-\frac{\theta \tilde{H}}{\theta x}+\rho p=\left(\rho+b-r'(x)\right)p+2cx
	\end{cases}
\end{equation}
\item $u^*(t)$ maximizes the Hamiltonian $\tilde{H}$ i.e.
\begin{equation}\label{max}
	\max \limits_{u}\tilde{H}(x^*(t),u,p^*(t))=\tilde{H}(x^*(t),u^*(t),p^*(t)) \; \Rightarrow \; u^*(t)=-\frac{1}{p^*(t)}
\end{equation}
\item $p^*(t)$ satisfies the transversality condition \begin{equation}
	\lim \limits_{t\rightarrow \infty} e^{-\rho t}p^*(t)=0
\end{equation}
\end{enumerate}


Due to relation \eqref{max}, there is a one-to-one correspondence between the control $u^*$ and the co-state $p^*$ and the system \eqref{costate} can be rewritten in the state-control form:
\begin{equation}\label{slakes}
\begin{cases}	\dot{x}=u-bx+r(x)=:f(u,x) \\
	\dot{u}=-\left(\rho+b-r'(x)\right)u+2cxu^2=:g(u,x)=2cxu\left(u-g_1(x) \right)
\end{cases}
\end{equation}
The autonomous system \eqref{slakes} is called the \textit{shallow lake system} and its phase curves correspond to potential optimal trajectories of the shallow lake problem. This system  may either have one or multiple equilibria (see  \cite{W03}). In the former case, the equilibrium is a saddle point, while in
the latter there are always two saddle points. The leftmost one is characterised as the oligotrophic steady state of the lake and the rightmost one is called the eutrophic steady state.  In the Appendix of \cite{W03}, it is proved that the only admissible solution curves for optimality are the stable manifolds of the saddle points and three different cases are distinguished.

\begin{itemize}
\item The lake moves towards the oligotrophic steady state regardless its initial pollution level, $x_0$.
\item The lake moves towards the eutrophic steady state regardless its initial pollution level, $x_0$.
\item There exists a threshold value, $x_*$, of the initial pollution level: if $x_0<x_*$, then the lake moves towards the oligotrophic steady state, whereas if $x_0>x_*$, the lake moves towards the eutrophic steady state. The point $x_*$ can either be a repeller or an indifference point (Skiba point). When $x_*$ is a repeller, it is itself a steady state and the resulting policy is everywhere single-valued. On the other hand, indifference points are initial states for which there are two distinct controls corresponding to the same total benefit. One of these controls leads to the oligotrophic steady-state while the other one leads to the eutrophic steady-state. In this case, the resulting policy is everywhere single-valued except for the indifference point, at which it may take two values, see Figure \ref{skibapic}.
\end{itemize}

\begin{figure}[ht]
\centering
\includegraphics[width=0.8\textwidth]{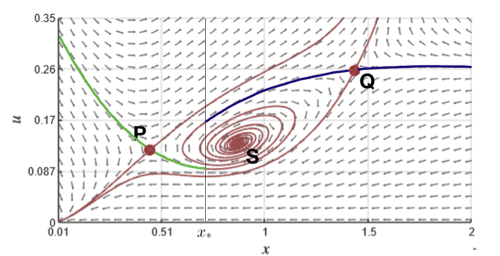}
\caption{Part of the phase plane of the system \eqref{slakes}. The points P, Q are the saddle steady-states, the point S is a vortex and $x_*$ is the Skiba point. The green and the blue curve form the optimal solution, which is everywhere single-valued except for the Skiba point whereat the optimal control may take two values.}
\label{skibapic}
\end{figure}

Based on this analysis, the candidate optimal path $(x^*,u^*)$ suggests a total benefit, called $J_P$, which, following \cite{KosZoh}, is constructed as follows:

Let $(x_0,u_0)$ be a saddle point of \eqref{slakes}. By definition of the saddle points, following \cite{KosZoh}, the total benefit $J_P$, which corresponds to system \eqref{slakes} computed at $x_0$ is given by:

\noindent
$$
    J_P(x_0)=\int \limits_{0}^{\infty} e^{-\rho t} \left( \ln u(t)-cx^2(t) \right)dt =\int \limits_{0}^{\infty} e^{-\rho t} \left( \ln u_0-cx^2_0 \right)dt=\frac{\ln u_0-cx^2_0}{\rho} 
$$

Then the total benefit at any point x can be found via the stable manifold of the corresponding saddle point (see the three cases above) through integration, as follows:

\begin{equation}\label{JP}
J_P(x)=J_P(x_0)+\int \limits_{x_0}^x \frac{dJ_P}{dk}(k)dk=J_P(x_0)+\int \limits_{x_0}^x p(k)dk=J_P(x_0)-\int \limits_{x_0}^x \frac{1}{u(k)}dk
\end{equation}
\noindent
For the second equality, we used that $\displaystyle{\frac{dJ_P}{dx}=p}$ along the trajectories of \eqref{slakes} (for a proof, see the Appendix of \cite{Xep}), while for the third one, we used \eqref{max}. The function $J_P$ will henceforth  serve as a \textit{candidate value function}, and this is how we will refer to it.

\section{Verification Principle}

\subsection{Deterministic Shallow Lake Problem}\label{det_ex}

In this section, we prove Theorems \ref{constrained_vs_det}-\ref{ver} for $\sigma=0$. 
Notice that in the deterministic case, Skiba points appear for certain regions of the parameter space. At these points, the value function $V$ fails to be differentiable and the standard approach to derive the Verification Principle (see \cite{FS} for instance) is not applicable. To prove that the optimal value is attainable, we establish a Comparison Principle (Theorem \ref{comparison}) and we use it to dominate the value function, $V$, by our candidate value function, $J_P$.
\subsubsection{Proof of Theorem \ref{constrained_vs_det}}

\begin{proof}
	In \cite{KLS} it was proved that $V$ is a viscosity solution to \eqref{OHJB} in $(0,\infty)$ for both the deterministic and the stochastic case. Following the lines of proof of Theorem 2.1 \cite{KLS}, what remains to be shown is that for $\sigma=0$, the value function $V$ satisfies (classically) eq. \eqref{OHJB} at $x=0$. Therefore, we will show that $V$ is differentiable at $0$ and
	\begin{equation}\label{dif0}
		\ln (-V'(0))+\rho V(0)+1=0
	\end{equation}
	We will first prove that $\displaystyle{\liminf \limits_{h \rightarrow 0^+} \frac{V(h)-V(0)}{h}\geq -e^{-(\rho V(0)+1)}}$.
	
	\begin{clm} \label{exh}
		There exists $h>0$ such that there exists $ \varepsilon_0>0$ such that for all $ \varepsilon \in (0,\varepsilon_0)$ if  $ u^{\varepsilon}$ is an $\varepsilon-$optimal control, then $\tau^h_{u^{\varepsilon}}:=\inf\{ t\geq 0: x_{\varepsilon}(t)\geq h \}<\infty$, where $x_{\varepsilon}(\cdot)$ is the solution of the \eqref{sldyn} with control $u^{\varepsilon}$ and $x_{\varepsilon}(0)=0$.
	\end{clm}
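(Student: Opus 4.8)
The plan is to show that driving the state below a fixed small level $h$ for all times is incompatible with near-optimality: one cannot accumulate enough benefit. Since $V$ is continuous (\cite{KLS}), $V(0)$ is a finite real number; I would argue by contradiction, assuming $u^{\varepsilon}$ is $\varepsilon$-optimal and $\tau^h_{u^{\varepsilon}}=\infty$, i.e.\ $x_{\varepsilon}(t)<h$ for every $t\ge 0$, and then exhibit an upper bound for $J(0;u^{\varepsilon})$ that sits strictly below $V(0)-\varepsilon$ once $h$ and $\varepsilon$ are small. First note that $x_{\varepsilon}(\cdot)\ge 0$ (the drift at $0$ equals $u^{\varepsilon}>0$), so $r(x_{\varepsilon})\ge 0$ by Assumption \ref{r} and $c x_{\varepsilon}^2\ge 0$.

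The first quantitative step is a cumulative-control estimate read off from the dynamics. Since $\sigma=0$, integrating \eqref{sldyn} from $0$ to $T$ and using $x_{\varepsilon}(0)=0$ gives
$$ \int_0^T u^{\varepsilon}(s)\,ds \;=\; x_{\varepsilon}(T)+b\int_0^T x_{\varepsilon}(s)\,ds-\int_0^T r(x_{\varepsilon}(s))\,ds \;\le\; h(1+bT), $$
because $0\le x_{\varepsilon}\le h$ on $[0,T]$ and $r\ge 0$. Setting $U(t):=\int_0^t u^{\varepsilon}(s)\,ds$, which then has at most linear growth, an integration by parts (boundary term vanishing) yields $\int_0^\infty e^{-\rho t}u^{\varepsilon}(t)\,dt=\rho\int_0^\infty e^{-\rho t}U(t)\,dt\le h(\rho+b)/\rho$; in particular this integral is finite.

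Next I would feed this into the benefit functional via the elementary bound $\ln u\le u/\lambda+\ln\lambda-1$, valid for all $\lambda>0$. Using $cx_{\varepsilon}^2\ge 0$,
$$ J(0;u^{\varepsilon})\;\le\;\int_0^\infty e^{-\rho t}\ln u^{\varepsilon}(t)\,dt\;\le\;\frac1\lambda\int_0^\infty e^{-\rho t}u^{\varepsilon}(t)\,dt+\frac{\ln\lambda-1}{\rho}\;\le\;\frac1\rho\Big(\frac{h(\rho+b)}{\lambda}+\ln\lambda-1\Big), $$
and minimizing the right-hand side over $\lambda$ (the minimum is at $\lambda=h(\rho+b)$) gives $J(0;u^{\varepsilon})\le \rho^{-1}\ln\!\big(h(\rho+b)\big)$. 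As $h\to 0^+$ this tends to $-\infty$, so I fix $h>0$ with $\rho^{-1}\ln(h(\rho+b))<V(0)$ and put $\varepsilon_0:=V(0)-\rho^{-1}\ln(h(\rho+b))>0$. Then for every $\varepsilon\in(0,\varepsilon_0)$, $\varepsilon$-optimality forces $J(0;u^{\varepsilon})\ge V(0)-\varepsilon>\rho^{-1}\ln(h(\rho+b))$, contradicting the bound above; hence $\tau^h_{u^{\varepsilon}}<\infty$, as claimed.

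I do not expect a genuine obstacle here; the argument is a soft a priori estimate. The points that need care are purely bookkeeping: checking $x_{\varepsilon}\ge 0$ so that the signs of $r(x_{\varepsilon})$ and $cx_{\varepsilon}^2$ are the convenient ones; recalling that $V(0)$ is a finite real number (from continuity of $V$, already established in \cite{KLS}); and verifying that the integration-by-parts / Fubini steps are legitimate, which they are because all integrands are nonnegative and $U$ grows at most linearly under the standing hypothesis $x_{\varepsilon}<h$. One should also record that the choice of $h$ (and hence $\varepsilon_0$) depends only on $\rho$, $b$ and $V(0)$, uniformly in the $\varepsilon$-optimal control $u^{\varepsilon}$, which is exactly the uniformity the claim asserts.
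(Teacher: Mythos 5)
Your argument is correct and is essentially the paper's own proof: both use the state dynamics (with $x_\varepsilon(0)=0$ and $0\le x_\varepsilon<h$, $r\ge 0$) to bound $\int_0^\infty e^{-\rho t}u^\varepsilon\,dt$ by $h(\rho+b)/\rho$, then the inequality $\ln u\le u/\lambda+\ln\lambda-1$ to force $J(0;u^\varepsilon)\to-\infty$ as $h\to 0$, contradicting finiteness of $V(0)$. The only difference is presentational: the paper sets up a diagonal contradiction with $h_n=1/n$, $\lambda=1/n$, whereas you optimize over $\lambda$ to get the clean bound $J(0;u^\varepsilon)\le\rho^{-1}\ln\bigl(h(\rho+b)\bigr)$ and thereby produce explicit admissible choices of $h$ and $\varepsilon_0$, which is a slightly tidier way to extract the same conclusion.
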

	\begin{proof}
		By contradiction, let us assume that for any $ h>0$ there exists $\{\varepsilon_n\}_{n\in \NN}$ $(\varepsilon_n \downarrow 0)$ for which there exists $u^n$, $\varepsilon_n-$optimal control with $\tau^h_{u^n}=\infty.$ Then $\forall \; n \in \NN$, if $h_n=1/n,$ $\exists \; \varepsilon_n>0$ $(\varepsilon_n \downarrow 0)$ and $ u^n$ $\varepsilon_n-$optimal control such that $\tau^h_n:=\tau^h_{u^n}=\infty.$ If $x_n=x_{\eps_n}$, then $x_n(t)\leq 1/n \; \forall t\geq 0.$ \\
		Using the elementary inequality  $\displaystyle{\ln u^n \leq \ln A_n +\frac{u^n}{A_n}-1}$ with $A_n=1/n$, we find
		$$
		J(0;u^n)=\int \limits_{0}^{\infty} e^{-\rho t}(\ln u^n(t)-cx^2_n(t))dt\leq \frac{\ln (1/n) }{\rho} + n \int \limits_{0}^{\infty} e^{-\rho t}u^n(t) dt
		$$
			\begin{align*}
				\int \limits_{0}^{\infty} e^{-\rho t}u^n(t) dt&= \int \limits_{0}^{\infty} e^{-\rho t}\dot{x}_n(t)dt+\int \limits_{0}^{\infty} e^{-\rho t}\left( bx_n(t)-r\left(x_n(t)\right) \right)dt \\ 
				&=   \rho \int \limits_{0}^{\infty} e^{-\rho t}x_n(t)dt+\int \limits_{0}^{\infty} e^{-\rho t}\left( bx_n(t)-r\left(x_n(t)\right) \right) dt \\
				&\leq \frac{\rho +b}{\rho n}
			\end{align*}
		Therefore
		$$
		J(0;u^n)\leq  \frac{\ln (1/n) +\rho +b }{\rho} \; \Rightarrow  \; V(0)=\lim \limits_{n\rightarrow \infty}J(0;u^n)=- \infty
		$$
		which is a contradiction.
		
	\end{proof} 
	\begin{clm}\label{posinf}
		$\displaystyle{c(h)= \inf \limits_{0<\varepsilon <\varepsilon_0} \{\tau^h_{u^{\varepsilon}}\}>0}$, where $h$ and  $\{\tau^h_{u^{\varepsilon}}\}_{0<\varepsilon <\varepsilon_0}$ are defined in Claim \ref{exh}. 
	\end{clm}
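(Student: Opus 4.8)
The plan is to argue by contradiction. If $c(h)=0$, then for every $\delta>0$ there are $\varepsilon\in(0,\varepsilon_0)$ and an $\varepsilon$-optimal control $u^\varepsilon$ whose first hitting time $\tau:=\tau^h_{u^\varepsilon}$ of the level $h$ satisfies $\tau<\delta$; I will show that when $\tau$ is small the benefit accrued before reaching $h$ is negligible, forcing $V(0)$ to be essentially no larger than $V(h)$ and thereby contradicting the strict monotonicity of $V$.

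The first ingredient is a uniform a priori bound on the mass of the control up to time $\tau$. Since $\sigma=0$, the trajectory $x_\varepsilon$ is continuous with $x_\varepsilon(0)=0$ and $x_\varepsilon(t)\in[0,h]$ for $t\in[0,\tau]$; integrating $\dot x_\varepsilon=u^\varepsilon-bx_\varepsilon+r(x_\varepsilon)$ over $[0,\tau]$ and using $0\le r(x_\varepsilon)\le a$ and $0\le bx_\varepsilon\le bh$ gives $h-a\tau\le\int_0^\tau u^\varepsilon\,dt\le h+bh\tau$, so in particular $\int_0^\tau u^\varepsilon\,dt\le 2h$ once $\tau$ is small. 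Next I would bound the running benefit on $[0,\tau]$: by the tangent‑line inequality $\ln u\le\ln A-1+u/A$ taken at $A=2h/\tau$ (for which $\ln A-1\ge0$ when $\tau$ is small, so that the inequality survives multiplication by $e^{-\rho t}\le1$), together with the mass bound and $cx_\varepsilon^2\ge0$, one obtains $\int_0^\tau e^{-\rho t}(\ln u^\varepsilon-cx_\varepsilon^2)\,dt\le\tau\ln(2h/\tau)$, and $\tau\ln(2h/\tau)\to0$ as $\tau\to0^+$.

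The next step is to split the functional at time $\tau$. Because the dynamics is autonomous and deterministic, $J(0;u^\varepsilon)=\int_0^\tau e^{-\rho t}(\ln u^\varepsilon-cx_\varepsilon^2)\,dt+e^{-\rho\tau}J(h;u^\varepsilon(\cdot+\tau))$, where the shifted control $u^\varepsilon(\cdot+\tau)$ lies in $\mathfrak{U}_h$ and its trajectory starts at $x_\varepsilon(\tau)=h$, so that $J(h;u^\varepsilon(\cdot+\tau))\le V(h)$. Combining this with $\varepsilon$-optimality, $J(0;u^\varepsilon)\ge V(0)-\varepsilon$, and the bound of the previous step yields the key estimate $V(0)-\varepsilon\le\tau\ln(2h/\tau)+e^{-\rho\tau}V(h)$, valid for every $\varepsilon\in(0,\varepsilon_0)$ and every $\varepsilon$-optimal $u^\varepsilon$ whose hitting time $\tau^h_{u^\varepsilon}$ is small. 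To conclude, recall that $V$ is continuous and strictly decreasing (from the analysis in \cite{KLS,KL}), hence $V(0)>V(h)$; after shrinking $\varepsilon_0$ so that $\varepsilon_0<V(0)-V(h)$ — which is harmless, since the conclusion of Claim \ref{exh} persists for any smaller $\varepsilon_0$ — and observing that the right‑hand side of the key estimate tends to $V(h)<V(0)-\varepsilon_0$ as $\tau\to0^+$, one obtains $\tau_1>0$ such that the key estimate is violated as soon as $\tau^h_{u^\varepsilon}\le\tau_1$. Therefore $\tau^h_{u^\varepsilon}>\tau_1$ for every $\varepsilon\in(0,\varepsilon_0)$, i.e. $c(h)\ge\tau_1>0$. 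I expect the only genuinely delicate point to be the decomposition of $J$ at $\tau$: one has to verify that the shifted control belongs to $\mathfrak{U}_h$ and that each term is a well‑defined finite quantity — a little care is needed because $\ln u^\varepsilon$ need not be bounded below — but this is routine, whereas the strict monotonicity of $V$ is imported from the earlier literature rather than re‑established here.
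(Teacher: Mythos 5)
Your proof is correct and follows essentially the same approach as the paper's: split the cost at the hitting time $\tau^h_{u^\varepsilon}$, use $\int_0^{\tau} u^\varepsilon\,dt\to h$ (from integrating the state equation on $[0,\tau]$) together with the concavity of the logarithm to show the running benefit on $[0,\tau]$ vanishes as $\tau\to 0$, and contradict the strict monotonicity $V(0)>V(h)$. The only cosmetic differences are that the paper packages the concavity estimate through Jensen's inequality with respect to the normalized measure $e^{-\rho t}\,dt/\phi_n$ rather than the tangent-line bound $\ln u\le\ln A-1+u/A$, and argues along a sequence $\varepsilon_n\downarrow 0$ with $\tau^h_{u^n}\to 0$ rather than producing an explicit positive lower bound $\tau_1$ after shrinking $\varepsilon_0$ below $V(0)-V(h)$, the latter being a slightly tidier way to close the final step.
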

	\begin{proof}
 
	By contradiction, let us assume that $\displaystyle{ \inf \limits_{0<\varepsilon <\varepsilon_0} \{\tau^h_{u^{\varepsilon}}\}=0}$. Then there exists a  subsequence of stopping times $\{\tau^h_{u^n}\}_{n \in \NN}$ corresponding to a sequence in $ (0,\varepsilon_0)$ $\varepsilon_n \downarrow 0$ such that $\displaystyle{\tau^h_{u^n} \rightarrow 0}$. \\
		Then, since for all  $t\in \left(0,\tau^h_{u^n}\right), n\in \NN,$
		$$
		h=\int \limits_0^{\tau^h_{u^n}}\left( u^n(t)-bx_n(t)+r\left(x_n(t)\right) \right) dt$$  and
		$$0\leq x_n(t) \leq h$$
		
		we have
		\begin{equation}\label{limint}
			\lim \limits_{n\rightarrow \infty} \int \limits_0^{\tau^h_{u^n}}u^n(t) dt=h
		\end{equation}
		Then for $\displaystyle{\phi_n=\frac{1-e^{-\rho \tau^h_{u^n}}}{\rho}}$,  Jensen's inequality gives:
		\begin{equation}
			\begin{split}
				J(0;u^n) & \leq \int \limits_{0}^{\tau^h_{u^n}} e^{-\rho t}(\ln u^n(t)-c{x^2}_n(t))dt + e^{-\rho \tau^h_{u^n}}V(h)\\ &\leq \phi_n \ln \left( \frac{1}{\phi_n} \int \limits_{0}^{\tau^h_{u^n}} e^{-\rho t} u^n(t)dt \right)+V(h)\\
				& \leq \phi_n \ln \left( \frac{1}{\phi_n} \right) +\phi_n \ln \left( \int \limits_{0}^{\tau^h_{u^n}}  u^n(t)dt \right)+V(h)
			\end{split}
		\end{equation}
		Since $\lim\limits_{n\rightarrow \infty} \phi_n=0$, by \eqref{limint}, we get:
		$$
		V(0)=\lim \limits_{n\rightarrow \infty} 	J(0;u^n)\leq V(h)
		$$
		which is a contradiction since $V$ is strictly decreasing (see Proposition 3(ii) \cite{KL}).
	\end{proof}
	\noindent
	We consider now $h,\; u^{\varepsilon}, \tau^h_{u^{\varepsilon}}$ and $x_{\varepsilon}$ as in Claim \ref{exh}. Then for all $\eps \in \left(0, \eps_0\right)$
	$$
	V(0)-\varepsilon<J(0;u^{\eps})<\int \limits_{0}^{\tau^h_{u^{\eps}}} e^{-\rho t}(\ln u^{\eps}(t)-cx^2_{\eps}(t))dt + e^{-\rho \tau^h_{u^{\eps}}}V(h)
	$$
	Using the elementary inequality $\displaystyle{\ln u^{\eps} \leq \ln A +\frac{u^{\eps}}{A}-1, \text{ for }  A>0}$, we\\ find that
	$$
	V(0)-\eps< \frac{\ln A -1}{\rho} \left(1-e^{-\rho\tau^h_{u^{\eps}}}\right) + \frac{1}{A} \int \limits_{0}^{\tau^h_{u^{\eps}}} e^{-\rho t}u^{\eps}(t)dt + e^{-\rho \tau^h_{u^{\eps}}}V(h)
	$$
	Moreover,
	\begin{align*}
	e^{-\rho \tau^h_{u^{\eps}}}h&=\int \limits_0^{\tau^h_{u^{\eps}}} e^{-\rho t} \left( u^{\eps}(t) -(b+\rho)x_{\eps}(t)+ r\left(x_{\eps}(t)\right) \right) dt \\ &\geq \int \limits_0^{\tau^h_{u^{\eps}}} e^{-\rho t}  u^{\eps}(t)dt - \frac{1-e^{ -\rho\tau^h_{ u^{\eps} } }}{\rho}  (b+\rho)h
	\end{align*}
	
	$$
	V(0)-\eps< \left(\ln A -1+\frac{(b+\rho)h}{A}\right) \frac{1-e^{ -\rho\tau^h_{ u^{\eps} } }}{\rho}   + e^{-\rho \tau^h_{u^{\eps}}}V(h) + \frac{he^{-\rho \tau^h_{u^{\eps}}} }{A}
	$$
	$$
	V(0)-V(h) \leq \frac{1-e^{ -\rho\tau^h_{ u^{\eps} } }}{\rho}  \left( \ln A -1+\frac{(b+\rho)h}{A}-\frac{\rho h}{A} -\rho V(h)  \right) +\eps +\frac{h}{A}
	$$
	Choosing $\displaystyle{A=\frac{h}{V(0)-V(h)}},$ we find
	\begin{equation*}
		-\ln \left(- \frac{V(h)-V(0)}{h}\right) -1+b(V(0)-V(h)) -\rho V(0) \geq -\frac{\rho \eps}{1-e^{-\rho \tau^h_{ u^{\eps} } }}
	\end{equation*}
	%
	
	$$
	\frac{V(h)-V(0)}{h} \geq  -\exp \left( \left(- 1-\rho V(0) + b(V(0)-V(h) + \frac{\rho \eps}{1-e^{-\rho c(h) }} \right) \right)
	$$
	where the last inequality follows from Claim \ref{posinf}.
	Letting now $\eps \rightarrow 0^+$ and then $h \rightarrow 0^+$,  we find:
	$$
	\liminf \limits_{h\rightarrow 0^+} \frac{V(h)-V(0)}{h} \geq  -e^{ \left( -1-\rho V(0)  \right)}
	$$ 
	
	Moreover, from Proposition 3(ii) \cite{KL}, we have that 
	$$
	\limsup \limits_{h\rightarrow 0^+} \frac{V(h)-V(0)}{h} \leq  -e^{ \left( -1-\rho V(0)  \right)}
	$$ 
	and this gives \eqref{dif0} and concludes the proof.

\end{proof}

\subsubsection{Proof of Theorem \ref{comparison}}

The proof of Theorem \ref{comparison} is along the lines of the  proof of Theorem 2.2 in \cite{KLS}. Based on Lemma 2.1 \cite{KL}, given a subsolution $u$ and a supersolution $v$ of \eqref{OHJB} satisfying certain assumptions, the difference $u-v$ is a subsolution of the corresponding linearized equation. The difference with the proof presented in \cite{KLS} is that we need to add a perturbation term in the linearized equation in order to be able to find a smooth solution and to compare it with the difference $u-v$.

\begin{proof}
  Let $\eta>0$ sufficiently small. We  consider the ordinary differential equation 
	\begin{equation}\label{ode11}
		\rho w +\big(bx-(a+c^*)\big)w'-\frac{1}{2}\eta x^2w''=0,
	\end{equation}
	which has a solution of the form 
	\begin{equation}\label{sol1}
		w(x)=x^{-k}\mathcal{J}(\frac{2a+2c^*}{\eta x}),
	\end{equation}
	where $k$ is a root of  
	\begin{equation}\label{root2}
		k^2+\Big(1+\frac{2b}{\eta}\Big)k-\frac{2\rho}{\eta}=0
	\end{equation}
	and   $\mathcal{J}$ a solution of the degenerate hypergeometric equation 
	\begin{equation}\label{confluent1}
		xy''+(\tilde{b}-x)y'-\tilde{a}y=0
	\end{equation}
	with $\tilde{a}=k$ and $\tilde{b}=2(k+1+b/\eta)$.
	
 We choose $k$ to be the negative root of \eqref{root2}. 
 We further choose $\mathcal{J}$ to be the Tricomi solution of \eqref{confluent1} which satisfies
	\[
	\mathcal{J}(0)>0\qquad\text{and} \qquad \mathcal{J}(x)=x^{-k}\big(1+\frac{2\rho}{\eta x}+o(x^{-1})\big)\quad\text{as }x\to\infty.
	\]
	
	With this choice, the function $w$ defined in \eqref{sol1} for $x>0$ and by continuity at $x=0$, satisfies $w(0), w'(0) >0$ and $w(x)\sim \mathcal{J}(0) x^{-k}$, as $x\to\infty$. We choose $\eta>0$ sufficiently small so that $k<-q.$
	
	Note that $w$ is increasing in $[0,\infty)$ since it would otherwise have a positive local maximum and this is impossible by \eqref{ode11}. 
	
	Set now $\psi=u-v$ and consider $\epsilon>0$. Since $\psi-\epsilon w<0$  in a neighborhood of infinity, there exists $x^\epsilon \in [0,\infty)$ such that
	\[
	\max_{x\ge 0} \big(\psi(x)-\epsilon w(x)\big)=\psi(x^\epsilon)-\epsilon w(x^\epsilon).
	\]
	
	By Lemma 2.1 \cite{KL} (with $\sigma=0$), $\psi $ is a subsolution of 
	$$
	\rho \psi +bx D\psi-(a+c^*)|D\psi|=0 \; \in [0,\infty)
	$$
	We now use $\epsilon w$ as a test function to find that
	\begin{align*}
		0&\ge \rho \psi({x}^\epsilon) +\epsilon b{x}^\epsilon w'({x}^\epsilon)-\epsilon (a+c^*)|w'({x}^\epsilon))| \\ & \overset{\eqref{ode11}}{=}\rho \left( \psi(x^{\varepsilon}) - \varepsilon w(x^{\varepsilon})\right)+\frac{\eta}{2} (x^{\varepsilon})^2 \varepsilon w''(x^{\varepsilon})
	\end{align*}
	$$
	\rho \left( \psi(x^{\varepsilon}) - \varepsilon w(x^{\varepsilon})\right)\leq -\frac{1}{2}\eta (x^{\varepsilon})^2 \varepsilon w''(x^{\varepsilon})
	$$
	Note that the function $g(x)=\frac{1}{2}\eta x^2  w''(x)
	$ is bounded from below because from \eqref{ode11} for $x>(a+c^*)/b$ we have
	$g(x)= \rho w(x)+ (bx-(a+c^*))w'(x)\geq \rho w(0)
	$, since $w$ is increasing in $[0,\infty)$. Therefore,
	$$
	\rho \left( \psi(x) - \varepsilon w(x)\right)\leq - \varepsilon \inf \limits_{[0,\infty)}g \; \;  \text{for all} \; x\in [0,\infty)
	$$
	Since $\varepsilon$ is arbitrary, this proves the claim.
\end{proof}

\subsubsection{Proof of Theorem 3}

With the Comparison Principle at our disposal, we proceed to show that our candidate value function satisfies the assumptions made for the supersolution $v$ in Theorem \ref{comparison}.

\begin{propos}\label{propofJp}
	Let $J_P$ be the candidate value function of \eqref{JP}. Then,
	\begin{enumerate}[i.]
	\item $J_P$ is decreasing.
		\item $J_P$ is a viscosity solution to \eqref{OHJB} with $\sigma=0$, in $(0,\infty).$
		\item There exists $c_2>0$, such that $J_P(x)\geq -c_2(1+x^2)$,  for all $x\geq 0$. 
	\end{enumerate}
\end{propos}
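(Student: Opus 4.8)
\noindent\textit{Proof plan.}
For (i), recall that on each branch of the construction \eqref{JP} --- the stable manifold of the oligotrophic saddle for small $x$, that of the eutrophic saddle for large $x$ --- we have $J_P(x)=J_P(x_0)-\int_{x_0}^{x}u(k)^{-1}\,dk$ with $u(\cdot)>0$ the associated feedback, so $J_P$ is $C^1$ there with $J_P'=-1/u<0$. In the single--equilibrium cases $J_P$ is one such smooth branch and we are done. In the threshold case $J_P$ is defined branch-wise on $[0,x_*]$ and on $[x_*,\infty)$, and the two branches agree at $x_*$ (at an indifference point this is the very definition of such a point, cf. Section 3); hence $J_P$ is continuous, and since each branch is strictly decreasing, so is $J_P$ on all of $[0,\infty)$.

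For (ii), on the complement of the finite exceptional set $J_P\in C^1$ and I would verify \eqref{OHJB} with $\sigma=0$ classically via the flow identity: writing $\hat u$ for the feedback one has $J_P(x)=J(x;\hat u)$, and differentiating $J_P(x)=\int_0^h e^{-\rho t}\big(\ln\hat u(\hat x(t))-c\hat x(t)^2\big)\,dt+e^{-\rho h}J_P(\hat x(h))$ at $h=0$ gives $\rho J_P(x)=\tilde H\big(x,\hat u(x),J_P'(x)\big)$; since $\hat u(x)=-1/J_P'(x)=\argmax_u \tilde H(x,u,J_P'(x))$ by \eqref{max}, the right-hand side equals $H(x,J_P'(x))$, which is \eqref{OHJB}. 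At an indifference point $x_*$, $J_P$ has one-sided derivatives $p_-:=J_P'(x_*^-)<p_+:=J_P'(x_*^+)<0$ and the corner is \emph{convex}, because near $x_*$ the function $J_P$ is the larger of the two smooth candidate benefits; hence $D^+J_P(x_*)=\emptyset$ and the subsolution inequality is vacuous, while $D^-J_P(x_*)=[p_-,p_+]$. Letting $x\to x_*$ from the left, resp. from the right, in the classical identity yields $\rho J_P(x_*)-H(x_*,p_-)=0$, resp. $\rho J_P(x_*)-H(x_*,p_+)=0$; since $p\mapsto H(x_*,p)$ is convex on $(-\infty,0)$ (its $p$-second derivative is $1/p^2$), the concave function $p\mapsto \rho J_P(x_*)-H(x_*,p)$ vanishes at both endpoints of $[p_-,p_+]$ and is therefore nonnegative there, which is exactly the supersolution inequality at $x_*$. (Were the corner concave this last step would produce the wrong sign, so the convexity of the kink at the Skiba point is the one detail one must not skip.)

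For (iii), which is the crux, I would use the integral formula \eqref{JP} along the eutrophic branch: a trajectory started at a large $x$ stays in $[x_Q,x]$, where $x_Q$ is the eutrophic saddle (this interval contains no further equilibrium, so $x(\cdot)$ is monotone), and the feedback along it is bounded below by $m(x):=\min_{[x_Q,x]}u$, whence $J_P(x)\ge \rho^{-1}\big(\ln m(x)-cx^2\big)$; thus it suffices to rule out super-polynomial decay of the feedback, e.g. $u(y)\ge \delta/(1+y)$ for all large $y$. This follows from the large-$x$ analysis of the invariant curve $u=u(x)$, which solves $\frac{du}{dx}=\frac{g(u,x)}{f(u,x)}=\frac{2cxu^2-(\rho+b-r'(x))u}{u-bx+r(x)}$: using Assumption \ref{r}(3)--(5) ($r(x)\to a$, $(a-r(x))x\to C$, $r'(x)\to 0$), a dominant-balance/invariant-region argument gives $x\,u(x)\to\frac{\rho+2b}{2c}$ as $x\to\infty$, so $1/u(x)=O(x)$ and, integrating in \eqref{JP}, $J_P(x)=-\frac{c}{\rho+2b}x^2+o(x^2)$; together with continuity of $J_P$ on compact sets this gives $J_P\ge -c_2(1+x^2)$ on $[0,\infty)$ for $c_2$ large enough. (Alternatively, if a quadratic lower bound on $V$ or $J_P$ is already available from \cite{KL,KLS,KosZoh} --- and it essentially is, given $|k(\sigma)|>2$ --- then (iii) is immediate.) The only genuinely technical step in the whole argument is this asymptotics of the invariant curve.
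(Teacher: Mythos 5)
Parts (i) and (ii) are essentially correct and aligned with the paper: for (i) the paper gives the same one‑line argument $J_P'=-1/u<0$ along the stable manifold; for (ii) the paper simply cites \cite{KosZoh} for the Skiba point, whereas you supply the argument in full. Your argument there is sound (the upper‑envelope/convex‑kink observation makes $D^+J_P(x_*)=\emptyset$, and concavity of $p\mapsto \rho J_P(x_*)-H(x_*,p)$ on $[p_-,p_+]$ with vanishing endpoints gives the supersolution inequality), and is more self‑contained than what the paper prints.

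Part (iii) is where you diverge, and where there are issues. The paper's proof is a one‑line trick that completely sidesteps any asymptotic analysis of the invariant curve: since $J_P$ solves \eqref{OHJB} classically for large $x$ and $J_P'<0$, one simply evaluates the supremum defining $H$ at the specific $u=bx-r(x)$ (positive and $>1$ for $x>(a+1)/b$), so the $J_P'$‑term vanishes and $\rho J_P(x)\ge \ln(bx-r(x))-cx^2\ge -cx^2$; continuity then gives the bound on $[0,\infty)$. Your route via $xu(x)\to(\rho+2b)/(2c)$ along the eutrophic stable manifold is plausible (it is consistent with a formal balance of $g/f$) and would yield the stronger statement $J_P(x)\sim -\frac{c}{\rho+2b}x^2$, but you only sketch the dominant‑balance step, and making it rigorous (including existence of the unbounded branch of the stable manifold and ruling out other asymptotic regimes) is real work that the paper's trick avoids entirely. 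More importantly, your "alternatively" fallback is wrong in direction: a priori one has $J_P=J(\cdot;\hat u)\le V$ (since $\hat u$ is admissible), not $J_P\ge V$, so a known quadratic \emph{lower} bound on $V$ does not transfer to $J_P$; indeed the whole point of Theorem \ref{comparison} in the paper is to establish the reverse inequality $V\le J_P$, and Proposition \ref{propofJp}(iii) is an input to that argument, so invoking a bound on $V$ here would be circular.
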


\begin{proof}
	\begin{enumerate}[i.]
	\item Along the stable manifold  $\displaystyle{\frac{dJ_P}{dx}=-\frac{1}{u}}<0$. Therefore, $J_P$ is decreasing. Moreover, $\dot{u}=g(x,u)<0$ for $x$ close to zero, which implies that  $J_P(0)=\lim \limits_{x\rightarrow 0} J_P(x)<\infty.$ 
\item The total benefit $J_P(x)$  of \eqref{JP}, for $x$ different from the Skiba point (if there exists such a point), is the classical solution to  eq. \eqref{OHJB}  constructed by the method of characteristics. In the case of a Skiba point, it was proved in \cite{KosZoh} that $J_P$ is also a viscosity solution to \eqref{OHJB} at the Skiba point. 
		\item	Let $x>\frac{a+1}{b}$. Since $J_P$ is a classical solution to \eqref{OHJB} at $x$ and $J_P'<0$, we have that 
		\begin{align*}
		\rho J_P(x)&=\sup \limits_{u>0} \bigg\{ (u-bx+r(x))J'_P(x)+\ln u -x^2 \bigg\} \\ &\geq \ln \left(bx-r(x)\right) -x^2\geq -x^2
		\end{align*}
		By continuity of $J_p$ in $[0,\infty)$, we conclude the proof. 
	\end{enumerate}
\end{proof} 

We have now collected all the key ingredients to establish our main existence result.

\noindent
\textit{Proof of Theorem \ref{ver}($\sigma=0$):}
	It is a direct consequence of Theorems \ref{constrained_vs_det}, \ref{comparison}, Proposition \ref{propofJp} and Proposition 3 in \cite{KL} that $V\leq J_P$. Hence, the control suggested by the Pontryagin Maximun Principle is indeed optimal  and it is given in the feedback form as $u^*(t)=u(x(t))=-\frac{1}{ V'(x(t))}.$ \qed

\subsection{Stochastic Shallow Lake Problem}

In this section, we prove existence in the presence of noise (Theorem \ref{ver} for $\sigma>0$). In \cite{KLS, KL} it was proved that $V$ is a viscosity solution of \eqref{OHJB} and from classical results for uniformly elliptic operators, it follows that $V$ is actually a classical solution of \eqref{OHJB} in $(0,\infty)$.  In fact, it can be proved that $V$ is actually two times differentiable at $x=0$ and in this way, it follows that $V$ is a classical solution of \eqref{OHJB} in $[0,\infty).$ This result is stated in Proposition \ref{constrained_vs}.

\begin{propos}\label{constrained_vs}
	If $0<\sigma^2<\rho+2b,$ the value function $V$ is a classical solution of the equation \eqref{OHJB} in $[0,\infty)$ and $$V''(0)=-(\rho+b-r'(0))\left(V'(0)\right)^2$$
\end{propos}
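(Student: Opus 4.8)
The plan is to upgrade the interior regularity of $V$, which is already known on $(0,\infty)$, to two-sided regularity at the boundary point $x=0$. From \cite{KLS,KL} I would use that $V$ is a continuous viscosity solution of \eqref{OHJB}, strictly decreasing, with $-V'$ locally bounded and bounded away from $0$; since the operator in \eqref{OHJB} is uniformly elliptic on compact subsets of $(0,\infty)$ with $C^1$ coefficients, $V$ is a classical solution there, and a standard Schauder bootstrap gives $V\in C^3\big((0,\infty)\big)$. The first step is to establish $V\in C^1([0,\infty))$, i.e. that $\lim_{x\to0^+}V'(x)$ exists (it then lies in $(-\infty,0)$ by the above bounds); this follows either from \cite{KLS}, or by rerunning the argument of Theorem~\ref{constrained_vs_det} in expectation along $\varepsilon$-optimal controls, the martingale part of \eqref{sldyn} being controlled by stopping at the first time the state reaches a small level $h$. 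Once this is known, write \eqref{OHJB} as $\tfrac{1}{2}\sigma^2x^2V''(x)=g(x)$ with $g(x):=\rho V(x)+(bx-r(x))V'(x)+\ln(-V'(x))+cx^2+1$, which is continuous on $[0,\infty)$; since $V'$ is continuous at $0$ we must have $g(0)=0$, for otherwise $V''(x)\sim \tfrac{2g(0)}{\sigma^2x^2}$ would be non-integrable near $0$ and $V'$ would blow up. This is precisely the boundary relation $\rho V(0)+\ln(-V'(0))+1=0$.

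The core of the argument is the behaviour of $V''$ at $0$. Differentiating \eqref{OHJB} on $(0,\infty)$ (legitimate since $V\in C^3$ there) and solving for $q:=V''$ yields the first-order linear equation
\begin{equation}\label{qode}
\tfrac{1}{2}\sigma^2x^2 q'(x)=\beta(x)\,q(x)+\delta(x),\qquad x>0,
\end{equation}
with $\beta(x)=\frac{1}{V'(x)}-\big(r(x)-bx\big)-\sigma^2x$ and $\delta(x)=\big(\rho+b-r'(x)\big)V'(x)+2cx$. By the first step and the bounds on $V'$, $\beta$ and $\delta$ extend continuously to $x=0$ with $\beta(0)=\frac{1}{V'(0)}<0$ and $\delta(0)=\big(\rho+b-r'(0)\big)V'(0)$, and $\beta<0$ stays bounded away from $0$ on some interval $(0,x_1]$.

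Equation \eqref{qode} is singular at $0$, with an irregular singular point: the homogeneous solution $\Phi(x)=\exp\!\big(\int_{x_1}^x\tfrac{2\beta(s)}{\sigma^2s^2}\,ds\big)$, normalized by $\Phi(x_1)=1$, satisfies $\Phi(x)\to+\infty$ as $x\to0^+$, in fact like $\exp(c/x)$ for some $c>0$; dually $\Phi(x)^{-1}$ decays like $\exp(-c/x)$, which dominates the $x^{-2}$ singularity of $\delta(x)/x^2$, so $\Phi(\cdot)^{-1}\delta(\cdot)/(\cdot)^2$ is integrable near $0$. By variation of parameters, $q(x)=\Phi(x)\big[q(x_1)+\int_{x_1}^x\Phi(s)^{-1}\tfrac{2\delta(s)}{\sigma^2s^2}\,ds\big]$, and the bracket has a finite limit as $x\to0^+$. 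I would then argue this limit must vanish: otherwise $q=V''$ would grow like a nonzero multiple of $\Phi$ near $0$, whence $\int_x^{x_1}|V''|\to\infty$ and $V'$ would be unbounded near $0$, contradicting the first step. Therefore $q(x)=\Phi(x)\int_0^x\Phi(s)^{-1}\tfrac{2\delta(s)}{\sigma^2s^2}\,ds$, a $0/0$ expression whose denominator $\Phi(x)^{-1}$ has derivative $-\Phi(x)^{-1}\tfrac{2\beta(x)}{\sigma^2x^2}$; L'Hôpital's rule gives
\[
\lim_{x\to0^+}V''(x)=\lim_{x\to0^+}\frac{\delta(x)}{-\beta(x)}=-\frac{\delta(0)}{\beta(0)}=-\big(\rho+b-r'(0)\big)\big(V'(0)\big)^2 .
\]
Hence $V\in C^2([0,\infty))$ with the asserted value of $V''(0)$; in particular $V''$ is bounded near $0$, so $\tfrac{1}{2}\sigma^2x^2V''(x)\to0$ and \eqref{OHJB} holds classically at $x=0$ as well.

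The step I expect to be the main obstacle is the selection argument for \eqref{qode}: discarding the exponentially growing homogeneous mode using only the a priori boundedness of $V'$ at the boundary, together with the careful verification that the variation-of-parameters integral converges and that L'Hôpital applies at the irregular singular point. If the $C^1$-regularity of $V$ at $0$ is not simply quoted from \cite{KLS}, establishing it for $\sigma>0$ is the other delicate point, since the trajectory estimates of Theorem~\ref{constrained_vs_det} must then be carried out in expectation, with the cutoff handling both the stochastic integral and a possible blow-up of the benefit for small controls.
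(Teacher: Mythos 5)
Your argument is correct and lands on the same essential mechanism the paper uses, but there are several execution differences worth noting. The paper works directly with $Q(x):=\rho V+(bx-r)V'+\ln(-V')+cx^2+1=\tfrac{1}{2}\sigma^2x^2V''$, whose derivative yields a first-order linear ODE requiring only $V\in C^2$; you instead differentiate the HJB equation to get the ODE for $q=V''$ itself, which costs you a (harmless, but extra) bootstrap to $V\in C^3$. The two ODEs are conjugate via $Q=\tfrac12\sigma^2x^2q$, so this is cosmetic. For the boundary condition $Q(0)=0$, the paper reads it off from the explicit bounds in \cite{KL} that give $V'(0)=-e^{-\rho V(0)-1}$, whereas you derive it from the non-integrability of $V''$ that would otherwise ensue; both are valid once $V\in C^1([0,\infty))$ is in hand. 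For ruling out the growing homogeneous mode, the paper sends the base point $\varepsilon\to 0$ and uses $Q(\varepsilon)\to 0$ together with $\int_\varepsilon^x\alpha\to-\infty$, while you fix the base point $x_1$, send the evaluation point to $0$, and argue by contradiction via integrability of $V''$; again equivalent. The genuine improvement in your write-up is the final step: you apply L'H\^opital's rule at the irregular singular point (noting the denominator's derivative is nonvanishing near $0$ because $\beta(0)=1/V'(0)\neq 0$), which cleanly gives $\lim_{x\to 0^+}V''(x)=-\delta(0)/\beta(0)$, whereas the paper runs an explicit $\eta$-bracketing and a substitution to evaluate the Laplace-type limit of $Q(x)/x^2$. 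Both get $-(\rho+b-r'(0))(V'(0))^2$. Your verification of the L'H\^opital hypotheses is the point that needed care, and you address it adequately.
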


\begin{proof}
	Based on Theorem 2.1 of \cite{KL}, the value function $V$ is a constrained viscosity solution of \eqref{OHJB} in $[0,\infty)$. What remains to be shown is that $V$ satisfies (OHJB) at $x=0$ in the classical sense. Based on Corollary 2.1 \cite{KL}, it suffices to show that $V$ is two times differentiable at $x=0$ and $C^1([0,\infty)).$ We will first show that $V\in C^1([0,\infty))$.\\
	Let $0<x<b$. From Propositions 3(iii),4 \cite{KL}, we have that:
	$$
	-\Phi_{\sigma}(x)\leq V'(x) \leq -c(x)
	$$
	Taking $x\rightarrow 0$, we have that $\lim \limits_{x\rightarrow 0}V'(x)=e^{- \rho V(0)-1 } = V'(0)$.\\
	\noindent 
	Therefore, $V\in C^1([0,\infty))$.\\
	Now, we proceed with the second derivative of $V$.\\
		From eq. \eqref{OHJB}, we have that for $x>0$:
	\begin{equation}\label{v2}	
		V''(x)=\frac{2}{\sigma^2} \left[ \frac{\rho V(x)+\left(bx-r(x)\right)V'(x)+\ln (-V'(x))+1+cx^2}{x^2} \right] 
	\end{equation}
	Setting $Q(x)=\rho V(x)+(bx-r(x))V'(x)+\ln (-V'(x))+1+cx^2$, we have that
	
	$$Q'(x)=(\rho+b-r'(x))V'(x)+2cx+(bx-r(x))V''(x)+\frac{V''(x)}{V'(x)} \; \Rightarrow $$
	
	\begin{align*}
	Q'(x)=-\frac{2}{\sigma^2x^2}\left( - \frac{1}{V'(x)}+r(x)-bx \right) Q(x) &+ (\rho +b-r'(x))V'(x)+2cx
	\end{align*}
	If we denote
	\[
	\alpha(x)=-\frac{2}{\sigma^2x^2}\left( - \frac{1}{V'(x)}+r(x)-bx \right) \qquad\text{and}\qquad \beta(x)=(\rho +b-r'(x))V'(x)+2cx,
	\]
	then for any $\varepsilon >0$ we have
%
	$$
	Q(x)=Q(\varepsilon)e^{\int \limits_{\varepsilon}^{x}\alpha (s)ds}+\int_{\varepsilon}^{x}\beta(s)e^{\int_{s}^{x}\alpha (t)dt}ds
	$$
	
	Since 
	$\displaystyle{
		\lim \limits_{\varepsilon \rightarrow 0^+}\int \limits_{\varepsilon}^x a(s)ds=-\infty}
	$
	and $\displaystyle{\lim \limits_{\varepsilon \rightarrow 0}Q(\varepsilon)=0}$, 
	
	$$
	\frac{1}{2}\sigma^2V''(x)=\frac{Q(x)}{x^2}=\frac{1}{x^2} {\int \limits_{0}^{x}\beta(s)e^{\int \limits_{s}^{x}\alpha (t)dt}ds}
	$$
	
	Let $0<\eta< \min\{-\beta(0), -1/V'(0)\}$. There exists $\varepsilon>0$ such that $ \forall s\in[0,\varepsilon]$
	
	$$\begin{cases}
		\beta(0)-\eta<\beta(s)<\beta(0)+\eta 
		\\
		\left( \frac{1}{V'(0)}-\eta \right)\frac{2}{\sigma^2 s^2} <\alpha(s)<  \left( \frac{1}{V'(0)}+\eta \right)\frac{2}{\sigma^2 s^2} 
	\end{cases}
	$$
	
	Then, for $x\in[0,\varepsilon]$,
	\begin{equation*}
	\frac{Q(x)}{x^2} \leq \frac{(\beta(0)+\eta)\int \limits_{0}^{x}e^{\int \limits_{s}^{x}  ( \frac{1}{V'(0)}-\eta )\frac{2dt}{\sigma^2 t^2}  }ds}{x^2} 
	=(\beta(0)+\eta)\int_0^\infty e^{ ( \frac{1}{V'(0)}-\eta )\frac{2t}{\sigma^2}}\frac{dt}{(1+tx)^2}.
	\end{equation*}
	Let now $x\to 0$, then $\eta\to 0$ to get
	\begin{equation}\label{up}
		\limsup \limits_{x\rightarrow 0}\frac{Q(x)}{x^2}\leq -\beta(0)V'(0)\sigma^2/2.
	\end{equation}
	Similarly, 
	\begin{equation}\label{bel}
		\liminf \limits_{x\rightarrow 0}\frac{Q(x)}{x^2}\geq -\beta(0)V'(0)\sigma^2/2.
	\end{equation}
	Therefore, from \eqref{v2}, \eqref{up}, \eqref{bel}, we have
	
	$$
	\lim\limits_{x\rightarrow 0}V''(x)=-(\rho+b-r'(0))(V'(0))^2
	$$
	Since $V\in \mathbb{C}^1[0,\infty) \cap \mathbb{C}^2(0,\infty)$, the assertion follows. 
\end{proof}
The elliptic regularity of the value function in the presence of noise permits the adoption of the usual methodology in order to prove the existence of the optimal control. In this direction, we follow the steps described in \cite{FS}. The fact that our (candidate) optimal control is not bounded away from zero and therefore the fact that the logarithmic term in the total benefit may be $-\infty$ demands some extra technical manipulations.

\noindent
\textit{Proof of Theorem \ref{ver}($\sigma>0$):}
		Let $x(t)$ be the solution  of the sde
	$$\begin{cases} dx(t)=f(x(t),-\frac{1}{V'(x(t))})dt+\sigma x(t) dW_t \\
		x(0)=x \end{cases}$$\\
	and  $u(t)=-\frac{1}{V'(x(t))}$ the corresponding control.\\
	We apply It$\hat{o}$'s Rule to the stochastic process $g(t,x(t))=e^{-\rho t}V(x(t))$ and we find for $t\geq 0$:

 \begin{align*} V(x)&=e^{-\rho t}V(x(t))+\int \limits_{0}^t e^{-\rho s} \left(\rho V(x(s))-(r(x(s)))-bx(s)))V'(x(s)))+1\right)ds \\ &\hspace{10mm}-\int \limits_{0}^t \frac{1}{2}V''(x(s)))\sigma^2 x^2(s) ds-\int \limits_{0}^{t}e^{-\rho s}\sigma x(s)) V'(x(s)))dW_s \\ &\overset{\eqref{OHJB}}{=}e^{-\rho t}V(x(t))+\int \limits_{0}^t e^{-\rho s} (\ln (u(s))-c x^2(s))ds-\int \limits_{0}^{t}e^{-\rho s}\sigma x(s) V'(x(s))dW_s
\end{align*} 
 

We also consider the stopping times $\theta_n=\inf \{t\geq 0: \; |x(t)-x|\geq n\} \wedge n$ and taking expected values in the above relation, we get that, for all $ n \in \mathbb{N}$:
$$\displaystyle{V(x)=\EE \left[e^{-\rho \theta_n}V(x(\theta_n)) \right]+\EE \left[ \int \limits_{0}^{\theta_n} e^{-\rho s} (\ln (u(s))-cx^2(s))ds  \right]}$$

\begin{itemize}
	\item Since $V$ is decreasing, we have that $$\limsup \limits_{n\rightarrow \infty} \EE \left[e^{-\rho \theta_n}V(x(\theta_n)) \right] \leq  \lim \limits_{n\rightarrow \infty} \EE \left[e^{-\rho \theta_n} \right] V(0) =0$$
	
	\item Since $V'(x)$ is bounded from above by a negative quantity, we have that $u(s)$ is also bounded from above by a constant, say $C$. Thus, we have that:
	
	$$\lim \limits_{n\rightarrow \infty} \EE \left[ \int \limits_{0}^{\theta_n} e^{-\rho s} (C-\ln (u(s)))ds  \right]= \EE \left[ \int \limits_{0}^{\infty} e^{-\rho s} (C-\ln (u(s)))ds  \right]$$ by monotone convergence theorem. Therefore, $$\lim \limits_{n\rightarrow \infty} \EE \left[ \int \limits_{0}^{\theta_n} e^{-\rho s} \ln (u(s))ds  \right]= \EE \left[ \int \limits_{0}^{\infty} e^{-\rho s} \ln (u(s))ds  \right]$$
	
	\item Regarding the last term, we also have that
	$$\lim \limits_{n\rightarrow \infty} \EE \left[ \int \limits_{0}^{\theta_n} e^{-\rho s}x^2(s)ds  \right]= \EE \left[ \int \limits_{0}^{\infty} e^{-\rho s}x^2(s)ds  \right]$$ by monotone convergence theorem. 
	
\end{itemize}

Thus, we have that
$$V(x)\leq \EE \left[ \int \limits_{0}^{\infty} e^{-\rho s} (\ln (u(s))-cx^2(s))ds  \right] =J(x;u)\leq V(x)$$
and this concludes the proof.  \qed

\begin{rmk}
The optimal control, written in feedback form, $u^*(x)=-\frac{1}{V'(x)}$, for $\sigma>0$, is obviously a bounded and locally Lipschitz function. Therefore, the problem \eqref{sldyn} has a unique strong solution $x(\cdot)$ (see Theorem 3.4 \cite{Mao}) and the optimal control $u^*$ is admissible. Furthermore, the admissibility of the optimal control in the deterministic case is an immediate consequence, from the way it was constructed, since it is located on the stable manifold of the Pontryagin system of the lake \eqref{slakes}.
\end{rmk}


\section{Generalization of the Arrhenius Law}

In this section, we prove Lemma \ref{F0}, Theorem \ref{arrh} and Lemma \ref{genres}. In the following, in subsection \ref{application}, we consider the shallow lake problem as an application to the described methodology.

\subsection{Proof of Lemma \ref{F0}}
\begin{enumerate}[(i)]
\item We will prove this claim following the lines of proof of Lemma 3.1 in \cite{FlS}.
		Let $m=\inf\{\Omega\}>-\infty$, $M=\sup\{\Omega\}<\infty$. If $l=\max\{|m|,|M|\}$, we consider a function $\phi \in C^{\infty}(\mathbb{R})$ such that
		$$	
		\begin{array}{rl}
			\phi(x)=1 & \text{if } \; x\in \Omega \\
			\phi(x)=0 & \text{if } \; x<-2l \text{ or } x>2l \\	
			\frac{\left(\phi'(x)\right)^2}{\phi(x)}\leq C_l & \text{on supp } \phi\\
		\end{array}
		$$	
		For brevity, we write $u=V_{\eps}$, $u_1=V'_{\eps}$	and $u_{11}=V''_{\eps}$. Differentiating twice eq. \eqref{genhjb} with respect to $x$, we obtain:
		$$
		-\eps u''_{11}-H_{xx}-2H_{px}u_{11}-H_{pp}\left(u_{11}\right)^2-H_pu'_{11}+\rho u_{11}=0
		$$
		Next we define $w=\phi u_{11}$ and compute (on $\phi>0$):\\
  \vspace{5mm}
		$
		-\eps w''-H_p(x,u_1)w'+\rho w+2\eps \frac{\phi'}{\phi}w'
		$\\
		$
=\phi H_{pp}(x,u_1)\left(u_{11}\right)^2+\phi H_{xx}+2\phi H_{px} u_{11} +\eps \left(\frac{2(\phi')^2)}{\phi}-\phi''  \right) u_{11}+H_p\phi' u_{11} 
  $
  
		Let $x_0$ be a point in ($\phi>0$) at which $w$ attains a negative minimum. If for some value of $\eps$, $w$ is non-negative on ($\phi>0$), we can conclude that $V''_{\eps}(x)\geq 0$ $\forall x\in \Omega$. Then $w'(x_0)=0,$ $w(x_0)\leq 0$ and $w''(x_0)\geq 0$. 
		Moreover, by Assumption \ref{asm2}(iii) there exists  $\eta_0>0$ such that $H_{pp}(x,u_1)>\eta_0>0$ for all $x\in \Omega$. \\
		Therefore, we have at $x=x_0$
		$$
  \eta_0 \left(w(x_0)\right)^2\leq -\phi^2 H_{xx}+ \left( -2\phi H_{px}  -\eps C_l + \eps \phi'' -H_p\phi'\right) w(x_0)
		$$
		\begin{equation*}
				\Rightarrow \eta_0 w^2(x_0)\leq 
				 A+B w(x_0) \; \Rightarrow \; w(x_0)\geq \tilde{C}
		\end{equation*}
		where $\tilde{C}$ constant independent of $\eps$ (and $x_0).$ Here, we used the fact that since $u_{1}=V'_{\eps}$ is bounded on supp $\phi$ by Assumption \ref{asm2}(ii) (uniformly with respect to $\eps$), each partial derivative of $H$ is bounded at $(x,u_1(x))$ for $x\in$ supp $\phi$.\\
		Therefore $V''_{\eps}(x)\geq w(x_0)\geq \tilde{C}$, $\forall x\in\Omega,\; \eps\leq \eps_0$. \hfill$\Box$\
		\item Since $V''_{\eps}$ is locally uniformly bounded with respect to $\eps$ and $V_0$ is almost everywhere differentiable, it follows (see Theorem 3.2 (i) \cite{FlS}) that $V'_{\eps} \rightarrow V'_0$  almost everywhere. Therefore, the continuity of $g$ and the boundedness of $V'_{\eps}$ on compact sets imply, by bounded convergence theorem, that $F_{\eps}$ converges locally uniformly to a function $F_0$, which is almost everywhere differentiable with $$F'_0(x)=-f(x,g(x,V'_0(x))).$$  \hfill$\Box$\
\end{enumerate}


\subsection{Proof of Theorem \ref{arrh}}
	
	For every $\eps>0,$ we know that the function $h(x)=\EE_{x} \left[\tau^{\eps}_{x_-}\right]$ solves the Poisson problem:
    $$
    \begin{cases} \mathcal{L}h(x)=-1, & x>x_- \\
       h(x) =0, & x\leq x_-\end{cases}
    $$
    where $\mathcal{L}$ is the generator of the process $x^{\varepsilon}$ in \eqref{optgensde}. That is
     $$
    \begin{cases} 
       \left( \varepsilon h''(x)-F_{\varepsilon}'(x)h'(x)\right)=-1, & x>x1 \\
       h(x) =0, & x\leq x_-\end{cases}
    $$
    This problem is solved explicitly and for $x=x_+$ takes the form:
	$$
	\EE_{x_+} \left[\tau^{\eps}_{x_-} \right]= \frac{1}{\eps} \int \limits_{x_-}^{x_+} \int \limits_{z}^{\infty} \exp \left(\frac{F_{\eps}(z)-F_{\eps}(y)}{\eps}\right) dy dz
	$$
	
	We denote by $D$ the area of integration. Notice that in $D$, the function $(z,y)\mapsto F_0(z)-F_0(y)$ attains its maximum at $(x_*,x_+)$ and for $A>x_+$, let us consider the compact set $D_1=D\cap \left((-\infty, A]\times \mathbb{R}\right)$ which contains the point $(x_*,x_+)$ and the unbounded set $D_2=D\setminus D_1.$ Moreover, denote by $I_1(\eps),I_2 (\eps)$ the integral of $\exp \left(\frac{F_{\eps}(z)-F_{\eps}(y)}{\eps}\right)$ over $D_1$ and $D_2$, respectively, to get
	\begin{equation}\label{I1I2}
		\iint_{D} e^{\frac{1}{\eps}\left(F_{\eps}(z)-F_{\eps}(y)\right) }dy dz = I_1(\eps)+I_2(\eps)
	\end{equation}
	Since $D_1$ is compact and $F_{\eps}$ converges uniformly on compact sets to $F_0,$ there exists a non-negative function $\lambda (\eps)$, with $\lim \limits_{\eps \rightarrow 0} \lambda (\eps)=0,$ such that $$|(F_{\eps}(z)-F_{\eps}(y))-(F_{0}(z)-F_{0}(y)) |\leq \lambda(\eps) \; \forall (y,z)\in D_1.$$ Therefore,
		$$\displaystyle{e^{-\frac{\lambda(\eps)}{\eps}}
		\iint_{D_1} e^{\frac{1}{\eps}\left(F_{0}(z)-F_{0}(y)\right) }dy dz \leq I_1(\eps) \leq \iint_{D_1} e^{\frac{1}{\eps}\left(F_{0}(z)-F_{0}(y)\right) }dy dz e^{\frac{\lambda(\eps)}{\eps}}}
		$$
		Which gives from standard Laplace asymptotics that, 
		\begin{equation}\label{I1s}
			\lim \limits_{\eps \rightarrow 0} \eps \log I_1 (\eps) = F_{0}(x_*)-F_{0}(x_+)
		\end{equation}
		 Since $F_{\eps}$ converges uniformly to $F_0$ on $[x_-, x_+]$, there exists $\eps_1>0$ such that $F_{\eps}(z)<F_0(z)+1\leq F_0(x_*)+1, \; \forall z\in [x_-,x_+], \; \forall \eps <\eps_0.$ Thus,
		$$I_2(\eps)=\int \limits_{x_-}^{x_+} \int \limits_{A}^{\infty}e^{\frac{1}{\eps}\left(F_{\eps}(z)-F_{\eps}(y)\right) }dy dz \leq  e^{\frac{1}{\eps} (F_0(x_*)+1)}(x_+-x_-) \int \limits_{A}^{\infty}e^{-\frac{1}{\eps}(ay^q+b) }dy $$
  which based on bounds of the upper incomplete Gamma function (see \cite{Pin}), gives
		
		\begin{equation}\label{I2s}
			\limsup \limits_{\sigma \rightarrow 0}\eps \log I_2(\eps) \leq  F_0(x_*)+1 -b -aA^q
		\end{equation}
		By choosing $A$ sufficiently large (such that $1-b-aA^q<-F_0(x_+)$), from relations \eqref{I1I2}, \eqref{I1s}, \eqref{I2s} we find that
		$$
		\lim \limits_{\eps \rightarrow 0} \eps\log   \iint_{D} \exp \left(\frac{F_{\eps}(z)-F_{\eps}(y)}{\eps}\right) dy dz = F_0(x_*)-F_0(x_+)
		$$
		which concludes the proof.  \hfill$\Box$\

\subsection{Proof of Lemma \ref{asmlemma}}

Since the optimal control exists, it should satisfy the following relation:
\[
a(x)V_0'(x)+G_u(x,u)=0 
\]
which gives $u^*(x)=h_x(-a(x)V_0'(x)).$ By contradiction, let us assume that 
	\begin{equation}\label{sbllimgen}
			f(x_0,u^*(x_0(-)))=a(x_0)h_{x_0}\left(-a(x_0)V_0'(x_0(-))\right)+b(x_0)=0,	\end{equation}
   and without loss of generality we assume that $V_0'(x_0(-))<V_0'(x_0(+)).$
		Taking $x\rightarrow x_0(-)$ in HJB, by \eqref{sbllimgen}, we find:
		\begin{equation}\label{sblhjbgen}
		\rho V(x_0)=G\left(x_0,h_{x_0}(-a(x_0)V_0'(x_0(-)))\right)
		\end{equation}
		Taking now $x\rightarrow x_0(+)$ in HJB and substituting \eqref{sblhjbgen} and \eqref{sbllimgen}, we find:
		\begin{multline*}
	G\left(x_0,h_{x_0}(-a(x_0)V_0'(x_0(-)))\right)-G\left(x_0,h_{x_0}(-a(x_0)V_0'(x_0(+)))\right)= \\-a(x_0)\Big(h_{x_0}(-a(x_0)V_0'(x_0(-)))-h_{x_0}(-a(x_0)V_0'(x_0(+)))  \Big)V'(x_0(+))
		\end{multline*}
		which by Mean Value Theorem and the monotonicity of $h_{x_0}$, gives
  \begin{equation}\label{mvt}
  G_u(x_0,h_{x_0}(-a(x_0)y)=-a(x_0)V'(x_0(+)))
\end{equation}	
 for some $y \in (V'(x_0(-)), V'(x_0(+)))$, 
		which is a contradiction, since relation \eqref{mvt} implies that $y=V'(x_0(+))$. \hfill$\Box$\

\subsection{Application to the shallow lake problem}\label{application}

The metastable behaviour of shallow lakes is naturally observed and in mathematical terms corresponds to a system with two local equilibrium points and a Skiba point. In the presence of noise, the system moves from the oligotrophic state to the eutrophic state and vice versa see Figure \ref{troxies}. Furthermore, the markovian nature of the optimal control leads to a system with a noise-dependent drift function. Therefore, the shallow lake problem is offered as a suitable application of our result, Theorem \ref{arrh}. In Figure \ref{potential}, the double-well potential of the deterministic shallow lake problem is depicted, indicating the height of the barrier that the process has to overcome in order to get to the first well. This is identified with the constant on the right term of the Arrhenius' law.

If we apply the transformation $y(t)=\log x(t)$ to the process $x(t)$ of the stochastic shallow lake problem \eqref{sldyn}, we find by  It\^o's rule: 
\begin{equation}\label{con_dif}
 \begin{cases}
		dy(t)=\left(e^{-y(t)}u(t) -b +r\left(e^{y(t)}\right)e^{-y(t)}  -\frac{\sigma^2}{2}\right)dt+\sigma dW_t \\
        y(0)=y_0
\end{cases}
	\end{equation}	
\noindent
Therefore, the dynamics of the shallow lake problem is described in terms of \eqref{gensde}.  We can now consider the value function of the shallow lake problem in terms of the process $y(t)$, i.e.

$$
\tilde{V}(y)=\sup \limits_{u\in \mathfrak{U}} \mathbb{E}_{y} \left[ \int \limits_0^{\infty} e^{-\rho t} \left( \ln u(t) -c e^{2y(t)}\right)dt \right]=V(e^{y})
$$
\noindent
Obviously, it follows from Proposition \ref{constrained_vs} that $\tilde{V}$ is a classical solution in $\mathbb{R}$ of equation
\begin{equation}\label{OHJBlog}
    -\frac{1}{2}\sigma^2 \tilde{V}''-\tilde{H}(x,\tilde{V}')+\rho \tilde{V}=0
\end{equation}
 where $\tilde{H}(x,p)=\left(r(e^x)e^{-x}-b-\frac{\sigma^2}{2}\right)p -\ln(-p)+x-1-ce^{2x}$ and the optimally controlled system \eqref{con_dif} takes the form:
 \begin{equation}\label{optsde}
      \begin{cases}
		dy^{\sigma}(t)=-F'_{\sigma}(y^{\sigma}(t))dt+\sigma dW_t \\
        y^{\sigma}(0)=y_0
\end{cases}
 \end{equation}
where $$F'_{\sigma}(y)=\frac{1}{\tilde{V}'_{\sigma}(y)} +b -r\left(e^{y}\right)e^{-y}  +\frac{\sigma^2}{2}$$
\noindent
Obviously, our verification argument for the deterministic shallow lake problem implies that $F_0$ is $C^1\left( \mathbb{R}\setminus \{x_0\} \right)$. The next two Lemmata show that the shallow lake problem, under the assumption of two saddle equilibrium points and one Skiba point, satisfies the hypothesis of Theorem \ref{arrh}. Therefore, the Arrhenius law is stable under our model.

\begin{figure}
				\centering 
				\begin{subfigure}{0.5\textwidth}
					\includegraphics[width=1\textwidth]{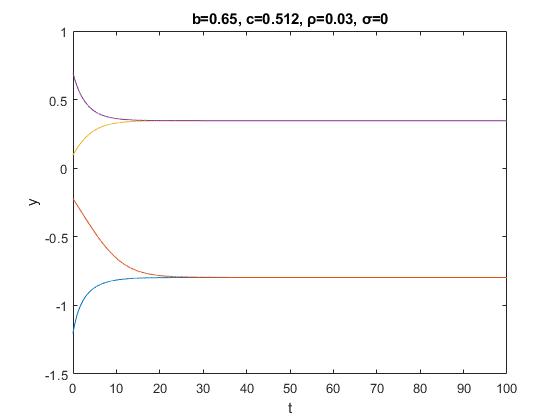}
					\caption{}
					\label{troxies0}
				\end{subfigure}\hfil
				\begin{subfigure}{0.5\textwidth}
					\includegraphics[width=1\textwidth]{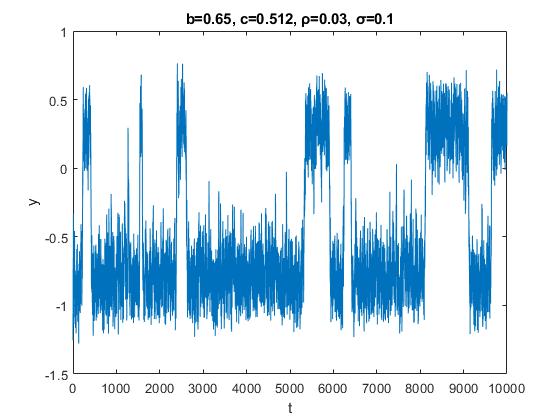}
					\caption{}
					\label{troxies01}
				\end{subfigure}
				\vspace{-1\baselineskip}
				\caption{Left: The paths of the optimally controlled lake (deterministic case) for different initial positions. Right: One simulated path of the optimally controlled lake (stochastic case) with two stochastic attractors.}	
				\label{troxies}
\end{figure}

\begin{figure}
    \centering
    \includegraphics[width=0.6\textwidth]{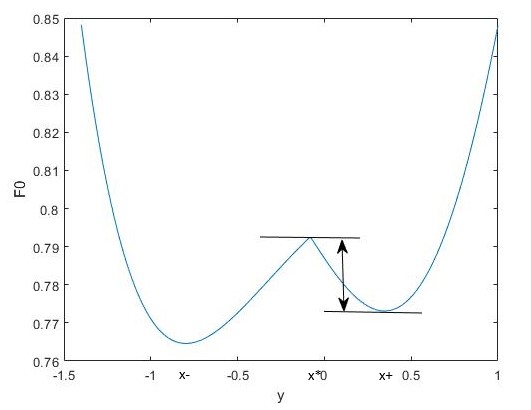}
    \caption{The potential $F_0$ of the deterministic optimally controlled shallow lake when $(b,c,\rho,\sigma)=(0.65,0.512,0.03,0)$}
    \label{potential}
\end{figure}

\begin{lemma}\label{CoOfDer}
	We assume that the recycling rate function $r$, in addition to Assumption \ref{r},  satisfies also that $r(x) <bx$ $\forall \; x>0.$ Let $\Omega \subset \mathbb{R}$ compact and $\sigma_0<\sqrt{\rho}$. Then there exists $C=C(\Omega, \sigma_0)>0$ such that $ |\tilde{V}'_{\sigma}(x)|\leq C$ for all $ x\in \Omega,\;  \sigma\leq \sigma_0$.
	
\end{lemma}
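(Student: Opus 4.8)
Since $\tilde V_\sigma(y)=V_\sigma(e^y)$ gives $\tilde V_\sigma'(y)=e^yV_\sigma'(e^y)$, the assertion is equivalent to a uniform‑in‑$\sigma$ bound $|V_\sigma'|\le C(K,\sigma_0)$ on the compact set $K:=\exp(\Omega)\subset(0,\infty)$, i.e.\ to a uniform local Lipschitz bound for $V_\sigma$. By Proposition~\ref{constrained_vs}, $V_\sigma$ is $C^1$ and decreasing, so only the bound on $-V_\sigma'=|V_\sigma'|$ is at stake (positivity and a local lower bound for it are already in \cite{KL}). The plan is to prove this by a coupling estimate that deliberately avoids the optimal feedback $-1/V_\sigma'$ — using it would bring $V_\sigma''$, hence Lemma~\ref{F0}, into the argument circularly — and works instead with $h$‑optimal controls in the original variables.

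Fix $x\in K$. For small $h>0$ pick an $h$‑optimal admissible control $\hat u$ for the initial state $x$, and drive two copies of \eqref{sldyn} with that same control and a common Brownian motion, from $x$ and from $x+h$, obtaining $X_1(\cdot)<X_2(\cdot)$. Since the two admissible payoffs use the same control, the $\ln\hat u$ terms cancel and only the quadratic terms remain, so
\[
V_\sigma(x)-V_\sigma(x+h)\ \le\ h+J(x;\hat u)-J(x+h;\hat u)\ =\ h+c\,\mathbb E\!\Big[\int_0^\infty e^{-\rho t}\big(X_2^2(t)-X_1^2(t)\big)\,dt\Big].
\]
From \eqref{sldyn}, $X_2(t)-X_1(t)=hM_t$ with $M_t=\exp\!\big(\int_0^t(\rho_r(s)-b)\,ds+\sigma W_t-\tfrac12\sigma^2t\big)>0$, $M_0=1$, and $\rho_r(s)=\frac{r(X_2(s))-r(X_1(s))}{X_2(s)-X_1(s)}\in[0,\sup r']$. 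Hence $X_2^2-X_1^2=2hM_tX_1(t)+h^2M_t^2$, and after dividing by $h$ and letting $h\to0^+$ (using $V_\sigma\in C^1$),
\[
|V_\sigma'(x)|\ \le\ 1+2c\,\limsup_{h\to0^+}\mathbb E\!\Big[\int_0^\infty e^{-\rho t}M_tX_1(t)\,dt\Big].
\]

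It remains to bound the right‑hand side uniformly in $x\in K$ and $\sigma\le\sigma_0$, through two estimates, each uniform also in $h$ small. First, $\mathbb E[\int_0^\infty e^{-\rho t}X_1^2(t)\,dt]$ is bounded: from $h$‑optimality $c\,\mathbb E[\int e^{-\rho t}X_1^2dt]=\mathbb E[\int e^{-\rho t}\ln\hat u\,dt]-J(x;\hat u)$, and using $\ln\hat u\le\ln A+\hat u/A-1$ with $\mathbb E[\int e^{-\rho t}\hat u\,dt]\le(\rho+b)\,\mathbb E[\int e^{-\rho t}X_1\,dt]$ (integrate the dynamics against $e^{-\rho t}$, as in the proof of Claim~\ref{exh}) and $X_1\le1+X_1^2$, one absorbs the quadratic term for $A$ large; finally $V_\sigma(x)\ge-M_0(K)$ uniformly in $\sigma\le\sigma_0$ by testing with a small constant control and invoking $\sigma_0^2<\rho+2b$ to make that payoff's second moment integrable. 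Second, $\mathbb E[\int_0^\infty e^{-\rho t}M_t^2\,dt]$ is finite and uniformly bounded — and then $\mathbb E[\int e^{-\rho t}M_tX_1\,dt]\le\frac12\big(\mathbb E[\int e^{-\rho t}M_t^2dt]+\mathbb E[\int e^{-\rho t}X_1^2dt]\big)$ is too, and the remainder term $ch\mathbb E[\int e^{-\rho t}M_t^2dt]$ vanishes as $h\to0$: the noise factor contributes exactly $\mathbb E[e^{2\sigma W_t-\sigma^2t}]=e^{\sigma^2t}$, while $r(z)<bz$ keeps the recycling factor $\exp(2\int_0^t(\rho_r(s)-b)ds)$ from adding further exponential growth (the drift of the coupled pair is then restoring), so the time integral converges once $\rho>\sigma_0^2$ — this is where $\sigma_0<\sqrt\rho$ enters. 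Combining, $|V_\sigma'(x)|\le C(K,\sigma_0)$ on $K$, which is the claim.

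The step I expect to be the main obstacle is this last moment estimate: balancing the exponential factor $e^{\sigma^2t}$ produced by the multiplicative noise, and the exponential factor coming from the recycling term along the coupled pair, against the discount $e^{-\rho t}$ is precisely the delicate point, and it is here that both hypotheses $r(z)<bz$ and $\sigma_0<\sqrt\rho$ are consumed; a related technical point is the finiteness of $J(x+h;\hat u)$ itself, since $X_2$ is not driven by an optimal control, which the same second‑moment bound settles. (Alternatively one could revisit the supersolution $\Phi_\sigma$ of \cite{KL} with $-V_\sigma'\le\Phi_\sigma$ and check that it can be taken $\sigma$‑uniform on compacts, but the probabilistic route above makes the uniformity more transparent.)
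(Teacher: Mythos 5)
The structure of your argument is genuinely different from the paper's: you use a same-noise coupling driven by an $h$-optimal control, while the paper picks a \emph{constant} control $u\equiv d$, small enough that the uncontrolled drift is strictly inward on $[x_1,\infty)$, runs until the hitting time $\tau_d$ of the level $x_1$, and then applies It\^o to $e^{-\rho t}x(t)$ and $e^{-\rho t}x^2(t)$ together with the dynamic programming inequality $V_\sigma(x)\ge \EE[\cdots]+\EE[e^{-\rho\tau_d}]V_\sigma(x_1)$. Your idea is sound in spirit, but there is a concrete gap in the moment estimate on which the whole coupling route rests.

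The difference process $D_t=X_2(t)-X_1(t)=hM_t$ has drift rate $\rho_r(s)-b$ with $\rho_r(s)=\dfrac{r(X_2(s))-r(X_1(s))}{X_2(s)-X_1(s)}$, and as $h\to0^+$ this secant slope tends to $r'(X_1(s))$, so the only a priori bound is $\rho_r\le\|r'\|_\infty$. The hypothesis $r(z)<bz$ controls the chord slope through the origin, $r(z)/z<b$; it does \emph{not} control $r'$, and for a sigmoid $\sup_z r'(z)>\sup_z r(z)/z$ is the generic situation. For the benchmark $r(x)=x^2/(x^2+1)$ one has $\sup r(x)/x=1/2$ while $\|r'\|_\infty=3\sqrt3/8\approx0.65$; so e.g.\ $b=0.55$, $\rho=0.03$ satisfies both $r(x)<bx$ and $\sigma_0<\sqrt\rho$, yet $\EE[M_t^2]\le e^{(2(\|r'\|_\infty-b)+\sigma^2)t}$ grows with exponent $\approx 0.2+\sigma^2\gg\rho$, and nothing prevents the optimal trajectories from lingering in the region where $r'$ is near its maximum. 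Consequently $\EE\!\left[\int_0^\infty e^{-\rho t}M_t^2\,dt\right]$ need not be finite, Cauchy--Schwarz cannot control $\EE\!\left[\int_0^\infty e^{-\rho t}M_tX_1(t)\,dt\right]$, and also $J(x+h;\hat u)$ is not a priori finite (your ``coupled pair is restoring'' claim is where the argument breaks). Replacing Cauchy--Schwarz by H\"older does not help: taking $p$ near $1$ still leaves the factor $e^{p(\|r'\|_\infty-b)t}$, and one would need $\rho>(\|r'\|_\infty-b)^+$, which is not assumed.

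The paper sidesteps exactly this obstruction. With the constant control $d\le m(x_1)/2$, where $m(x_1)=\inf_{x\ge x_1}(bx-r(x))>0$ (this is precisely how $r(x)<bx$ enters), the drift $d-bx+r(x)\le -m/2<0$ on $[x_1,\tau_d]$, so the terms $2x(d-bx+r(x))$ arising from It\^o on $e^{-\rho t}x^2(t)$ can simply be discarded with a sign, and the hypothesis $\sigma_0<\sqrt\rho$ is used only once, to keep $\rho-\sigma_0^2>0$ in the second-moment inequality \eqref{x2b}; no growth estimate on a pathwise difference is ever needed. If you want to keep a coupling-style proof you would need an additional hypothesis of the type $\|r'\|_\infty<b$ (or a localisation of the coupling that your current argument does not provide); as written the step ``$r(z)<bz$ makes the coupled drift restoring'' is false and the estimate on $I:=\EE[\int e^{-\rho t}M_tX_1\,dt]$ does not close.
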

\begin{proof}
Since $\tilde{V}'_{\sigma}(x)=V'_{\sigma}(e^{x})e^{x}$, Proposition 3(ii) \cite{KL} implies that there exists $C>0$ (independent of $\sigma$) such that $\tilde{V}'(x)\leq -C<0$ for all $x\in \Omega.$ Therefore, it suffices to show that there exists a continuous function $\Phi: \; (0,\infty)\rightarrow (0,\infty) $ independent of $\sigma$ such that $V'_{\sigma}(x)\geq -\Phi(x)$ for all $x>0.$ \\
\noindent
 Let $0<x_1<x$ and for $d>0$ consider a control $u\in \mathfrak{U}$ which equals $d$ up to time $\tau_d=\inf \{t\geq 0: \; x(t)\leq x_1 \}$ where $x(\cdot)$ is the solution of \eqref{sldyn} with control $u$ and $x(0)=x.$ Then
\begin{align*}
		V_{\sigma}(x)&\geq \EE \left[ \int_{0}^{\tau_d} e^{-\rho t}(\ln u(t)-cx^2(t))dt \right]+\EE \left[ e^{-\rho \tau_d} \right] V_{\sigma}(x_1)\\
  &= \ln d\cdot \EE  \left[ \int_{0}^{\tau_d} e^{-\rho t}dt  \right] - c \EE  \left[ \int_{0}^{\tau_d} e^{-\rho t} x^2(t)dt  \right] +\EE \left[ e^{-\rho \tau_d} \right] V_{\sigma}(x_1) 
\end{align*}
Based on Propositions 2(iii) and 3(ii) in \cite{KL}, $V_{\sigma}(x)\leq \frac{1}{\rho} \ln \left( \frac{b+\rho}{\sqrt{2 e c}} \right)=:D $ for all $\sigma \leq \sigma_0,$ $x\geq 0$. Thus, we find:
\begin{equation}\label{bVx}
			V_{\sigma}(x)-V_{\sigma}(x_1) \geq (\ln d-\rho D ) \EE \left[ \int_{0}^{\tau_d} e^{-\rho t}dt \right]- \EE \left[ \int_{0}^{\tau_d} e^{-\rho t}c x^2(t)dt \right]
\end{equation}
Applying It$\hat{o}$'s rule to the semimartingale $Y_t=e^{-\rho t}x^2(t)$, then the optional stopping theorem for the bounded stopping time  $\tau_N=\tau_c \wedge \inf\{t\geq 0:\; x(t) \geq N\}\wedge N$, and letting $N\rightarrow \infty$, we find
$$x_1^2\EE \left[ e^{-\rho \tau_d} \right] -x^2 \leq \EE \Bigg[ \int \limits_0^{\tau_d} e^{-\rho t} \big( -\rho  x^2(t)+2x(t)(d-bx(t)+r(x(t)))+\sigma^2 x^2(t) \big)dt \Bigg]$$   
\noindent
If $m=m(x_1)=-\sup \limits_{x\geq x_1}\{ -bx+r(x) \}$, then $m>0.$ \\Choosing $d\leq m/2$, we have
\begin{equation}\label{x2b}
   x_1^2 -x^2 \leq -(\rho -\sigma_0^2) \EE \left[ \int \limits_0^{\tau_d} e^{-\rho t} x^2(t) dt\right] +\rho x_1^2 \EE \left[ \int \limits_0^{\tau_d} e^{-\rho t} dt\right]
\end{equation}
Combining \eqref{bVx} and \eqref{x2b}, we find
\begin{equation}
    V_{\sigma}(x)-V_{\sigma}(x_1)\geq \left( \ln d -\rho D -\frac{ c\rho x_1^2}{\rho -\sigma_0^2}  \right)  \EE \left[ \int \limits_0^{\tau_d} e^{-\rho t} dt\right] +\frac{c}{\rho-\sigma_0^2} (x_1^2-x^2)
\end{equation}
In order to control the term $\displaystyle{\EE \left[ \int \limits_0^{\tau_d} e^{-\rho t} dt\right]}$, we now apply It$\hat{o}$'s rule to the semimartingale $Z_t=e^{-\rho t}x(t)$, following the same steps as above, to find:
\begin{align*}
x_1\EE \left[e^{-\rho \tau_d}  \right]-x&\leq \EE \left[ \int \limits_0^{\tau_d} e^{-\rho t} \left( -\rho x(t) + d-bx(t) +r(x(t))\right)dt \right]\\
&\leq x_1\EE \left[e^{-\rho \tau_d}  \right]-x_1 -\frac{m}{2}  \EE \left[ \int \limits_0^{\tau_d} e^{-\rho t} dt\right]
\end{align*}
Thus
\begin{equation}\label{btau}
    x_1-x\leq -\frac{m}{2}  \EE \left[ \int \limits_0^{\tau_d} e^{-\rho t} dt\right]
\end{equation}
If we finally choose $d=d(x_1)=\min \left[ \exp \left( \rho D + \frac{c\rho x_1^2}{\rho-\sigma_0^2}\right) , m(x_1) \right]/2$, we find
\begin{equation}
    V_{\sigma}(x)-V_{\sigma}(x_1)\geq \frac{2}{m(x_1)}\left( \ln d(x_1) -\rho D -\frac{c\rho x_1^2}{\rho -\sigma_0^2}  \right) (x-x_1) - \frac{c}{\rho -\sigma_0^2}2x (x-x_1) 
\end{equation}
Dividing by $x-x_1$ and taking the limit $x\rightarrow x_1$, we conclude that
$$
V'_{\sigma}(x) \geq \frac{2}{m(x)}\left( \ln d(x) -\rho D -\frac{c\rho x^2}{\rho -\sigma_0^2}  \right)- \frac{2cx}{\rho -\sigma_0^2} =: \Phi(x)
$$
Therefore, $\tilde{V}'_{\sigma}(x)\geq \Phi(e^x)e^{x}$ for all $x\in \mathbb{R}.$ 

\end{proof}

\begin{lemma} $F_{\sigma}(x)\geq bx+C$ for all $x>1$, where $C$ constant independent of $\sigma.$
\end{lemma}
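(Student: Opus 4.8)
The plan is to integrate the defining identity
$F'_\sigma(y)=\dfrac{1}{\tilde V'_\sigma(y)}+b-r(e^y)e^{-y}+\dfrac{\sigma^2}{2}$
from $1$ to $x$ and bound each of the four resulting contributions to $F_\sigma(x)-F_\sigma(1)$ from below, uniformly in $\sigma\le\sigma_0$. Three of them are immediate: the constant $b$ contributes exactly $b(x-1)$; the term $\tfrac{\sigma^2}{2}$ contributes $\tfrac{\sigma^2}{2}(x-1)\ge 0$ for $x>1$; and since $r\le a$ by Assumption \ref{r}(iii), we have $-\int_1^x r(e^s)e^{-s}\,ds\ge -a\int_1^\infty e^{-s}\,ds=-a/e$. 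The only nontrivial term is $\int_1^x\tfrac{ds}{\tilde V'_\sigma(s)}$, which is negative and must be controlled from below uniformly in $\sigma$.

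For this I would first establish the \emph{uniform linear lower bound} $-V'_\sigma(z)\ge \dfrac{c\,z}{\rho+2b}$ for all $z>0$ and all $\sigma\le\sigma_0$. Granting it and using $\tilde V'_\sigma(y)=V'_\sigma(e^y)e^y$, one gets $\dfrac{1}{\tilde V'_\sigma(y)}\ge-\dfrac{\rho+2b}{c}\,e^{-2y}$, hence $\int_1^x\tfrac{ds}{\tilde V'_\sigma(s)}\ge-\dfrac{\rho+2b}{c}\int_1^\infty e^{-2s}\,ds=-\dfrac{\rho+2b}{2ce^2}$. Adding the four bounds gives $F_\sigma(x)\ge bx+C$ for $x>1$ with $C=F_\sigma(1)-b-a/e-\tfrac{\rho+2b}{2ce^2}$, which is independent of $\sigma$ (the additive constant $F_\sigma(1)$ is harmless, since $F_\sigma$ enters Theorem \ref{arrh} only through differences and may be normalised away).

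The heart of the argument — and the step I expect to be the main obstacle — is the linear lower bound on $-V'_\sigma$, which I would obtain by a synchronous coupling combined with dynamic programming. Fix $0<z_1<z$, let $u^*$ be an optimal control for the initial state $z$ (it exists by Theorem \ref{ver}) and $X^*(\cdot)$ the corresponding optimal trajectory of \eqref{sldyn}; driving \eqref{sldyn} from $z_1$ with the \emph{same} control and the \emph{same} Brownian motion produces a trajectory $\tilde X(\cdot)$, admissible for $z_1$, with $0\le\tilde X(t)\le X^*(t)$ by the one-dimensional comparison theorem (valid since $\zeta\mapsto\sigma\zeta$ is Lipschitz). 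The $\ln u^*$ contributions to the payoff cancel, so
\[
V_\sigma(z)-V_\sigma(z_1)\ \le\ J_\sigma(z;u^*)-J_\sigma(z_1;u^*)\ =\ -c\,\EE\!\left[\int_0^\infty e^{-\rho t}\big(X^*(t)^2-\tilde X(t)^2\big)\,dt\right].
\]
Writing $X^*(t)^2-\tilde X(t)^2=\big(X^*(t)-\tilde X(t)\big)\big(X^*(t)+\tilde X(t)\big)\ge\big(X^*(t)-\tilde X(t)\big)\tilde X(t)$, and using $r\ge 0$, $r$ nondecreasing and $u^*\ge 0$ together with comparison, one bounds $X^*(t)-\tilde X(t)\ge(z-z_1)M_t$ and $\tilde X(t)\ge z_1 M_t$, where $M_t=\exp\big((-b-\tfrac{\sigma^2}{2})t+\sigma W_t\big)$ satisfies $\EE[M_t^2]=e^{(\sigma^2-2b)t}$; since $\sigma^2\le\sigma_0^2<\rho\le\rho+2b$ the integral $\int_0^\infty e^{-\rho t}\EE[M_t^2]\,dt$ converges, giving $V_\sigma(z)-V_\sigma(z_1)\le-\dfrac{c\,z_1(z-z_1)}{\rho+2b-\sigma^2}$. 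Dividing by $z-z_1$, letting $z\downarrow z_1$ and invoking $V_\sigma\in C^1$ (Proposition \ref{constrained_vs}) yields $V'_\sigma(z_1)\le-\dfrac{c\,z_1}{\rho+2b-\sigma^2}\le-\dfrac{c\,z_1}{\rho+2b}$. The points requiring care are the admissibility of $u^*$ for $z_1$ and the finiteness ($>-\infty$) of $J_\sigma(z_1;u^*)$ — both follow from $\tilde X\le X^*$, from $\EE\big[\int_0^\infty e^{-\rho t}X^*(t)^2\,dt\big]<\infty$ (a consequence of $V_\sigma(z)\in\RR$ and $u^*$ bounded above, the bound on $u^*$ itself following from $-V'_\sigma$ being bounded away from $0$ on compacts by Proposition 3(ii) in \cite{KL}) — and the precise justification of the SDE comparisons for $X^*-\tilde X$ and for $\tilde X$ against geometric Brownian motion.
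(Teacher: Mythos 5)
Your proof is correct, and its structure — integrate the defining identity for $F'_\sigma$ from $1$ to $x$ and bound each contribution — matches the paper's. The genuine difference lies in how the one nontrivial term $\int_1^x \bigl(1/\tilde V'_\sigma(s)\bigr)\,ds$ is controlled. The paper disposes of it in a single line by invoking, as a known fact imported from \cite{KL}, that $V'_\sigma(z)\le -C_1<0$ with $C_1$ independent of $\sigma$ for the relevant arguments $z\ge e$; then $1/V'_\sigma(e^y)\ge -1/C_1$, and together with $r\le a$ the integral is bounded by a constant. You instead re-derive a sharper, \emph{linear} version of this bound from scratch via synchronous coupling: running \eqref{sldyn} from $z_1<z$ under the same optimal open-loop control $u^*$ and the same Brownian path, and using the SDE comparison estimates $X^*-\tilde X\ge(z-z_1)M_t$ and $\tilde X\ge z_1 M_t$ with $\EE[M_t^2]=e^{(\sigma^2-2b)t}$, to obtain $V'_\sigma(z_1)\le -c z_1/(\rho+2b-\sigma^2)$. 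That argument is sound, and the admissibility and finiteness issues you flag are indeed resolved by $\tilde X\le X^*$ and the upper bound on $u^*$. The trade-off: the paper's proof is terse because it leans on estimates already established in \cite{KL}, while yours is self-contained and in fact proves strictly more than the lemma requires (a constant negative upper bound on $V'_\sigma$ on $[e,\infty)$ suffices). Your remark about the additive normalisation of $F_\sigma$, silently absorbed in the paper into ``$Const$'', is a fair catch; it is harmless because $F_\sigma\to F_0$ locally uniformly (Lemma \ref{F0}(ii)), so $F_\sigma(1)$ stays bounded as $\sigma\to 0$ under any normalisation compatible with that convergence.
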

\begin{proof}
		
		
		

	Let $x>1$.	Since $V'_{\sigma}(x)\leq -C<0$, where $C$ constant independent of $\sigma$ and $r(x)\leq a$ for all $x>0,$
		$$
		F_{\sigma}(x)=\int \limits_{1}^{x} \left( \frac{1}{ V'_{\sigma}(y)}-r(e^y) \right)e^{-y} dy+(b+\frac{\sigma^2}{2})x  > bx+Const
		$$

\end{proof}

\section{Further estimates}\label{further}

We begin by proving a result on the asymptotic behaviour of the derivative of the value function at $+\infty.$  We should highlight that this result is true for the value function of  both the stochastic and the deterministic shallow lake problem. 

\begin{propos}\label{linear}
For  $0\leq \sigma^2 <\rho +2b,$ there exist constants $ M,B>0$, $C\in \mathbb{R}$ such that $$V'(x)\geq -Bx+C \; \; \forall x>M.$$
\end{propos}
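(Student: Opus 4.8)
\emph{Proof proposal.} I would follow the route used for Lemma~\ref{CoOfDer}: test $V$ against a cheap suboptimal policy that pushes the state downward, but now keep track of the dependence on the base point so as to get a \emph{linear} (not merely local) bound, and replace the hypothesis $\sigma^2<\rho$ used there by the sharp $\sigma^2<\rho+2b$. Fix a large $x_1>0$. For $x>x_1$, use the control equal to the constant $1$ up to the first time $\td=\inf\{t\ge0:\ x(t)\le x_1\}$ the trajectory of \eqref{sldyn} started at $x$ reaches $x_1$, and $\eps$-optimal for the problem started at $x_1$ afterwards; since for $x_1$ large the drift $1-bx+r(x)$ is bounded away from $0$ by a negative constant while $x\ge x_1$, one gets $\td<\infty$ a.s.\ (and $\EE[\td]<\infty$), both for $\sigma>0$ and, trivially, for $\sigma=0$. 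Concatenation and $\eps\to0$ give, writing $T:=\EE\big[\int_0^{\td}e^{-\rho t}\,dt\big]$ and using $\EE[e^{-\rho\td}]=1-\rho T$ and $\ln 1=0$,
\[
V(x)-V(x_1)\ \ge\ -\rho V(x_1)\,T\ -\ c\,\EE\!\Big[\int_0^{\td}e^{-\rho t}x^2(t)\,dt\Big].
\]

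\emph{Two It\^o estimates.} Applying It\^o's formula (with the usual localisation at $\tau_N=\inf\{t:\ x(t)\ge N\}\wedge N$, then $N\to\infty$) to $e^{-\rho t}x(t)$ and to $e^{-\rho t}x^2(t)$ and optional stopping at $\td$, exactly as in the proof of Lemma~\ref{CoOfDer}, I would get: first, $T\le 2(x-x_1)/m(x_1)$ with $m(x_1)=bx_1-r(x_1)$, using only $x(t)\ge x_1$ on $[0,\td]$ and $1\le m(x_1)/2$; and second, since the drift of $e^{-\rho t}x^2(t)$ equals $e^{-\rho t}\big((\sigma^2-\rho-2b)x^2+2x+2xr(x)\big)\le e^{-\rho t}\big(-\mu x^2+2(1+a)x\big)\le-\tfrac{\mu}{2}\,e^{-\rho t}x^2$ for $x\ge x_1$, with $\mu:=\rho+2b-\sigma^2>0$ and $x_1$ large,
\[
\EE\!\Big[\int_0^{\td}e^{-\rho t}x^2(t)\,dt\Big]\ \le\ \frac{2}{\mu}\Big(x^2-x_1^2+\rho x_1^2\,T\Big).
\]
The sole genuine change from Lemma~\ref{CoOfDer} is this second estimate: there the term $-2bx^2$ in the drift was simply discarded (forcing $\sigma^2<\rho$), whereas keeping it is exactly what makes everything run for all $\sigma^2<\rho+2b$.

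\emph{Conclusion.} Substituting both estimates, I handle the coefficient of $T$ by a case distinction on its sign: where it is nonnegative the corresponding term is discarded; where it is negative I bound $-\rho V(x_1)$ from below by $-\rho D$, with $D=\tfrac1\rho\ln\tfrac{b+\rho}{\sqrt{2ec}}$ the constant upper bound for $V$ from Propositions~2(iii),3(ii) of \cite{KL}, and use $T\le 2(x-x_1)/m(x_1)$. In both cases, for $x_1$ large enough that $-\rho D-\tfrac{2c\rho x_1^2}{\mu}\le0$, this yields $V(x)-V(x_1)\ \ge\ \big(-\rho D-\tfrac{2c\rho x_1^{2}}{\mu}\big)\tfrac{2(x-x_1)}{m(x_1)}-\tfrac{2c}{\mu}(x^2-x_1^2)$ for $x>x_1$. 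Dividing by $x-x_1$ and letting $x\downarrow x_1$ (for $x_1$ past the eutrophic equilibrium and the Skiba point $V$ is $C^1$ near $x_1$, so this limit is $V'(x_1)$) gives $V'(x_1)\ge \tfrac{2}{m(x_1)}\big(-\rho D-\tfrac{2c\rho x_1^2}{\mu}\big)-\tfrac{4cx_1}{\mu}$; since $m(x_1)\ge bx_1-a$, the right-hand side is $-\big(\tfrac{8c\rho}{\mu b}+\tfrac{4c}{\mu}\big)x_1+o(x_1)$ as $x_1\to\infty$, hence $\ge -Bx_1+C$ for suitable $B>0,\ C\in\RR$ once $x_1>M$; renaming $x_1$ as $x$ finishes the proof, the case $\sigma=0$ being identical with all stochastic integrals reducing to ordinary ones. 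I expect the main difficulty to be organisational rather than conceptual: pushing the quadratic It\^o estimate all the way to $\sigma^2=\rho+2b$ (by retaining the $-2bx^2$ drift), and controlling the sign of the coefficient of $T$ given that only the one-sided bound $V\le D$ is available and no sharp lower bound for $V(x_1)$ — both dealt with as above.
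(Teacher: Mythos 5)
Your proposal is correct and follows essentially the same route as the paper: test $V$ against the suboptimal policy that uses a constant control until the trajectory first hits a high level $x_1$, apply It\^o's rule and optional stopping to $e^{-\rho t}x(t)$ and $e^{-\rho t}x^2(t)$, and exploit the full damping rate $\rho+2b-\sigma^2$ in the quadratic estimate to close at the sharp threshold $\sigma^2<\rho+2b$. The main organizational difference is that you fix the constant control to be $1$ (valid once $m(x_1)/2\ge 1$) and run a simple sign dichotomy on the coefficient of $T=\EE[\int_0^{\td}e^{-\rho t}dt]$, whereas the paper keeps the control level $d$ as a free parameter, optimizes $g(d;x_2)$ in $d$, and treats the two cases (max over $d$ nonnegative vs.\ always negative) with two different bounds on $T$ — the second using the asymptote $a$ of $r$ rather than $m(x_1)$. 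Your version is shorter because fixing $d=1$ and using the constant upper bound $V\le D$ on the coefficient sidesteps the optimization over $d$; the paper's optimization buys essentially nothing for the stated conclusion, only slightly sharper constants. One cosmetic point: the paper writes the quadratic drift damping as $-c/A$ with $A$ taken from \cite{KL}; since $A=c/(\rho+2b-\sigma^2)$ there, this is literally your $\mu$, so the two estimates coincide. (Your final asymptotic coefficient $8c\rho/(\mu b)$ rather than $4c\rho/(\mu b)$ comes from using the rough bound $m(x_1)\ge bx_1/2$ — harmless for the conclusion.)
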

\begin{proof}
Let $0<x_1<x_2$ and for $d>0$ consider a control $u\in \mathfrak{U}$ which equals $d$ up to time $\tau_d=\inf \{t\geq 0: \; x(t)\leq x_1 \}$ where $x(\cdot)$ is the solution of \eqref{sldyn} with control $u$ and $x(0)=x_2.$ Then
$$
V(x_2)\geq \EE \left[ \int_{0}^{\tau_d} e^{-\rho t}(\ln u(t) -cx^2(t))dt \right]+\EE \left[ e^{-\rho \tau_d} \right] V(x_1)
$$
\begin{equation}\label{bVxx}	(V(x_2)-V(x_1)) \EE [e^{-\rho \tau_d}] \geq (\ln d-\rho V(x_2)) \EE \left[ \int_{0}^{\tau_d} e^{-\rho t}dt \right]- c\EE \left[ \int_{0}^{\tau_d} e^{-\rho t}x^2(t)dt \right]\end{equation}
Applying It$\hat{o}$'s rule to the semimartingale $Y_t=e^{-\rho t}x^2(t)$, we find
$$
Y_t-x_2^2=\int_{0}^{t} e^{-\rho s} \left[ (\sigma^2-2b-\rho)x^2(s) +2x(s)\left(u(s)+r(x(s))\right) \right]ds+M_1(t)	$$
where $M_1(t)=2\sigma\int_{0}^{t}e^{-\rho s}x^2(s)dW_s$

Applying now the optional stopping theorem for the bounded stopping time  $\tau_N=\tau_d \wedge \inf\{t\geq 0:\; x(t) \geq N\}\wedge N$, we have
$$
\EE [Y_{\tau_N}]-x_2^2= \EE \left[\int_{0}^{\tau_N} e^{-\rho s} \left( (\sigma^2-2b-\rho)x^2(s) +2x(s)\left(d+r(x(s))\right) \right)ds \right]	$$
Since $x(s) \geq x_1$ on $[0,\tau_N]$, we have
$$
x_1^2\EE [e^{-\rho \tau_N}]-x_2^2\leq -\frac{c}{A} \EE \left[\int_{0}^{\tau_N} e^{-\rho s} x^2(s)ds\right] +2(d+a) \EE \left[\int_{0}^{\tau_N} e^{-\rho s} x(s) ds \right]
$$
Letting $N\rightarrow \infty$, we get
$$
x_1^2\EE [e^{-\rho \tau_d}]-x_2^2\leq -\frac{c}{A} \EE \left[\int_{0}^{\tau_d} e^{-\rho s} x_s^2ds\right] +2(d+a) \EE \left[\int_{0}^{\tau_d} e^{-\rho s} x_s ds \right]
$$
\begin{multline*}
(x_1^2-x_2^2)\EE [e^{-\rho \tau_d}]\leq -\frac{c}{A} \EE \left[\int_{0}^{\tau_d} e^{-\rho s} x^2(s)ds\right] +2(d+a) \EE \left[\int_{0}^{\tau_d} e^{-\rho s} x(s) ds \right]\\+\rho x_2^2\EE \left[ \int_{0}^{\tau_d} e^{-\rho s}ds\right]
\end{multline*}
\begin{multline}\label{b1x2}
	-c\EE \left[\int_{0}^{\tau_d} e^{-\rho s} x^2(s)ds\right] \geq A(x_1^2-x_2^2) 	\EE [e^{-\rho \tau_d}]\\-2A(d+a) \EE \left[\int_{0}^{\tau_d} e^{-\rho s} x(s) ds \right]-A\rho x_2^2\EE \left[ \int_{0}^{\tau_d} e^{-\rho s}ds\right]
\end{multline}
Now \eqref{bVxx} gives
\begin{multline}\label{b2Vx}
		(V(x_2)-V(x_1)) \EE [e^{-\rho \tau_d}] \geq  (\ln d-\rho V(x_2)-A\rho x_2^2) \EE \left[ \int_{0}^{\tau_d} e^{-\rho s}ds \right]\\+A(x_1^2-x_2^2)\EE [e^{-\rho \tau_d}]-2A(d+a) \EE \left[ \int_{0}^{\tau_d} e^{-\rho s}x(s)ds \right]
\end{multline}
In order to control the last term in relation \eqref{b2Vx}, we apply It\^o's rule to the semimartingale $\tilde{Y}_t=e^{-\rho t}x(t)$.
$$
\tilde{Y}_t-x_2=\int_{0}^{t} e^{-\rho s} \left(u(s)- (b+\rho)x(s)+r(x(s)) \right)ds +M_2(t)	$$
where $M_2(t)=\sigma\int_{0}^{t}e^{-\rho s}x(s)dW_s$.
Applying again the optional stopping theorem for the bounded stopping time $\tau_N$, we have
$$
\EE [\tilde{Y}_{\tau_N}]-x_2=\EE \left[ \int_{0}^{\tau_N} e^{-\rho s} \left(d- (b+\rho)x(s)+r(x(s)) \right)ds \right]
$$
Since $x(s) \geq x_1$ on $[0,\tau_N]$, letting $N\rightarrow \infty$, we have
$$
x_1\EE [e^{-\rho\tau_d}]-x_2\leq \EE \left[ \int_{0}^{\tau_d} e^{-\rho s} \left(d- (b+\rho)x(s)+r(x(s)) \right)ds \right]
$$
\begin{multline}\label{drvbound}
	(x_1-x_2)\EE [e^{-\rho\tau_d}]\leq (d+\rho x_2) \EE \left[ \int_{0}^{\tau_d} e^{-\rho s}ds\right]\\ + \EE \left[ \int_{0}^{\tau_d} e^{-\rho s} \left(- (b+\rho)x(s)+r(x(s)) \right)ds \right]
\end{multline}

\begin{equation}\label{bx}
	(x_1-x_2)\EE [e^{-\rho\tau_d}]\leq (d+\rho x_2+a) \EE \left[ \int_{0}^{\tau_d} e^{-\rho s}ds\right] -(b+\rho) \EE \left[ \int_{0}^{\tau_d} e^{-\rho s} x(s)ds \right]	
\end{equation}

\begin{equation}\label{b2x}
	-\EE \left[ \int_{0}^{\tau_d} e^{-\rho s} x(s)ds \right] \geq \frac{1}{b+\rho}	(x_1-x_2)\EE [e^{-\rho\tau_d}]\\ - \frac{d+\rho x_2+a}{b+\rho} \EE \left[ \int_{0}^{\tau_d} e^{-\rho s}ds\right]	
\end{equation}

Relation \eqref{b2Vx} based on \eqref{b2x} becomes:
\begin{multline}\label{b3Vx}
		(V(x_2)-V(x_1)) \EE [e^{-\rho \tau_d}]  \geq  \\ \left(\ln d-\rho V(x_2)-A\rho x_2^2- \frac{2A(d+a)(d+\rho x_2+a)}{b+\rho} \right)  \EE \left[ \int_{0}^{\tau_d} e^{-\rho t}dt \right]\\+A(x_1^2-x_2^2)\EE [e^{-\rho \tau_d}] 
		+\frac{2A(d+a)}{b+\rho}(x_1-x_2) \EE \left[ e^{-\rho \tau_d} \right]
\end{multline}
If we denote by $g(d;x_2)$ the coefficient of $ \displaystyle{\EE \left[ \int_{0}^{\tau_d} e^{-\rho t}dt \right]}$, we can consider the following two possible scenarios.
\begin{itemize}
	\item Case 1: If $\max \limits_{d>0}g(d;x_2)\geq 0$, we choose $d=d(x_2)=\argmax \limits_{d>0}g(d;x_2)$ and relation \eqref{b3Vx} gives:
	\begin{align*}
	\frac{V(x_2)-V(x_1)}{x_2-x_1}&\geq -A(x_1+x_2)-\frac{2A(d(x_2)+a)}{b+\rho} \\& \geq -2Ax_2 -\frac{2A(d(x_2)+a)}{b+\rho}
	\end{align*}
	Notice that $d(x_2)$ is bounded from above, since $d(x_2)$ is such that $g'(d;x_2)=0$, which gives that
	\begin{equation*}
	d(x_2)=\frac{-2A(\rho x_2+2a)+\sqrt{4A^2(\rho x_2+2a)^2+16A(b+\rho)}}{8A} \leq \sqrt{\frac{b+\rho}{4A}}=:d_1	    
	\end{equation*}
	Thus, 
	\begin{equation}\label{f1vx}
		\frac{V(x_2)-V(x_1)}{x_2-x_1}\geq -2Ax_2-\frac{2A(d_1+a)}{b+\rho}	
	\end{equation}
	\item Case 2: If $g(d;x_2)<0 \; \; \forall d>0$, then focusing on relation \eqref{bx} and choosing $\frac{2(\rho+a)}{b}<x_1<x_2<x_1+1$, we have that for all $x\geq x_1$:
	$$
	d+\rho x_2+a-(b+\rho)x\leq d+\rho(x_1+1)+a-(b+\rho)x_1<d-(\rho+a)
	$$
	So if we choose $d=\rho,$ relation \eqref{bx} becomes:
	\begin{equation}\label{b3x}
		(x_1-x_2)\EE [e^{-\rho\tau_d}]\leq - a\EE \left[ \int_{0}^{\tau_d} e^{-\rho s}ds \right]
	\end{equation}
	\begin{equation}\label{bvxsf}
		\eqref{b3Vx} \overset{\eqref{b3x}}{\Rightarrow} \frac{V(x_2)-V(x_1)}{x_2-x_1}\geq \frac{g(\rho ;x_2)}{a}-\frac{2A(\rho+a)}{b+\rho}-2Ax_2
	\end{equation}
	Since $V(x)+Ax^2$ is decreasing by Proposition 2.2(ii) \cite{KL}, we have:
	\begin{equation}\label{bdg}
		g(\rho;x_2)\geq \ln \rho -\rho V(0)-\frac{2A(\rho+a)^2}{b+\rho}-\frac{2A\rho(\rho+a)}{b+\rho}x_2
	\end{equation}
 Relation \eqref{bvxsf} based on \eqref{bdg} becomes
	\begin{equation}\label{bvxf}
		\frac{V(x_2)-V(x_1)}{x_2-x_1}\geq \frac{1}{a}(\ln \rho -\rho V(0))-\frac{2A(\rho+a)(\rho+2a)}{a(b+\rho)}-\left(\frac{2A\rho(\rho+a)}{a(b+\rho)}+2A\right)x_2
	\end{equation}
\end{itemize}
The assertion now follows by taking $B=\frac{2A\rho(\rho+a)}{a(b+\rho)}+2A$, $M= \frac{2(\rho \vee d_1+a)}{b}$ and $C=\min\{\frac{1}{a}(\ln \rho -\rho V(0))-\frac{2A(\rho+a)(\rho+2a)}{a(b+\rho)},-\frac{2A(d_1+a)}{b+\rho}\}$.
\end{proof}

The fact that $V'$ does not go to minus infinity more quickly than linearly along with its upper bound is enough to establish the boundedness of the second derivative of $V$ at $+\infty$ as it is stated in the following corollary.

\begin{clr}\label{secderbound}
    For the value function, $V$, of the stochastic shallow lake problem, we have
    $$
   -\infty<\liminf \limits_{x\rightarrow \infty} V''(x)\leq \limsup \limits_{x\rightarrow \infty} V''(x)<\infty
    $$
\end{clr}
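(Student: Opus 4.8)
The plan is to read $V''$ off directly from the Hamilton--Jacobi--Bellman equation and show that the resulting expression is pinched between two constant multiples of $x^{2}$ near $+\infty$. Solving \eqref{OHJB} for $V''$ (this is exactly formula \eqref{v2}) gives, for $x>0$,
\[
V''(x)=\frac{2}{\sigma^{2}x^{2}}\,Q(x),\qquad Q(x)=\rho V(x)+\bigl(bx-r(x)\bigr)V'(x)+\ln\bigl(-V'(x)\bigr)+cx^{2}+1 ,
\]
so it suffices to produce a constant $K$ and a threshold $M$ with $-Kx^{2}\le Q(x)\le Kx^{2}$ for all $x>M$; dividing by $\tfrac12\sigma^{2}x^{2}$ then yields the corollary (and, tracking constants, even the sharper $\limsup_{x\to\infty}V''(x)\le 2c/\sigma^{2}$).

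To estimate the four terms in $Q$ I would collect the following, all valid for $x$ large. By Proposition \ref{linear} we have $-Bx+C\le V'(x)$; integrating this bound from $M$ to $x$ gives $V(x)\ge -\widetilde K x^{2}$ for a suitable $\widetilde K$, and together with the fact that $V$ is bounded above (as used in the proof of Lemma \ref{CoOfDer}) this traps $\rho V(x)$ between $-\rho\widetilde K x^{2}$ and a constant. Next, since $r(x)\to a<\infty$ we have $bx-r(x)>0$ for $x$ large, and as $V'(x)<0$ this forces $(bx-r(x))V'(x)\le 0$, while $V'(x)\ge -Bx+C$ gives the matching lower bound $(bx-r(x))V'(x)\ge (bx-r(x))(-Bx+C)\ge -K'x^{2}$. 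The term $cx^{2}+1$ is $O(x^{2})$ trivially. Finally, $\ln(-V'(x))$ is controlled using $-V'(x)\le Bx-C$ (again Proposition \ref{linear}) on the upper side and, on the lower side, the fact recalled in the Introduction and established in \cite{KL} that $V'$ is negative and bounded away from zero, so that $\delta_{0}\le -V'(x)\le Bx-C$ for some $\delta_{0}>0$; hence $|\ln(-V'(x))|\le \max\{|\ln\delta_{0}|,\ \ln(Bx-C)\}=O(\ln x)=o(x^{2})$.

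Combining these estimates gives $-Kx^{2}\le Q(x)\le Kx^{2}$ for $x$ large, so $|V''(x)|\le 2K/\sigma^{2}$ there, and letting $x\to\infty$ completes the argument.

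The only step requiring any care is the lower bound on $-V'(x)$ that keeps $\ln(-V'(x))$ from diverging to $-\infty$; everything else is an immediate consequence of Proposition \ref{linear} and the boundedness of $V$. If one prefers not to invoke ``$V'$ bounded away from zero'' as a black box, one can instead use the elementary inequality $\ln y\ge 1-\tfrac1y$ to write $\ln(-V'(x))\ge 1+\tfrac{1}{V'(x)}$ and then bound $1/V'(x)$ from below via the pointwise estimate $V'(x)\le -c(x)$ of \cite{KL}; either way this term is $o(x^{2})$, which is all that is needed, so the substance of the proof is really just the substitution into \eqref{v2} followed by bookkeeping.
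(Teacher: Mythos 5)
Your proof is correct and follows essentially the same route as the paper: solve the HJB equation \eqref{OHJB} for $V''$ and bound each term using the linear lower bound $V'(x)\ge -Bx+C$ of Proposition \ref{linear}, the fact that $V'$ stays bounded away from zero, and quadratic control of $V$. The only cosmetic difference is that the paper cites the precise limit $V(x)/x^2\to -A$ from \cite{KL}, whereas you recover the needed $O(x^2)$ sandwich on $V$ directly by integrating the bound on $V'$ and invoking the upper bound on $V$ — a minor, slightly more self-contained variant of the same calculation.
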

\begin{proof}
    Since $V$ is a classical solution to \eqref{OHJB}, we have that
    \begin{equation}\label{secder}
    V''(x)=\frac{2}{\sigma^2} \left( \rho \frac{V(x)}{x^2}- (\frac{r(x)}{x}-b)\frac{V'(x)}{x}+\frac{\ln (-V'(x))}{x^2}+c+\frac{1}{x^2} \right)     
    \end{equation}
    Based on Proposition 2.3(i) \cite{KL}, $\lim \limits_{x\rightarrow \infty}\frac{V(x)}{x^2}=-A$. Furthermore, based on Proposition \ref{linear} and 2.3(ii) \cite{KL}, for $x\gg 1$, $-Bx+C\leq V'(x)\leq -C_1$. Therefore, relation \eqref{secder} gives
    $$
    \frac{2}{\sigma^2}\left(-\rho A-bB+c\right)\leq \liminf \limits_{x\rightarrow \infty}V''(x)\leq \limsup \limits_{x\rightarrow \infty}V''(x)\leq \frac{2}{\sigma^2}\left(-\rho A+c\right)
    $$
\end{proof}

We now proceed by proving properties of the noiseless value function. Proposition \ref{regsecder} refers to the regularity of $V$.

\begin{propos}\label{regsecder}
The value function, $V$, of the deterministic shallow lake problem is  $C^2((0,\infty)\setminus \{x_*\})$ where $x_*$ is the Skiba point.
\end{propos}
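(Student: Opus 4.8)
The plan is to build on the identity $V=J_P$ on $[0,\infty)$ obtained in Theorem \ref{ver}, together with the construction of $J_P$ by the method of characteristics along the stable manifolds of the saddle points of the shallow lake system \eqref{slakes}. Writing $u(\cdot)$ for the resulting optimal feedback control, which is single-valued and continuous on $(0,\infty)\setminus\{x_*\}$ (see the discussion around Figure \ref{skibapic}), we have $V'(x)=-1/u(x)$ there, and since $V$ is a classical ($C^1$) solution of \eqref{OHJB} with $\sigma=0$ away from $x_*$ (Theorem \ref{constrained_vs_det}, Proposition \ref{propofJp}), it suffices to show that $V'$, equivalently $u(\cdot)$, is continuously differentiable on $(0,\infty)\setminus\{x_*\}$. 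I would treat separately the steady states of the lake, i.e. the two saddle abscissae $x_-<x_+$, and all the remaining points.

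At a point $x\notin\{x_*,x_-,x_+\}$ I would differentiate \eqref{OHJB} (with $\sigma=0$) once in $x$ and solve for $V''$, obtaining
$$
V''(x)=\frac{-(\rho+b-r'(x))\,V'(x)-2cx}{\dfrac{1}{V'(x)}-r(x)+bx}.
$$
The denominator equals $-\big(u(x)-bx+r(x)\big)=-f(x,u(x))$, i.e. minus the velocity of the optimal trajectory issuing from $x$; it vanishes precisely when $(x,u(x))$ is an equilibrium of \eqref{slakes} lying on the optimal curve, and the only such equilibria are the two saddles (the spiral point $S$ of Figure \ref{skibapic} is not on the curve). Hence the denominator is nonzero off $\{x_*,x_-,x_+\}$, and an application of the implicit function theorem to $G(x,V(x),V'(x))=0$ with $G(x,y,p)=\rho y-(r(x)-bx)p+\ln(-p)+cx^2+1$ — whose partial derivative in $p$ is exactly this denominator — shows $V'\in C^1$, hence $V\in C^2$, near $x$, with $V''$ given by the displayed formula.

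The main obstacle is the behaviour at the two steady states $x_0\in\{x_-,x_+\}$, where numerator and denominator above both vanish, so the HJB equation alone does not determine $V''$. There I would pass to a local analysis at the hyperbolic saddle $(x_0,u_0)$ of \eqref{slakes}: its linearization has trace $\rho$ and (being a saddle) negative determinant, hence eigenvalues $\mu_-<0<\mu_+$; since $\partial_u f\equiv1$, the stable eigenvector $(v_1,v_2)$ has $v_1\neq0$, so the stable manifold $W^s$ is, near $x_0$, a graph $u=u(x)$. By the stable manifold theorem $W^s$ is $C^1$ (the vector field \eqref{slakes} being $C^1$; for the standard recycling function it is $C^\infty$), so $u$ is differentiable at $x_0$ with $u'(x_0)=v_2/v_1=\mu_-+b-r'(x_0)$, the same value on both arms of $W^s$ (equivalently, the orbits on $W^s$ approach $(x_0,u_0)$ at exponential rate $\mu_-$ tangentially to $(v_1,v_2)$). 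Therefore $V'(x)=-1/u(x)$ is differentiable at $x_0$ and $V''(x)=u'(x)/u(x)^2\to u'(x_0)/u_0^2$ as $x\to x_0^\pm$, so $V''$ extends continuously across $x_0$ and $V\in C^2$ near $x_0$. Piecing the two cases together yields $V\in C^2\big((0,\infty)\setminus\{x_*\}\big)$. The genuinely delicate step is thus the matching of the one-sided limits of $V''$ at $x_-$ and $x_+$, which relies on hyperbolicity of the saddles and transversality of the stable direction to the vertical, rather than on the equation itself.
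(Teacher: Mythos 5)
Your argument follows essentially the same route as the paper: the implicit function theorem applied to the HJB equation with $\sigma=0$ handles every point where the optimal drift $f(x,u(x))$ is nonzero, and the delicate case of the saddle abscissae $x_{\pm}$ is resolved through the linearization of \eqref{slakes}, using that the stable eigendirection is transversal to the $u$-axis (since $\partial_u f\equiv 1$). The paper phrases the saddle-point step as a Taylor expansion of $g$ and $f$ along the stable manifold to compute $\lim_{(x,u)\to(x_0,u_0)} g(u,x)/f(u,x)=\big(\partial_u g\,k+\partial_x g\big)/\lambda^*$, whereas you invoke the $C^1$ stable-manifold theorem directly to get differentiability of $u(\cdot)$ at $x_0$; the two amount to the same thing.
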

\begin{proof}
According to the analysis of subsection \ref{det_ex}, it follows that the value function $V$ is identical to the function $J_P$ constructed based on the Pontryagin Maximum Principle. Along the optimal solution of the system of the lake \eqref{slakes}, we have that $f(x,u)\neq 0$ for all $x\neq x_-,x_+, x_*$. Therefore, based on the Implicit Function Theorem and the HJB eq. \eqref{OHJB}, it follows that the value function, $V$, is $C^2\left([0,\infty)\setminus \{x_-,x_+, x_*\}\right).$ Since along the optimal trajectories $\frac{dV}{dx}=-\frac{1}{u}$ and $\frac{du}{dx}=\frac{g(u,x)}{f(u,x)}$, it remains to prove that there exists the limit $\lim \limits_{(x,u)\rightarrow (x_0,u_0)}\frac{g(u,x)}{f(u,x)}$ and it is finite, where for simplicity reasons we denote by $(x_0,u_0)$ the saddle steady states of the system \eqref{slakes}.
	The point $(x_0,u_0)$ satisfies:
	\begin{equation}\label{saddle}
		\begin{cases}
			u_0=bx_0-r(x_0)\\
			u_0=\frac{b+\rho}{2cx_0}-\frac{r'(x_0)}{2cx_0}
		\end{cases}
	\end{equation}
	
	The linear approximation around $(x_0,u_0)$ gives:	
		
	\begin{equation}\label{linearap}
		d\begin{pmatrix}
			x-x_0\\
			u-u_0
			\end{pmatrix}= \underbrace{ \begin{pmatrix}
			-b+r'(x_0) & 1\\
			2u^2_0+\frac{2\left(1-3x^2_0\right)}{\left(x^2_0+1\right)^3}u_0 & -(b+\rho)+\frac{2x_0}{\left(x^2_0+1\right)^2}+4x_0u_0
		\end{pmatrix} }_{A}
	\begin{pmatrix}
		x-x_0\\
		u-u_0
	\end{pmatrix} 
	\end{equation}	
		In order to determine the direction of the optimal trajectory close to $(x_0,u_0)$, we need to compute the eigenvector which corresponds to the negative eigenvalue of $A$.
If we denote by $\la^*<0$, the negative eigenvalue of $A$, then the corresponding eigenvector is
\begin{equation}
	v^*=\begin{pmatrix}
		1\\
		b-r'(x_0)+\la^*
	\end{pmatrix}
\end{equation}
Therefore, close to $(x_0,u_0)$ along the optimal trajectory we have that
\begin{equation}\label{dirc}
u-u_0=	\underbrace{\left(b-r'(x_0)+\la^*\right)}_{k(u_0,x_0)}(x-x_0)
\end{equation}
By a Taylor expansion along the optimal trajectory we have
\[
g(u,x)=\big[\partial_u g (u_0,x_0)k(u_0,x_0)+\partial_x g (u_0,x_0) \big] (x-x_0)+o(x-x_0), \quad \text{as } x\to x_0
\]
and
\begin{align*}
f(u,x)&=\big[\partial_u f (u_0,x_0)k(u_0,x_0)+\partial_x f (u_0,x_0) \big] (x-x_0)+o(x-x_0)\\
 &=\big[k(u_0,x_0)-b+r'(x_0)\big] (x-x_0)+ o(x-x_0)=\lambda^*(x-x_0)+o(x-x_0), \text{ } x\to x_0.
\end{align*}
Hence,
\begin{align*}
\lim \limits_{(x,u)\rightarrow (x_0,u_0)}\frac{g(u,x)}{f(u,x)} &=\frac{\partial_u g (u_0,x_0)k(u_0,x_0)+\partial_x g (u_0,x_0)}{\la^*}\in\mathbb{R}.
\end{align*}

%
\end{proof}

Now, Lemma \ref{final} collects all the key properties of $V$ to establish the boundedness of its second derivative, whereat the second derivative exists.  

\begin{lemma}\label{final}
For the value function, $V$, of the deterministic shallow lake problem we have that 
\begin{enumerate}[(i)]
    \item $\lim \limits_{x\rightarrow \infty} \frac{V'(x)}{x}= -{2}{A}$
    \item $\lim \limits_{x\rightarrow \infty}V''(x) = -2A$
    \item $V''(0)=-(\rho+b-r'(0))\left(V'(0)\right)^2$
    \item the limits $\lim \limits_{x\rightarrow x_*-} V''(x)$ and $\lim \limits_{x\rightarrow x_*+} V''(x)$ exist and are finite.
\end{enumerate}
\end{lemma}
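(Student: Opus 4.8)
The four items are essentially independent; I would derive each from the (differentiated) HJB equation \eqref{OHJB} with $\sigma=0$, together with the structural facts $V=J_P$ and $V\in C^2\big((0,\infty)\setminus\{x_*\}\big)$ established in Section \ref{det_ex} and Proposition \ref{regsecder}. Since $x_*>0$, equation \eqref{OHJB} may be differentiated classically near $0$ and near $+\infty$. For (i), I would first turn \eqref{OHJB} into an ODE for $p:=-V'>0$: differentiating \eqref{OHJB} gives, for $x$ large,
$$
p'(x)=\frac{2cx-(\rho+b-r'(x))\,p(x)}{bx-r(x)-1/p(x)}.
$$
By Proposition \ref{linear} (valid for $\sigma=0$) and the bound $-V'(x)\ge c_1>0$ of \cite{KL} (which also holds for $\sigma=0$), one has $0<c_1\le p(x)\le Bx+C'$ for $x$ large; combined with $r(x)\to a$ and $r'(x)\to0$ this rewrites the identity as $p'(x)=\frac{2c}{b}-\frac{\rho+b}{b}\frac{p(x)}{x}+\eta(x)$ with $\eta(x)\to0$ as $x\to\infty$. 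Setting $A=\frac{c}{\rho+2b}$ and $w:=p-2Ax$, the algebraic identity $A(\rho+2b)=c$ cancels the constant term and leaves $w'+\frac{\rho+b}{b}\frac{w}{x}=\eta$; multiplying by the integrating factor $x^{(\rho+b)/b}$ and integrating from a large $X$ to $x$ gives $\frac{|w(x)|}{x}\le x^{-(\rho+2b)/b}\,|X^{(\rho+b)/b}w(X)|+\frac{b}{\rho+2b}\sup_{t\ge X}|\eta(t)|$, so letting $x\to\infty$ and then $X\to\infty$ yields $w(x)/x\to0$, i.e. $V'(x)/x\to-2A$. (Equivalently, once $\lim_{x\to\infty}V(x)/x^2=-A$ is available, (i) is immediate on dividing \eqref{OHJB} by $x^2$, since $\ln(-V'(x))/x^2\to0$ by the polynomial bound on $p$.)

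Item (ii) is then a one-line consequence of (i). Differentiating \eqref{OHJB} once more,
$$
V''(x)=\frac{-(\rho+b-r'(x))\,V'(x)-2cx}{bx-r(x)+1/V'(x)},
$$
and dividing numerator and denominator by $x$, using $V'(x)/x\to-2A$ (hence $V'(x)\to-\infty$, so $1/(xV'(x))\to0$), $r(x)/x\to0$ and $r'(x)\to0$: the numerator over $x$ tends to $2A(\rho+b)-2c=-2Ab$ and the denominator over $x$ to $b$, so $V''(x)\to-2A$. Item (iii) uses the same differentiated equation at the other end: $V\in C^2((0,\delta))$ for small $\delta>0$ by Proposition \ref{regsecder}, and letting $x\to0^+$ in the last display, with $r(0)=0$ so that $bx-r(x)\to0$, gives $\lim_{x\to0^+}V''(x)=-(\rho+b-r'(0))(V'(0))^2$; since $V\in C^1([0,\infty))$ and satisfies \eqref{OHJB} classically at $0$ (Theorem \ref{constrained_vs_det}), a mean value argument identifies this limit with $V''(0)$.

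For (iv), I would invoke Lemma \ref{genres}. Its hypotheses hold for the shallow lake problem: $f(x,u)=u-bx+r(x)$ is affine in $u$ (with $a(x)\equiv1$), $G(x,u)=\ln u-cx^2$ is concave in $u$, and $V_0=V$ is, by the verification of Section \ref{det_ex}, a classical solution of \eqref{OHJB} away from $x_*$, with one-sided derivatives $V'(x_*\mp)=-1/u^*(x_*\mp)$ at $x_*$ (these exist because $V=J_P$ is obtained by integrating $-1/u$ along the two smooth stable manifolds lying over $x_*$). Lemma \ref{genres} then gives $f(x_*,u^*(x_*-))\cdot f(x_*,u^*(x_*+))\ne0$. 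Finally, exactly as in the proof of Proposition \ref{regsecder}, along an optimal trajectory $V'(x)=-1/u(x)$ and $u'(x)=g(u(x),x)/f(u(x),x)$, whence
$$
V''(x)=\frac{u'(x)}{u(x)^2}=\frac{g(u(x),x)}{u(x)^2\,f(u(x),x)};
$$
letting $x\to x_*\mp$ along the oligotrophic, resp. eutrophic, branch, $u(x)\to u^*(x_*\mp)>0$, $g$ is continuous, and $f(u(x),x)\to f(x_*,u^*(x_*\mp))\ne0$, so both one-sided limits of $V''$ at $x_*$ exist and are finite.

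The main obstacle is (i): upgrading the a priori linear bounds on $V'$ to the exact limit $-2A$ through the asymptotic ODE analysis above. The other three items are comparatively routine: (ii) and (iii) drop out of (i) and the differentiated HJB equation, and (iv) is a mechanical application of Lemma \ref{genres} once one records the identity $V''=g/(u^2f)$ along optimal trajectories. In particular, if a deterministic analogue of $\lim_{x\to\infty}V(x)/x^2=-A$ is on record, then (i) and hence (ii) shorten to a single line, and only (iv) requires genuine work.
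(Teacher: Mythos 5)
Your proof is correct and follows essentially the same route as the paper's. The only (harmless) divergence is in item~(i): the paper uses exactly your parenthetical one-liner --- dividing the undifferentiated HJB equation by $x^2$, together with $\lim_{x\to\infty}V(x)/x^2=-A$ from Proposition~2.3 of \cite{KL} and the polynomial bounds on $V'$ --- so your primary ODE-asymptotics argument for $p=-V'$, while valid, is an unnecessary detour; items (ii), (iii) and (iv) match the paper's argument step for step (differentiate the HJB to obtain \eqref{secderzero}, divide by $x$ for (ii), let $x\to 0^+$ for (iii), and invoke Lemma~\ref{genres} for (iv) to conclude that the denominator, which equals $-f(x,u^*)$, does not vanish at $x_*^\pm$).
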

\begin{proof}
    \begin{enumerate}[(i)]
        \item Since $V$ is a classical solution to the HJB eq. \eqref{OHJB} for $x>x_*$, we have
        \[
        \begin{split}
        \lim \limits_{x\rightarrow \infty} \frac{V'(x)}{x}&=\lim \limits_{x\rightarrow \infty} \frac{x}{bx-r(x)}\left(-\frac{\rho V(x)}{x^2}-\frac{\ln (-V'(x))}{x^2}-c-\frac{1}{x^2} \right)\\&=\frac{1}{b}(A\rho-c)=-2A,
        \end{split}
        \]
        where in the last line we used  Proposition 2.3 \cite{KL} and  \ref{linear}.
         \item By relation \eqref{secderzero}, we find
        \[
        \begin{split}
        \lim \limits_{x\rightarrow \infty}V''(x)&=
        \lim \limits_{x\rightarrow \infty} \frac{\left(r'(x)-(\rho+b)\right) \frac{ V'(x)}{x}-2c}{-\frac{r(x)}{x}+b+ \frac{1}{xV'(x)}}\\
        &= \frac{2 A(\rho+b)-2c}{b}=-2A.
        \end{split}
        \]
        \item Differentiating once the HJB eq. \eqref{OHJB} for $x$ close to $0$, we find
        \begin{equation}\label{secderzero}
	V''(x)=\frac{\left(r'(x)-(\rho+b)\right) V'(x)-2cx}{-r(x)+bx+ \frac{1}{V'(x)}}  .         
        \end{equation}
        Taking $x\rightarrow 0$, the assertion follows since $V$ is $C^1([0,\infty)\setminus {x_*})$ and $C^2((0,\infty)\setminus {x_*})$. 
        \item  Because of \eqref{secderzero} and the fact that the side derivatives $V'_P(x_*(-))$, $ V'_P(x_*(+))$ exist (see Proposition 3.8 \cite{KosZoh}), we just need to show that the limit of the denominator of \eqref{secderzero}  as $x\rightarrow x_*(\pm)$ is not zero. This is an immediate consequence of Lemma \ref{genres}.
    \end{enumerate}
\end{proof}



\noindent
\textbf{Acknowledgements}\\
The authors wish to thank  Iasson Karafyllis for useful discussions.  \\

\noindent
The implementation of this work was co-financed by Greece and the
European Union (European Social Fund-ESF) through the Operational Programme ``Human Resources Development, Education and Lifelong Learning'' in the context of the Act ``Enhancing Human Resources Research Potential by undertaking a Doctoral Research'' Sub-action 2: IKY Scholarship Programme for PhD candidates in the Greek Universities.

\bibliographystyle{plain} 
\bibliography{reference}

\begin{thebibliography}{10}

\bibitem{Arrh}
Svante Arrhenius.
\newblock Über die reaktionsgeschwindigkeit bei der inversion von rohrzucker
  durch säuren.
\newblock {\em Zeitschrift für Physikalische Chemie}, 4U(1):226--248, 1889.

\bibitem{Bardi}
Martino Bardi and Italo Capuzzo-Dolcetta.
\newblock {\em Optimal Control and Viscosity Solutions of
  Hamilton-Jacobi-Bellman Equations}.
\newblock Birkh{\"a}user Boston, MA, 1997.

\bibitem{Bar20}
Francesco Bartaloni.
\newblock Existence of solutions to shallow lake type optimal control problems.
\newblock {\em J. Optim. Theory Appl.}, 185(2):384–415, may 2020.

\bibitem{Bar21}
Francesco Bartaloni.
\newblock Existence of the optimum in shallow lake type models with hysteresis
  effect.
\newblock {\em Journal of Optimization Theory and Applications}, 190:1--35, 08
  2021.

\bibitem{Be}
Nils Berglund.
\newblock Kramers' law: Validity, derivations and generalisations.
\newblock {\em Markov Processes and Related Fields}, 19, 06 2011.

\bibitem{Besov}
Konstantin Besov.
\newblock On balder's existence theorem for infinite-horizon optimal control
  problems.
\newblock {\em Mathematical Notes}, 103, 05 2017.

\bibitem{AH}
A.~Bovier and F.~den Hollander.
\newblock {\em Metastability: A Potential-Theoretic Approach}.
\newblock Die Grundlehren der mathematischen Wissenschaften. Springer
  International Publishing, 2015.

\bibitem{Brock}
W.~A. Brock and D.~Starrett.
\newblock Managing systems with non-convex positive feedback.
\newblock In Partha Dasgupta and Karl-G{\"o}ran M{\"a}ler, editors, {\em The
  Economics of Non-Convex Ecosystems}, pages 77--104, Dordrecht, 2004. Springer
  Netherlands.

\bibitem{carp99}
S.~R. Carpenter, D.~Ludwig, and W.~A. Brock.
\newblock Management of eutrophication for lakes subject to potentially
  irreversible change.
\newblock {\em Ecological Applications}, 9(3):751--771, 1999.

\bibitem{Crandall}
Michael~G. Crandall and Pierre-Louis Lions.
\newblock Viscosity solutions of hamilton-jacobi equations.
\newblock {\em Transactions of the American Mathematical Society},
  277(1):1--42, 1983.

\bibitem{Dmit}
Andrei Dmitruk and N.~Kuz'kina.
\newblock Existence theorem in the optimal control problem on an infinite time
  interval.
\newblock {\em Mathematical Notes}, 78:466--480, 01 2005.

\bibitem{Eyr}
Henry Eyring.
\newblock The activated complex in chemical reactions.
\newblock {\em The Journal of Chemical Physics}, 3(2):107--115, 1935.

\bibitem{FlS}
W.~H. Fleming and P.~E. Souganidis.
\newblock Asymptotic series and the method of vanishing viscosity.
\newblock {\em Indiana University Mathematics Journal}, 35(2):425--447, 1986.

\bibitem{Flem}
Wendell~H. Fleming.
\newblock Controlled diffusion processes (n. v. krylov).
\newblock {\em SIAM Review}, 25(2):287--288, 1983.

\bibitem{FS}
W.H. Fleming and H.M. Soner.
\newblock {\em Controlled Markov Processes and Viscosity Solutions}.
\newblock Stochastic Modelling and Applied Probability. Springer New York,
  2006.

\bibitem{FRWE}
M.I. Freidlin, J.~Sz{\"u}cs, and A.D. Wentzell.
\newblock {\em Random Perturbations of Dynamical Systems}.
\newblock Grundlehren der mathematischen Wissenschaften. Springer, 2012.

\bibitem{GKW}
Dieter Grass, Tatiana Kiseleva, and Florian Wagener.
\newblock Small-noise asymptotics of hamilton–jacobi–bellman equations and
  bifurcations of stochastic optimal control problems.
\newblock {\em Communications in Nonlinear Science and Numerical Simulation},
  22(1):38--54, 2015.

\bibitem{WBif}
Tatiana Kiseleva and Florian Wagener.
\newblock Bifurcations of optimal vector fields.
\newblock {\em Mathematics of Operations Research}, 40(1):24--55, 2015.

\bibitem{KosZoh}
G.~Kossioris and C.~Zohios.
\newblock The value function of the shallow lake problem as a viscosity
  solution of a hjb equation.
\newblock {\em Quarterly of Applied Mathematics}, 70(4):625--657, 2012.

\bibitem{KLS}
G.~T. Kossioris, M.~Loulakis, and P.~E. Souganidis.
\newblock The deterministic and stochastic shallow lake problem.
\newblock In Peter Friz, Wolfgang K{\"o}nig, Chiranjib Mukherjee, and Stefano
  Olla, editors, {\em Probability and Analysis in Interacting Physical
  Systems}, pages 49--74, Cham, 2019. Springer International Publishing.

\bibitem{KL}
Angeliki Koutsimpela and Michail Loulakis.
\newblock On the optimally controlled stochastic shallow lake.
\newblock {\em International Journal of Control}, 0(0):1--13, 2023.

\bibitem{Kram}
H.A. Kramers.
\newblock Brownian motion in a field of force and the diffusion model of
  chemical reactions.
\newblock {\em Physica}, 7(4):284--304, 1940.

\bibitem{Lions2}
P.~L. Lions.
\newblock Optimal control of diffusion processes and
  hamilton–jacobi–bellman equations part 2 : viscosity solutions and
  uniqueness.
\newblock {\em Communications in Partial Differential Equations},
  8(11):1229--1276, 1983.

\bibitem{Louis}
Pierre-Louis Lions.
\newblock Solutions de viscosit\'e des \'equations de {Hamilton-Jacobi} du
  premier ordre et applications.
\newblock {\em S\'eminaire \'Equations aux d\'eriv\'ees partielles
  (Polytechnique) dit aussi "S\'eminaire Goulaouic-Schwartz"}, 1983-1984.
\newblock talk:6.

\bibitem{Mao}
Xuerong Mao.
\newblock {\em Stochastic Differential Equations and Applications}.
\newblock 1997.

\bibitem{Pin}
Iosif Pinelis.
\newblock Exact lower and upper bounds on the incomplete gamma function.
\newblock {\em Mathematical Inequalities \& Applications}, 2020.

\bibitem{Balder}
Eduard Riemschneider, Olivia Bundău, Adina Juratoni, and Flavius Pater.
\newblock An existence result for the control problem associated to an
  economical growth model.
\newblock volume 2116, page 370007, 07 2019.

\bibitem{skiba}
A.~K. Skiba.
\newblock Optimal growth with a convex-concave production function.
\newblock {\em Econometrica}, 46(3):527--539, 1978.

\bibitem{W03}
F.O.O. Wagener.
\newblock Skiba points and heteroclinic bifurcations, with applications to the
  shallow lake system.
\newblock {\em Journal of Economic Dynamics and Control}, 27(9):1533--1561,
  2003.

\bibitem{Xep}
Anastasios Xepapadeas, Karl-Göran Mäler, and Aart Zeeuw.
\newblock The economics of shallow lakes.
\newblock {\em Environmental \& Resource Economics}, 26:603--624, 12 2003.

\end{thebibliography}


\end{document}